\title{Stabilized Sparse Scaling Algorithms for Entropy Regularized Transport Problems}
\author{Bernhard Schmitzer}
\date{\today}
\newcommand{\prob}{\mc{P}}
\begin{document}
\maketitle


\begin{abstract}
Scaling algorithms for entropic transport-type problems have become a very popular numerical method, encompassing Wasserstein barycenters, multi-marginal problems, gradient flows and unbalanced transport.
However, a standard implementation of the scaling algorithm has several numerical limitations: the scaling factors diverge and convergence becomes impractically slow as the entropy regularization approaches zero. Moreover, handling the dense kernel matrix becomes unfeasible for large problems.
To address this, we combine several modifications: A log-domain stabilized formulation, the well-known $\veps$-scaling heuristic, an adaptive truncation of the kernel and a coarse-to-fine scheme.
This permits the solution of larger problems with smaller regularization and negligible truncation error.
A new convergence analysis of the Sinkhorn algorithm is developed, working towards a better understanding of $\veps$-scaling.
Numerical examples illustrate efficiency and versatility of the modified algorithm.
\end{abstract}

\tableofcontents


\section{Introduction}
\label{sec:Introduction}

\subsection{Motivation and Related Work}
\label{sec:IntroductionRelatedWork}

\paragraph{Applications of Optimal Transport}
Optimal transport (OT) is a classical optimization problem dating back to the seminal work of Monge and Kantorovich (see monographs \cite{Villani-OptimalTransport-09,Santambrogio-OTAM} for introduction and historical context).
The induced Wasserstein distances lift a metric from a `base' space $(X,d)$ to probability measures over $X$. This is a powerful analytical tool, for example to study PDEs as gradient flows in Wasserstein space \cite{JKO1998,AmbrosioGradientFlows2005}.
With the increase of computational resources, OT has also become a popular numerical tool in image processing, computer vision and machine learning (e.g.~\cite{RubnerEMD-IJCV2000,SchmitzerSchnoerr-WassersteinModes2014,RumpfGeneralizedOT2014,CuturiGroundMetric2014,Fitschen-ColorTransport-JMIV2016}).

Many ideas have been presented to extend Wasserstein distances to general non-negative measures. We refer to \cite{KMV-OTFisherRao-2015,ChizatOTFR2015,LieroMielkeSavare-HellingerKantorovich-2015a,ChizatDynamicStatic2018} and references therein for some context.
A transport-type distance for general multi-channel signals is proposed in \cite{SlepcevTLP2016}.

\paragraph{Computational Optimal Transport} 
To this day, the computational effort to solve OT problems remains the principal bottleneck in many applications. In particular large problems, or even multi-marginal problems, remain challenging both in terms of runtime and memory demand.

For the linear assignment problem and discrete transport problems there are (combinatorial) algorithms based on the finite dimensional linear programming formulation by Kantorovich, such as the Hungarian method \cite{KuhnHungarianMethod}, the auction algorithm \cite{Bertsekas-ParallelAuction1988}, the network simplex \cite{NetworkFlows1993} and more \cite{GoldbergCostScaling1990}.
Typically, they work for (almost) arbitrary cost functions, but do not scale well for large, dense problems.
On the other hand, there are more geometric solvers, relying on the polar decomposition \cite{MonotoneRerrangement-91},
that tend to be more efficient. There is the famous fluid dynamic formulation by Benamou and Brenier \cite{BenamouBrenier2000},
explicit computation of the polar decomposition \cite{OptimalTransportWarping}, semi-discrete solvers \cite{MultiscaleTransport2011,LevySemiDiscrete2015}, and solvers of the Monge-Amp\`ere equation \cite{ObermanMongeAmpere2014,BenamouCollinoMirebeau_MongeAmpere2014} among many others. However, these only work on very specific cost functions, most notably the squared Euclidean distance.
In a compromise between efficiency and flexibility, several discrete coarse-to-fine solvers have been proposed that adaptively select sparse sub-problems \cite{SchmitzerSchnoerr-SSVM2013,ObermanOptimalTransportationLP2015,SchmitzerShortCuts2015}.

\paragraph{Entropy Regularization for Optimal Transport}
In \cite{YuilleInvisibleHand1994} entropy regularization of the linear assignment problem is considered to allow application of smooth optimization techniques or the Sinkhorn matrix scaling algorithm \cite{SinkhornKnopp1967}.
For sufficiently small regularization the true optimal assignment can be extracted from the approximate solution.
For increased numerical stability, the Sinkhorn algorithm is also reformulated in the log-domain.
Similarly, in \cite{Cuturi2013} the Sinkhorn algorithm is applied to solve an entropy regularized approximation of the discrete optimal transport problem.
It is demonstrated that for moderate regularization strengths the algorithm is trivial to parallelize, easy to implement on GPUs and fast. Besides, it is shown that moderate regularization can actually be beneficial for classification applications. Regularization also makes the optimization problem more well-behaved (e.g.~uniqueness of optimal coupling, optimal objective differentiable as function of marginal distributions), which led to the first practical numerical method for approximate computation of Wasserstein barycenters \cite{Cuturi14Barycenters}.
Today, this approach is widely used, for instance \cite{Solomon-siggraph-2015,RabinPapadakisSSVM2015,SlepcevTLP2016,MandadSurfaceMatching2017}.

More recently, the Sinkhorn algorithm has been extended to more general transport-type problems, such as multi-marginal problems and direct computation of Wasserstein barycenters \cite{BenamouIterativeBregman2015}, gradient flows \cite{PeyreEntropicFlows2015} and unbalanced transport problems \cite{ChizatEntropicNumeric2018}, resulting in a family of Sinkhorn-like diagonal scaling algorithms.

Convergence of the discrete regularized problem towards the unregularized limit is studied in \cite{Cominetti-ExpBarrierConvergence-1992}. In a continuous setting, this is related to the Schr\"odinger problem and the lazy gas experiment (see \cite{LeonardSchroedingerMK2012} for a review and a very general convergence proof).
\cite{Carlier-EntropyJKO-2015} provides a simpler and direct analysis for the 2-Wasserstein distance on $\R^d$ and studies the limit of entropy regularized gradient flows.

\paragraph{Convergence Speed of Sinkhorn Algorithm}

In \cite{FranklinLorenz-Scaling-1989} the convergence rate of the Sinkhorn algorithm is studied for positive kernel matrices, yielding a global linear convergence rate of the marginals in terms of Hilbert's projective metric.
However, applied to entropy regularized optimal transport, the contraction factor tends to one exponentially, as the regularization approaches zero. Thus, running the algorithm with this particular measure of convergence is often not practically feasible.
In \cite{Knight-SinkhornKnopp} the local convergence rate of the Sinkhorn algorithm near the solution is examined, based on a linearization of the iterations.
This bound is tighter and more accurately describes the behaviour of the algorithm close to convergence. But these estimates do not apply when one starts far from the optimal solution, which is the usual case for small regularization parameters.
In \cite{YuilleInvisibleHand1994} a comparison is made between the Sinkhorn algorithm and the auction algorithm.
In particular the role of the entropy regularization parameter is related to the slack parameter $\veps$ of the auction algorithm and it is pointed out that convergence of both algorithms becomes slower, as these parameters approach zero (but small parameters are required for good approximate solutions).
For the auction algorithm this can provably be remedied by $\veps$-scaling, where the $\veps$ parameter is gradually decreased during optimization.
Analogously, it is suggested to gradually decrease entropy regularization during the Sinkhorn algorithm to accelerate optimization.
Consequently, in the following we will also refer to the entropy regularization parameter as $\veps$ and to the gradual reduction scheme as $\veps$-scaling.
The ideas of \cite{YuilleInvisibleHand1994} are refined in \cite{SharifySinkhorn2013}. In particular, the latter proves convergence of a modified algorithm with `deformed iterations' where $\veps$ is gradually decreased during the iterations, similar to $\veps$-scaling.
They show that the primal iterate converges to the unregularized solution if the decrease is sufficiently slow. Unfortunately, the number of iterations to reach a given value of $\veps$ increases exponentially, as $\veps$ decreases. Thus it is ``mostly interesting from the theoretical point of view'' \cite[p.~8]{SharifySinkhorn2013}.

\paragraph{Limitations of Entropic Transport}
Despite its considerable merits, there are some fundamental constraints to the naive entropy regularization approach.
Entropy introduces some blur in the optimal assignment. While this may sometimes be beneficial (see above), in many applications it is considered a nuisance (e.g.~it quickly smears distinct features in gradient flows), and one would like to run the scaling algorithm with as little regularization as possible.
However, a standard implementation has some major numerical limitations, becoming increasingly severe as the regularization approaches zero.
The diagonal scaling factors diverge in the limit of vanishing regularization, leading to numerical overflow and instabilities.
Moreover, the algorithm requires an increasing number of iterations to converge.
In practice this can often be remedied by $\veps$-scaling, but its efficiency is not yet well understood theoretically.
Therefore, numerically this limit is difficult to reach.
In addition, naively storing the dense kernel matrix requires just as much memory as storing the full cost matrix in standard linear programming solvers and multiplications with the kernel matrix become increasingly slow.
Thus, effective heuristics to avoid storing of, and multiplication by, the dense kernel matrix have been conceived, such as efficient Gaussian convolutions or approximation by a pre-factored heat kernel \cite{Solomon-siggraph-2015}.
However, these remedies only work for particular (although relevant) problems, and do not solve the issues of blur and diverging scaling factors.

\subsection{Contribution and Outline}
\label{sec:IntroductionContribution}
In Section \ref{sec:Background} we recall the framework for transport-type problems and corresponding scaling algorithms for their entropy regularized counterparts, as put forward in \cite{ChizatEntropicNumeric2018}.
The main contributions of this article are twofold:
In Section \ref{sec:Algorithm} we propose to combine four modifications of the Sinkhorn algorithm to address issues with numerical instability, slow convergence and large kernel matrices.
In Section \ref{sec:Auction} a new convergence analysis for the Sinkhorn algorithm is derived, based on an analogy to the auction algorithm.
The two sections can be read independently from each other.
The modifications used in Section \ref{sec:Algorithm} are:
\begin{itemize}
	\item \emph{Section \ref{sec:AlgorithmLogDomain}:} A log-domain stabilization of the Sinkhorn algorithm, as described in \cite{ChizatEntropicNumeric2018}. It allows to numerically run the algorithm at small regularizations while largely retaining the simple matrix scaling structure.
	\item \emph{Section \ref{sec:AlgorithmEpsScaling}:} The well-known $\veps$-scaling heuristic, to reduce the number of required iterations.
	\item \emph{Section \ref{sec:AlgorithmSparseKernel}:} Sparsification of the kernel matrix by adaptive truncation, to reduce memory demand and accelerate iterations. We quantify the error induced by truncation and propose a truncation scheme which reliably yields small error bounds that are easy to evaluate.
		While truncation has been proposed elsewhere (e.g.~\cite{MandadSurfaceMatching2017}), to the best of our knowledge the present article gives the first concrete bounds for the inflicted error.
	\item \emph{Section \ref{sec:AlgorithmMultiScale}:} A multi-scale scheme, inspired, for instance, by \cite{SchmitzerSchnoerr-SSVM2013,SchmitzerShortCuts2015,ObermanOptimalTransportationLP2015}.
		This serves two purposes:
		First, it allows for a more efficient computation of the truncated kernel.
		Second, we propose to combine the coarse-to-fine approach with simultaneous $\veps$-scaling, which drastically reduces the number of variables during early stages of $\veps$-scaling, without losing significant precision.
\end{itemize}
We emphasize that each modification builds on the previous ones (see Remark \ref{rem:EnhancementRelation}) and only combining all four leads to an algorithm that can solve large problems with significantly less runtime, memory and regularization, as compared to the naive algorithm.
The adaptations extend to the more general scaling algorithms for transport-type problems presented in \cite{ChizatEntropicNumeric2018}.

In Section \ref{sec:Auction} we develop a new convergence analysis of the Sinkhorn algorithm, based on analogy to the auction algorithm, different from the Hilbert metric approach of \cite{FranklinLorenz-Scaling-1989}.
The structure of Section \ref{sec:Auction} is:
\begin{itemize}
	\item \emph{Section \ref{sec:AuctionRecall}:} The classical auction algorithm for the linear assignment problem is recalled.
	\item \emph{Section \ref{sec:AuctionSimple}:} A slightly modified asymmetric variant of the Sinkhorn algorithm is given and a bound is derived for the number of iterations until a prescribed accuracy is reached. As for the auction algorithm, for fixed $\veps$ the maximal number of iterations scales as $\mc{O}(1/\veps)$. This is in good agreement with numerical experiments (cf.~Section \ref{sec:NumericsEfficiency}). To avoid the difficulties with slow convergence in Hilbert's projective metric (cf.~Section \ref{sec:IntroductionRelatedWork}) we choose a weaker, but reasonable, measure of convergence (cf.~Remark \ref{rem:SinkhornAsymmetricStoppingCriterion}).
	\item \emph{Section \ref{sec:AuctionStability}:} We prove stability of optimal dual solutions of entropy regularized OT under changes of the regularization parameter. 
This also implies stability of dual solutions in the limit of vanishing regularization and therefore complements results of \cite{Cominetti-ExpBarrierConvergence-1992} (see also Remark \ref{rem:AuctionStabilityMotivation}).
	\item \emph{Section \ref{sec:AuctionEpsScaling}:} Our eventual goal is a better theoretical understanding of the $\veps$-scaling heuristic and its efficiency. We show that the above stability result is an important step and discuss missing steps for a full proof.
	To our knowledge (with the exception of \cite{SharifySinkhorn2013}, see above), these are the first theoretical results towards $\veps$-scaling for the Sinkhorn algorithm.
\end{itemize}
Numerical experiments confirm the efficiency of the modified algorithm (Section \ref{sec:NumericsEfficiency}). Examples for unbalanced optimal transport, barycenters, and Wasserstein gradient flows illustrate that the modified algorithms retain the versatility of the diagonal scaling algorithms presented in \cite{BenamouIterativeBregman2015,PeyreEntropicFlows2015,ChizatEntropicNumeric2018} (Section \ref{sec:NumericsVersatility}).

\subsection{Notation and Preliminaries}
\label{sec:IntroNotation}
We assume that the reader has a basic knowledge of convex optimization, such as convex conjugation, Fenchel--Rockafellar duality and primal-dual gaps (cf.~\cite{ConvexFunctionalAnalysis-11}).

Throughout this article, we will consider transport problems between two discrete finite spaces $X$ and $Y$.
For a discrete, finite space $Z$ (typically $X$, $Y$ or $X \times Y$) we identify functions and measures over $Z$ with vectors in $\R^{|Z|}$, which we simply denote by $\R^Z$. For $v \in \R^{Z}$, $z \in Z$ we write $v(z)$ for the component of $v$ corresponding to $z$ (subscript notation is reserved for other purposes). The standard Euclidean inner product is denoted by $\la \cdot, \cdot \ra$.
The sets of vectors with positive and strictly positive entries are denoted by $\R_+^{Z}$ and $\R_{++}^{Z}$. The probability simplex over $Z$ is denoted by $\prob(Z)$.
We write $\RExt \eqdef \R \cup \{-\infty,+\infty\}$ for the extended real line and $\RExt^Z$ for the space of vectors with possibly infinite components.

For $a$, $b \in \R^Z$ the operators $\odot$ and $\oslash$ denote pointwise multiplication and division, e.g.~$a \odot b \in \R^Z$, $(a \odot b)(z) \eqdef a(z) \cdot b(z)$ for $z \in Z$. The functions $\exp$ and $\log$ are extended to $\R^Z$ by pointwise application to all components: $\exp(a)(z) \eqdef \exp(a(z))$.
We write $a \geq b$ if $a(z) \geq b(z)$ for all $z \in Z$, $a \geq 0$ if $a(z) \geq 0$ for all $z \in Z$  (and likewise for $\leq$, $>$ and $<$).
For $a \in \R$, $a_Z$ denotes the vector in $\R^Z$ with all entries being $a$.
We write $\max a$ and $\min a$ for the maximal and minimal entry of $a$.

For $\mu \in \R^Z$ and a subset $A \subset Z$ we also use the notation $\mu(A) \eqdef \sum_{z \in A} \mu(z)$, analogous to measures.
We say $\mu \in \R^Z$ is absolutely continuous w.r.t.~$\nu \in \R_+^Z$ and write $\mu \ll \nu$ when $[\nu(z) = 0] \Rightarrow [\mu(z) = 0]$. This is the discrete special case of absolute continuity for measures. The set $\spt \mu \eqdef \{z \in Z : \mu(z) \neq 0\}$ is called support of $\mu$.
The power set of $Z$ is denoted by $2^Z$.

For a subset $\mc{C} \subset \R^Z$ the indicator function of $\mc{C}$ over $\R^Z$ is given by $\iota_{\mc{C}}(v)=0$ if $v \in \mc{C}$ and $+\infty$ else.
In particular, for $v$, $w \in \R^Z$ one finds $\iota_{\{v\}}(w) = 0$ if $v=w$ and $+\infty$ otherwise.
Moreover, we merely write $\iota_+$ for $\iota_{\R^Z_+}$.
For $v \in \R^Z$ we introduce the short notation $\iota_{\leq v} : \R^Z \to \RExt$ with $\iota_{\leq v}(w)=0$ if $w(z) \leq v(z)$ for all $z \in Z$ and $+\infty$ otherwise.

The projection matrices $\projx \in \R^{X \times (X \times Y)}$ and $\projy \in \R^{Y \times (X \times Y)}$ are given by
\begin{align*}
	\projx(x,(x',y')) & \eqdef \begin{cases}
		1 & \tn{if } x = x', \\
		0 & \tn{else.}
	\end{cases}
	&
	\projy(y,(x',y')) & \eqdef \begin{cases}
		1 & \tn{if } y = y', \\
		0 & \tn{else.}
	\end{cases}
	\intertext{They act on some $\pi \in \R^{X \times Y}$ as follows:}
	(\projx\,\pi)(x) & = \sum_{y \in Y} \pi(x,y) = \pi(\{x\} \times Y),
	&
	(\projy\,\pi)(y) & = \sum_{x \in X} \pi(x,y) = \pi(X \times \{y\}).
\end{align*}
	That is, they give the $X$ and $Y$ marginal in the sense of measures. Conversely, for some $v \in \R^X$, $w \in \R^Y$ we find $(\projxT v)(x,y) = v(x)$ and $(\projyT w)(x,y) = w(y)$.

\begin{definition}[Kullback--Leibler Divergence]
	\label{def:KullbackLeibler}
	For $\mu$, $\nu \in \R^Z$ the Kullback--Leibler divergence of $\mu$ w.r.t.~$\nu$ is given by
	\begin{align}
		\KL(\mu|\nu) & \eqdef \begin{cases}
			\sum_{\substack{z \in Z:\\\mu(z)>0}} \mu(z) \log\left(\tfrac{\mu(z)}{\nu(z)}\right)
				- \mu(Z) + \nu(Z) & \tn{if } \mu, \nu \geq 0, \mu \ll \nu\,, \\
			+ \infty & \tn{else.}
			\end{cases}
	\end{align}
	The convex conjugate w.r.t.~the first argument is given by $\KL^\ast(\alpha|\nu)  = \sum_{z \in Z} \left(\exp(\alpha(z))-1\right) \cdot \nu(z)$.
	The $\KL$ divergence plays a central role in this article and is used on various different base spaces. Sometimes, when referring to the $\KL$ divergence on a space $Z$, we will add a subscript $\KL_Z$ for clarification.
\end{definition}

\begin{definition}[KL Proximal Step]
	\label{def:KLProx}
	For a convex, lower semicontinuous function $f : \R^Z \to \RExt$ and a step size $\tau>0$ the proximal step operator for the Kullback--Leibler divergence is given by
	\begin{align}
		\ProxKL{f}{1/\tau} & : \R^Z \to \R^Z, &
		\mu \mapsto \argmin_{\nu \in \R^Z} \left( \tfrac{1}{\tau} \KL(\nu|\mu) + f(\nu) \right)\,.
	\end{align}
	A unique minimizer exists, if there is some $\nu \in \R^Z$, $\nu \ll \mu$ such that $f(\nu) \neq \pm \infty$.
	Throughout this article we shall always assume that this is the case.
\end{definition}
For Sect.~\ref{sec:Auction} we require the following Lemma.
\begin{lemma}[Softmax and Softmin]
	\label{lem:SoftMaxMin}
	For a parameter $\veps > 0$ and $a \in \R^Z$ let
	\begin{align*}
		\softmax(a,\veps) & \eqdef \veps\,\log\left(\sum_{z \in Z} \exp(a(z)/\veps) \right), &
		\softmin(a,\veps) & \eqdef -\veps\,\log\left(\sum_{z \in Z} \exp(-a(z)/\veps) \right).
	\end{align*}
	For $\veps$, $\lambda > 0$ and $a$, $b \in \R^Z$ one has the relations
	\begin{subequations}
	\label{eqn:Softbounds}
	\begin{gather}
		\label{eqn:SoftboundsMax}
		\max(a) \leq \softmax(a,\veps) \leq \max(a) + \veps \, \log |Z| , \\
		\label{eqn:SoftboundsMin}
		\min(a) - \veps\,\log |Z| \leq \softmin(a,\veps) \leq \min(a) , \\
		\min(a-b) - \lambda \, \log |Z| \leq \softmax(a,\veps) - \softmax(b,\lambda)
			\leq \max(a-b) + \veps \, \log |Z|, \\
		\min(a-b) - \veps \, \log |Z| \leq \softmin(a,\veps) - \softmin(b,\lambda)
			\leq \max(a-b) + \lambda \, \log |Z|.
	\end{gather}
	\end{subequations}
\end{lemma}
\begin{proof}
	The first line follows immediately from $0 \leq \exp(a(z)/\veps) \leq \exp(\max a/\veps)$.
	Line three then follows from $\min(a-b) \leq \max(a)-\max(b) \leq \max(a-b)$.
	The second and fourth line are implied by $\softmin(a,\veps)=-\softmax(-a,\veps)$.
\end{proof}


\section{Entropy Regularized Transport-Type Problems and Diagonal Scaling Algorithms}
\label{sec:Background}

\subsection{Transport-Type Problems}
\label{sec:BackgroundTransport}
For two probability measures $\mu \in \prob(X)$ and $\nu \in \prob(Y)$ the set $\Pi(\mu,\nu) \eqdef \{\pi \in \prob(X \times Y) \,\colon\, \projx\,\pi = \mu, \projy\,\pi=\nu\}$ is called the couplings or transport plans between $\mu$ and $\nu$. A coupling $\pi$ describes a rearrangement of the mass of $\mu$ into $\nu$, $\pi(x,y)$ can be interpreted as the mass taken from $x$ to $y$.
Let $c \in \RExt^{X \times Y}$ be a cost function, such that the cost of taking one unit of mass from $x \in X$ to $y \in Y$ is given by $c(x,y)$. The cost inflicted by a coupling $\pi$ is then given by $\la c, \pi \ra$ and the optimal transport problem between $\mu$ and $\nu$ is given by $\min \{ \la c,\pi \ra | \pi \in \Pi(\mu,\nu) \}$.
This means, we are looking for the most cost-efficient mass rearrangement between $\mu$ and $\nu$.
Note that for $\pi \in \R^{X \times Y}$ one can write $\iota_{\Pi(\mu,\nu)}(\pi) = \iota_{\{\mu\}}(\projx\,\pi) + \iota_{\{\nu\}}(\projy\,\pi) + \iota_+(\pi)$ where the first two terms represent the marginal constraints and the last term ensures that $\pi$ is non-negative. Then we can reformulate the problem as
\begin{align}
	\label{eqn:PrototypeOT}
	\min_{\pi \in \R^{X \times Y}} \iota_{\{\mu\}}(\projx\,\pi) + \iota_{\{\nu\}}(\projy\,\pi)
		+ \la c, \pi \ra + \iota_+(\pi)\,.
\end{align}

Recently it has been proposed to replace the constraints $\projx \pi=\mu$ and $\projy \pi = \nu$ by soft constraints. This allows meaningful comparison between measures of different total mass. Such formulations were studied e.g.~in \cite{LieroMielkeSavare-HellingerKantorovich-2015a} (see also \cite{ChizatEntropicNumeric2018} for more context). A particularly relevant choice for the soft constraints is the Kullback--Leibler divergence.
A corresponding `unbalanced' transport problem is given by
\begin{align}
	\label{eqn:PrototypeUnbalanced}
	\min_{\pi \in \R^{X \times Y}} \lambda \cdot \KL(\projx\,\pi|\mu)
		+ \lambda \cdot \KL(\projy\,\pi|\nu)
		+ \la c,\pi \ra
		+ \iota_+(\pi)\,.
\end{align}
where $\lambda>0$ is a weighting parameter.
Note that neither $\mu$, $\nu$ nor $\pi$ need to be probability measures in this case and each may have different total mass.

When $X=Y$ is a metric space with metric $d$, for $\lambda=1$ and the cost function $c=d^2$, the square root of the optimal value of \eqref{eqn:PrototypeUnbalanced} yields the so called Gaussian Hellinger--Kantorovich (GHK) distance on $\R_+^X$, introduced in \cite{LieroMielkeSavare-HellingerKantorovich-2015a}.
	Similarly, for the cost function
	\begin{align}
		\label{eqn:WFRCost}
		c(x,y) & \eqdef \begin{cases}
			-\log\left([\cos(d(x,y))]^2\right) & \tn{if } d(x,y) < \pi/2 \\
			+\infty & \tn{else.}
			\end{cases}
	\end{align}
	one obtains the Wasserstein--Fisher--Rao (WFR) distance (or Hellinger--Kantorovich distance), introduced independently and simultaneously in \cite{KMV-OTFisherRao-2015,ChizatOTFR2015,LieroMielkeSavare-HellingerKantorovich-2015a}.
WFR is the length distance induced by GHK \cite{LieroMielkeSavare-HellingerKantorovich-2015a}.

Problems \eqref{eqn:PrototypeOT} and \eqref{eqn:PrototypeUnbalanced} share a common structure: in both we optimize over non-negative measures $\pi$ on the product space $X \times Y$, there is a linear cost term $\la c,\pi \ra$ and two functions act on the marginals of $\pi$.
They are prototypical examples of a family of transport-type optimization problems with a common functional structure that was introduced in \cite{ChizatEntropicNumeric2018}.
The general structure is given in the following definition.
\begin{definition}[Generic Transport-Type Problem]
	\label{def:GenericFormulation}
	For two convex marginal functions $F_X : \R^X \to \RExt$, $F_Y : \R^Y \to \RExt$ and a cost function $c \in \RExt^{X \times Y}$ the primal transport-type problem is given by:
	\begin{subequations}
	\label{eqn:Generic}
	\begin{align}
		\label{eqn:GenericPrimal}
		\min_{\pi \in \R^{X \times Y}} E(\pi) & \qquad \tn{with} &
		E(\pi) & \eqdef F_X(\projx\,\pi) + F_Y(\projy\,\pi)
			+ \la c,\pi \ra + \iota_+(\pi) \\
	\intertext{The corresponding dual problem is given by:}
		\label{eqn:GenericDual}
		\max_{(\alpha,\beta) \in (\R^X,\R^Y)} J(\alpha,\beta) & \qquad \tn{with} &
		J(\alpha,\beta) & \eqdef -F_X^\ast(-\alpha) - F_Y^\ast(-\beta) - \iota_{\leq c}(\projxT\,\alpha + \projyT\,\beta)
	\end{align}
	\end{subequations}
	The indicator function $\iota_{\leq c}(\projxT\,\alpha + \projyT\,\beta)$ denotes the classical optimal transport dual constraint $\alpha(x) + \beta(y) \leq c(x,y)$ for all $(x,y) \in X \times Y$ (see Section \ref{sec:IntroNotation}).
\end{definition}

This family also covers Wasserstein gradient flows and the structure can be extended to multiple couplings to describe barycenter and multi-marginal problems (see \cite{BenamouIterativeBregman2015,ChizatEntropicNumeric2018} for details).
As indicated, the standard optimal transport problem \eqref{eqn:PrototypeOT} is obtained as a special case.
\begin{definition}[Standard Optimal Transport]
	\label{def:OptimalTransport}
	Problem \eqref{eqn:PrototypeOT} is a special case of Def.~\ref{def:GenericFormulation} with $F_X \eqdef \iota_{\{\mu\}}$ and $F_Y \eqdef \iota_{\{\nu\}}$. The primal and dual functional are given by:
	\begin{subequations}
	\label{eqn:OptimalTransport}
	\begin{align}
		E(\pi) & = \iota_{\{\mu\}}(\projx\,\pi) + \iota_{\{\nu\}}(\projy\,\pi)
			+ \la c,\pi \ra + \iota_+(\pi) \\
		\label{eqn:OptimalTransportDual}
		J(\alpha,\beta) & = \la \alpha,\mu \ra + \la \beta,\nu \ra - \iota_{\leq c}(\projxT\,\alpha + \projyT\,\beta)
	\end{align}
	\end{subequations}
\end{definition}

Likewise, we can proceed for the unbalanced transport problem \eqref{eqn:PrototypeUnbalanced}.
\begin{definition}[Unbalanced Optimal Transport with KL Fidelity]
	\label{def:UnbalancedTransport}
	Problem \eqref{eqn:PrototypeUnbalanced} is a special case of Def.~\ref{def:GenericFormulation} with $F_X \eqdef \lambda \cdot \KL(\cdot|\mu)$ and $F_Y \eqdef \lambda \cdot \KL(\cdot|\nu)$. The primal and dual functional are given by:
	\begin{align}
		\label{eqn:UnbalancedTransportPrimal}
		E(\pi) & = \lambda \cdot \KL(\projx\,\pi|\mu) + \lambda \cdot \KL(\projy\,\pi|\nu)
			+ \la c,\pi \ra + \iota_+(\pi) \\
		\label{eqn:UnbalancedTransportDual}
		J(\alpha,\beta) & = -\lambda \cdot \KL^\ast(-\alpha/\lambda) - \lambda \cdot \KL^\ast(-\beta/\lambda) - \iota_{\leq c}(\projxT\,\alpha + \projyT\,\beta)
	\end{align}
\end{definition}

\subsection{Entropy Regularization and Diagonal Scaling Algorithms}
\label{sec:BackgroundEntropy}

Now we apply entropy regularization to the above transport-type problems (see Sect.~\ref{sec:IntroductionRelatedWork} for references) and replace the non-negativity constraint in \eqref{eqn:GenericPrimal} by the Kullback--Leibler divergence. For this we need to select some reference measure $\rho \in \R_+^{X \times Y}$. We then replace the term $\iota_+(\pi)$ in \eqref{eqn:GenericPrimal} by $\veps \cdot \KL(\pi|\rho)$, where $\veps>0$ is a regularization parameter. Then one typically `pulls' the linear cost term into the $\KL$ divergence:
\begin{gather}
	\la c, \pi \ra + \veps\, \KL(\pi|\rho) = \veps\, \KL(\pi|\kernel) + \veps \cdot \big(\rho(X \times Y) - \kernel(X \times Y) \big) \nonumber \\
	\label{eqn:Kernel}
	\tn{where $\kernel \in \R_+^{X \times Y}$ with} \quad \kernel(x,y) \eqdef \exp(-c(x,y)/\veps) \cdot \rho(x,y)\,.
\end{gather}
with the convention $\exp(-\infty)=0$.
$\kernel$ is called the kernel associated with $c$ and the regularization parameter $\veps$.
For convenience we formally introduce the function
\begin{align}
	\label{eqn:getKernel}
	\getKernel & : \R_{++} \to \R^{X \times Y}, &
	\veps & \mapsto \exp(-c/\veps) \odot \rho\,.
\end{align}
We obtain the regularized equivalent to Def.~\ref{def:GenericFormulation}.

\begin{definition}[Regularized Generic Formulation]
	\label{def:RegularizedGenericFormulation}
	\begin{subequations}
	\label{eqn:RegularizedGeneric}
	\begin{align}
		\label{eqn:RegularizedGenericPrimal}
		\min_{\pi \in \R_+^{X \times Y}} E(\pi) & \qquad \tn{with} &
		E(\pi) & \eqdef F_X(\projx \pi) + F_Y(\projy \pi) + \veps\,\KL(\pi|\kernel) \\
		\label{eqn:RegularizedGenericDual}
		\max_{(\alpha,\beta) \in (\R^{X},\R^Y)} J(\alpha,\beta) & \qquad \tn{with} &
		J(\alpha,\beta) & \eqdef -F_X^\ast(-\alpha) - F_Y^\ast(-\beta) - \veps\,\KL^\ast\left(
			[\projxT \alpha + \projyT \beta]/\veps \big| \kernel \right)
	\end{align}
	\end{subequations}
	Primal optimizers $\pi^\dagger$ have the form 
	\begin{align}
		\label{eqn:EntropyPDRelation}
		\pi^\dagger = \diag(\exp(\alpha^\dagger/\veps))\,\kernel\,\diag(\exp(\beta^\dagger/\veps))
	\end{align}
	where $(\alpha^\dagger,\beta^\dagger)$ are dual optimizers. Conversely, for dual optimizers $(\alpha^\dagger,\beta^\dagger)$, $\pi^\dagger$ constructed as above is primal optimal \cite{ChizatEntropicNumeric2018}.
\end{definition}
Intuitively we see the relation between \eqref{eqn:Generic} and \eqref{eqn:RegularizedGeneric} as $\veps \to 0$. For example, the term $\veps\,\KL^\ast\big([\projxT \alpha + \projyT \beta]/\veps \big| \kernel \big)$ in \eqref{eqn:RegularizedGenericDual} can be interpreted as a smooth barrier function for the dual constraint $\projxT \alpha + \projyT \beta \leq c$ in \eqref{eqn:GenericDual}. We refer to Sect.~\ref{sec:IntroductionRelatedWork} for references to rigorous convergence results.

Under suitable assumptions problem \eqref{eqn:RegularizedGenericDual} can be solved by alternating optimization in $\alpha$ and $\beta$ (see \cite{ChizatEntropicNumeric2018} for details).
For fixed $\beta$, consider the $\KL^\ast$-term:
\begin{align*}
	\KL_{X \times Y}^\ast\left(
			[\projxT \alpha + \projyT \beta]/\veps \big| \kernel \right)
	= \KL_X^\ast\left( \alpha/\veps | \kernel\,\exp(\beta/\veps) \right) + \sum_{(x,y) \in X \times Y}
		\kernel(x,y)\,\left(\exp(\beta(y)/\veps)-1\right).
\end{align*}
Note that the last term is constant w.r.t.\ $\alpha$. Therefore, optimizing \eqref{eqn:RegularizedGenericDual} over $\alpha$, for fixed $\beta$ corresponds to maximizing
\begin{align}
	\label{eqn:PartialDual}
	J_X(\alpha) & = -F_X^\ast(-\alpha) - \veps\,\KL_X^\ast\left(\alpha/\veps | \kernel\,\exp(\beta/\veps) \right),
\\
\intertext{where $\kernel\,\exp(\beta/\veps)$ denotes standard matrix vector multiplication. The corresponding primal problem consists of minimizing}
	\label{eqn:PartialPrimal}
	E_X(\sigma) & = F_X(\sigma) + \veps\,\KL_X(\sigma| \kernel\,\exp(\beta/\veps))\,.
\end{align}
This is a proximal step of $F_X$ for the $\KL$ divergence with step size $1/\veps$ (see Def.~\ref{def:KLProx}). So, by using the PD-optimality conditions between \eqref{eqn:PartialDual} and \eqref{eqn:PartialPrimal} (see e.g.~\cite[Thm.~19.1]{ConvexFunctionalAnalysis-11}), for a given $\beta$ the primal optimizer $\sigma^\dagger$ of \eqref{eqn:PartialPrimal} and the dual optimizer $\alpha^\dagger$ of \eqref{eqn:PartialDual} are given by
\begin{align}
	\label{eqn:PartialOptimisation}
	\sigma^\dagger & = \ProxKL{F_X}{\veps}(\kernel\,\exp(\beta/\veps)), &
	\alpha^\dagger & = \veps \, \log(\sigma^\dagger \oslash (\kernel\,\exp(\beta/\veps))),
\end{align}
Analogously, optimization w.r.t.\ $\beta$ for fixed $\alpha$ is related to $\KL$ proximal steps of $F_Y$.
Starting from some initial $\iterz{\beta}$, we can iterate alternating optimization to obtain a sequence $\iterz{\beta},\iter{\alpha}{1}, \iter{\beta}{1}, \iter{\alpha}{2}, \ldots$ as follows:
\begin{subequations}
\label{eqn:IterationsDual}
\begin{align}
	\iterll{\alpha} & \eqdef \veps\,\log\left( \ProxKL{F_X}{\veps}(\kernel\,\exp(\iterl{\beta}/\veps))
		\oslash [\kernel\,\exp(\iterl{\beta}/\veps)] \right), \\
	\iterll{\beta} & \eqdef \veps\,\log\left( \ProxKL{F_Y}{\veps}(\kernel^\T \exp(\iterll{\alpha}/\veps))
		\oslash [\kernel^\T\exp(\iterll{\alpha}/\veps)] \right).
\end{align}
\end{subequations}
The algorithm becomes somewhat simpler when it is formulated in terms of the effective variables
\begin{align}
	\label{eqn:ScalingFactors}
	u & \eqdef \exp(\alpha/\veps)\,, &
	v & \eqdef \exp(\beta/\veps)\,.
\end{align}
For more convenient notation we introduce the $\proxdivSymb$ operator of a function $F$ and step size $1/\veps$:
\begin{align}
	\label{eqn:Proxdiv}
	\proxdiv{F}{\veps} : \sigma \mapsto \ProxKL{F}{\veps}(\sigma) \oslash \sigma
\end{align}
The iterations then become:
\begin{align}
	\label{eqn:IterationsScaling}
	\iterll{u} & \eqdef \proxdiv{F_X}{\veps}(\kernel\,\iterl{v})\,, &
	\iterll{v} & \eqdef \proxdiv{F_Y}{\veps}(\kernel^\T\iterll{u})\,.
\end{align}
The primal-dual relation \eqref{eqn:EntropyPDRelation} then becomes $\pi^\dagger = \diag(u^\dagger)\,\kernel\,\diag(v^\dagger)$, which is why $u$ and $v$ are often referred to as diagonal scaling factors.

\begin{remark}
	\label{rem:DualVariablesScaling}
	Throughout this article, we will refer to the arguments of the dual functionals \eqref{eqn:GenericDual} and \eqref{eqn:RegularizedGenericDual} as \emph{dual variables} and denote them with $(\alpha,\beta)$.
	The effective, exponentiated variables, introduced in \eqref{eqn:ScalingFactors}, will be denoted by $(u,v)$ and referred to as \emph{scaling factors}.
\end{remark}

For future reference let us state the full scaling algorithm.
\begin{algorithmthm}[Scaling Algorithm]\hfill
\label{alg:Scaling}
\begin{algorithmic}[1]
\Function{ScalingAlgorithm}{$\veps$,$\iterz{v}$}
	\State $K \gets \getKernel(\veps)$\SemiCol
		$v \gets \iterz{v}$
		\Comment{compute kernel, see \eqref{eqn:getKernel}; initialize scaling variable}
	\Repeat
		\State $u \gets \proxdiv{F_X}{\veps}(\kernel\,v)$\SemiCol
			$v \gets \proxdiv{F_Y}{\veps}(\kernel^\T u)$
	\Until{stopping criterion}
	\State \Return $(u,v)$
\EndFunction
\end{algorithmic}
\end{algorithmthm}
The stopping criterion is typically a bound on the primal-dual gap between dual iterates $(\alpha,\beta)=\veps\,\log(u,v)$ and primal iterate $\pi=\diag(u)\,\kernel\,\diag(v)$, an error bound on the marginals of $\pi$ (for standard optimal transport) or a pre-determined number of iterations.

With alternating iterations \eqref{eqn:IterationsDual} or \eqref{eqn:IterationsScaling} a large family of functionals of form \eqref{eqn:RegularizedGenericPrimal} can be optimized, as long as the $\KL$ proximal steps of $F_X$ and $F_Y$ can be computed efficiently. A particularly relevant sub-family is, where $F_X$ and $F_Y$ are separable and are a sum of pointwise functions. Then the $\KL$ steps decompose into pointwise one-dimensional $\KL$ steps, see \cite[Section 3.4]{ChizatEntropicNumeric2018} for details.

Since Section \ref{sec:Auction} focusses on the special case of entropy regularized optimal transport, let us explicitly state the corresponding functional and iterations.

\begin{definition}[Entropic Optimal Transport]
	\label{def:EntropicOptimalTransport}
	For marginals $\mu \in \prob(X)$, $\nu \in \prob(Y)$ and a cost function $c \in \RExt^{X \times Y}$ the entropy regularized optimal transport problem is obtained from Def.~\ref{def:RegularizedGenericFormulation} by setting $F_X \eqdef \iota_{\{\mu\}}$, $F_Y \eqdef \iota_{\{\nu\}}$ (see Definition \ref{def:OptimalTransport} for the unregularized functional). We find:
	\begin{subequations}
	\begin{align}
		\label{eqn:EntropicOTPrimal}
		E(\pi) & = \iota_{\{\mu\}}(\projx\,\pi) + \iota_{\{\nu\}}(\projy\,\pi) + \veps\,\KL(\pi|\kernel) \\
		\label{eqn:EntropicOTDual}
		J(\alpha,\beta) & = \la \alpha,\mu \ra + \la \beta,\nu \ra - \veps\,\KL^\ast\left(
			[\projxT \alpha + \projyT \beta]/\veps \big| \kernel \right)		
	\end{align}
	\end{subequations}
	The proximal steps of $F_X$ and $F_Y$ are trivial (if $\kernel$ has non-empty columns and rows) and we recover the famous Sinkhorn iterations:
	\begin{subequations}
	\label{eqn:ProxdivSinkhorn}	
	\begin{align}
		\proxdiv{F_X}{\veps}(\sigma) & = \mu \oslash \sigma\,, &
		\proxdiv{F_Y}{\veps}(\sigma) & = \nu \oslash \sigma\,, \\
		\iterll{u} & = \mu \oslash (\kernel\,\iterl{v})\,, &
		\iterll{v} & = \nu \oslash (\kernel^\T \iterll{u})\,.
	\end{align}
	\end{subequations}
\end{definition}


\newcommand{\alphaBase}{\hat{\alpha}}
\newcommand{\betaBase}{\hat{\beta}}
\newcommand{\uRel}{\tilde{u}}
\newcommand{\vRel}{\tilde{v}}

\newcommand{\kernelStable}{\mc{K}}
\newcommand{\getKernelStable}{\tn{get}\mc{K}}
\newcommand{\getKernelSparse}{\tn{get}\hat{K}}

\newcommand{\kernelBase}{\mc{K}}
\newcommand{\kernelSparse}{\hat{K}}
\newcommand{\costSparse}{\hat{c}}

\newcommand{\getNeigh}{\tn{get}\mc{N}}

\newcommand{\children}{\tn{children}}
\newcommand{\parent}{\tn{parent}}

\section{Stabilized Sparse Multi-Scale Algorithm}
\label{sec:Algorithm}
Throughout this section we combine four adaptions to the Algorithm \ref{alg:Scaling} to overcome the limitations of a naive implementation outlined in Section \ref{sec:IntroductionRelatedWork}.

\subsection{Log-Domain Stabilization}
\label{sec:AlgorithmLogDomain}
When running Algorithm \ref{alg:Scaling} with small regularization parameter $\veps$, entries in the kernel $\kernel$, and the scaling factors $u$ and $v$ may become both very small and very large, leading to numerical difficulties.
However, under suitable conditions (e.g.~standard optimal transport, finite cost function) it can be shown that the optimal dual variables $(\alpha,\beta)$ remain finite and have a stable limit as $\veps \to 0$ (\cite{Cominetti-ExpBarrierConvergence-1992}, see also Remark \ref{rem:AuctionStabilityMotivation}).
In \cite{YuilleInvisibleHand1994,SharifySinkhorn2013} and others it was proposed to formulate the Sinkhorn iterations directly in terms of the dual variables, instead of the scaling factors. For example, an update of $\alpha$ would be performed as follows:
\begin{subequations}
\label{eqn:SinkhornLogDomain}
\begin{align}
	\iterll{\psi}(x,y) & = -c(x,y)+\iterl{\beta}(y), \quad
	\iterll{\tilde{\psi}}(x,y) = \iterll{\psi}(x,y) - \max_{y' \in Y} \iterll{\psi}(x,y') \\
	\iterll{\alpha}(x) & = \veps\,\log\mu(x) - \veps \log\Big( \sum_{y \in Y} \exp(\iterll{\tilde{\psi}}(x,y)/\veps) \cdot \rho(x,y) \Big) - \max_{y \in Y} \iterll{\psi}(x,y)
\end{align}
\end{subequations}
Subtracting the maximum from $\iterll{\psi}$ avoids large arguments in the exponential function.
While this resolves the issue of extreme scaling factors, it perturbs the simple matrix multiplication structure of the algorithm and requires many additional evaluations of $\exp$ and $\log$ in each iteration.

As an alternative, we employ the redundant parametrization of the iterations as proposed in\cite{ChizatEntropicNumeric2018}. The scaling factors $(u,v)$, \eqref{eqn:ScalingFactors}, are written as
\begin{align}
	\label{eqn:RelativeScalingFactors}
	u & = \uRel \odot \exp(\alphaBase/\veps)\,, &
	v & = \vRel \odot \exp(\betaBase/\veps)\,.
\end{align}
Our goal is to formulate iterations \eqref{eqn:IterationsScaling} directly in terms of $(\uRel,\vRel)$, while keeping $(\alphaBase,\betaBase)$ unchanged during most iterations. The role of $(\alphaBase,\betaBase)$ is to occasionally `absorb' the large values of $(u,v)$ such that $(\uRel,\vRel)$ remain bounded.
This leads to two types of iterations: stabilized iterations, during which only $(\uRel,\vRel)$ are changed, and absorption iterations, during which $(\uRel,\vRel)$ are absorbed into $(\alphaBase,\betaBase)$.
In this way, we can combine the simplicity of the scaling algorithm in terms of the scaling factor formulation with the numerical stability of the iterations in the log-domain formulation \eqref{eqn:SinkhornLogDomain}.

Analogous to the function $\getKernel$, \eqref{eqn:getKernel}, we define the \emph{stabilized kernel} as
\begin{subequations}
\label{eqn:StabilizedKernel}
\begin{align}
	\label{eqn:StabilizedKernelDiag}
	\getKernelStable & : \R^X \times \R^Y \times \R_{++} \to \R^{X \times Y}, &
	(\alpha,\beta,\veps) & \mapsto \diag(\exp(\alpha/\veps))\,\getKernel(\veps)\,\diag(\exp(\beta/\veps)),
\end{align}
\begin{align}
	\label{eqn:StabilizedKernelEntries}
	[\getKernelStable(\alpha,\beta,\veps)](x,y) & = \exp\left(-\tfrac{1}{\veps} \left[
		c(x,y)-\alpha(x)-\beta(y)
		\right]\right) \cdot \rho(x,y)\,.
\end{align}
\end{subequations}
The second line, \eqref{eqn:StabilizedKernelEntries}, should be used for numerical evaluation such that extreme values in $(\alpha,\beta)$ and $c$ can cancel before exponentiation.
Moreover, we introduce a stabilized version of the $\proxdivSymb$ operator:
\begin{align}
	\label{eqn:StabilizedProxdiv}
	\proxdiv{F}{\veps} : (\sigma,\gamma) \mapsto \ProxKL{F}{\veps}(\exp(-\gamma/\veps)\,\sigma) \oslash \sigma
\end{align}
Note that the regular version of the $\proxdivSymb$ operator, \eqref{eqn:Proxdiv}, is a special case of the stabilized variant with $\gamma=0$.
With $\kernel=\getKernel(\veps)$ and $\kernelStable=\getKernelStable(\alphaBase,\betaBase,\veps)$ we observe that
\begin{subequations}
\label{eqn:StabilizedProxdivEquiv}
\begin{align}
	\proxdiv{F}{\veps}(\kernelStable\,\vRel,\alphaBase) & = \proxdiv{F}{\veps}(\kernel\,v)
		\oslash \exp(\alphaBase/\veps), \\
	\proxdiv{F}{\veps}(\kernelStable^\T \uRel,\betaBase) & = \proxdiv{F}{\veps}(\kernel^\T u)
		\oslash \exp(\betaBase/\veps)\,.
\end{align}
\end{subequations}
For a threshold parameter $\tau>0$ we formally state the stabilized variant of Algorithm \ref{alg:Scaling}.
\begin{algorithmthm}[Stabilized Scaling Algorithm]\hfill
\label{alg:ScalingStabilized}
\begin{algorithmic}[1]
\Function{ScalingAlgorithmStabilized}{$\veps$,$\iterz{\alpha}$,$\iterz{\beta}$}
	\State $(\alphaBase,\betaBase) \gets (\iterz{\alpha},\iterz{\beta})$\SemiCol
		$(\uRel,\vRel) \gets (1_X,1_Y)$\SemiCol
		$\kernelStable \gets \getKernelStable(\alphaBase,\betaBase,\veps)$
	\Repeat
		\While{$[\|\uRel\|_\infty \leq \tau] \wedge [\|\vRel\|_\infty \leq \tau]$}
				\label{alg:ScalingStabilizedTauCheck}
			\State $\uRel \gets \proxdiv{F_X}{\veps}(\kernelStable\,\vRel,\alphaBase)$\SemiCol
				$\vRel \gets \proxdiv{F_Y}{\veps}(\kernelStable^\T \uRel,\betaBase)$
				\Comment{stabilized iteration}
		\EndWhile
		\State $(\alphaBase,\betaBase) \gets (\alphaBase,\betaBase) + \veps \cdot \log(\uRel,\vRel)$\SemiCol
			$(\uRel,\vRel) \gets (1_X,1_Y)$\SemiCol
			$\kernelStable \gets \getKernelStable(\alphaBase,\betaBase,\veps)$
			\Comment{absorption iteration}
	\Until{stopping criterion}
	\State $(\alphaBase,\betaBase) \gets (\alphaBase,\betaBase) + \veps \cdot \log(\uRel,\vRel)$
	\State \Return $(\alphaBase,\betaBase)$
\EndFunction
\end{algorithmic}
\end{algorithmthm}

Any successive combination of stabilized iterations and absorption iterations in Algorithm \ref{alg:ScalingStabilized} is mathematically equivalent to Algorithm \ref{alg:Scaling}, in the sense that they produce the same iterates (keep in mind (\ref{eqn:RelativeScalingFactors}--\ref{eqn:StabilizedProxdivEquiv})).
But numerically, with finite floating point precision, combining both types of iterations can make a significant difference.
In practice one can run several stabilized iterations in a row, occasionally checking whether $(\uRel,\vRel)$ become too large or too small (see line \ref{alg:ScalingStabilizedTauCheck}), and perform an absorption iteration if required.
This inflicts less computational overhead than the direct log-domain formulation \eqref{eqn:SinkhornLogDomain} and largely preserves the simple matrix multiplication structure of the scaling algorithms.

In the definitions for the stabilized kernel, \eqref{eqn:StabilizedKernelEntries}, and $\proxdivSymb$-operator, \eqref{eqn:StabilizedProxdiv}, there still appear exponentials of the form $\exp(\cdot/\veps)$, which may explode as $\veps \to 0$.
Extending the $\max$-argument trick in \eqref{eqn:SinkhornLogDomain} to more general scaling algorithms entails similar questions.
In the examples studied in Section \ref{sec:Numerics} and those given in \cite{ChizatEntropicNumeric2018} we find however, that evaluation of the exponential $\exp(-\gamma/\veps)$ can be avoided. For the special case of standard optimal transport $\veps$ no longer appears in the stabilized step.

\subsection[Epsilon-Scaling]{$\veps$-Scaling}
\label{sec:AlgorithmEpsScaling}

It is empirically and theoretically well-known (cf.~Section \ref{sec:IntroductionRelatedWork}) that convergence of Algorithm \ref{alg:Scaling} becomes slow as $\veps \to 0$. A popular heuristic remedy is the so-called $\veps$-scaling, where one subsequently solves the regularized problem with gradually decreasing values for $\veps$.
Let $\epsList=(\veps_1,\veps_2,\ldots,\veps_n)$ be a list of decreasing positive parameters. We extend Algorithm \ref{alg:ScalingStabilized} as follows:

\begin{algorithmthm}[Scaling Algorithm with $\veps$-Scaling]\hfill
\label{alg:ScalingEpsScaling}
\begin{algorithmic}[1]
\Function{ScalingAlgorithm$\veps$Scaling}{$\epsList$,$\iterz{\alpha}$,$\iterz{\beta}$}
	\State $(\alpha,\beta) \gets (\iterz{\alpha},\iterz{\beta})$
	\For{$\veps \in \epsList$} \Comment{iterate over list, form largest to smallest}
		\State $(\alpha,\beta) \gets$ \Call{ScalingAlgorithmStabilized}{%
			$\veps$,%
			$\alpha$,$\beta$} \label{alg:ScalingEpsScalingCallAlg}
	\EndFor
	\State \Return $(\alpha,\beta)$
\EndFunction
\end{algorithmic}
\end{algorithmthm}
The dual variable $\beta$ is kept constant while changing $\veps$, not the scaling factor $v$, because the optimal dual variables $(\alpha,\beta)$ usually have a stable limit as $\veps \to 0$, while the scaling factors $(u,v)$ diverge (see Sect.~\ref{sec:IntroductionRelatedWork} and also Theorem \ref{thm:AuctionEpsStability}).

So far, very little is known theoretically about the behaviour of $\veps$-scaling for the Sinkhorn algorithm (cf.~Section \ref{sec:IntroductionRelatedWork}).
Empirically, it is shown in Sect.~\ref{sec:NumericsEfficiency} that $\veps$-scaling is highly efficient and the number of required iterations does not increase exponentially. We observe that indeed it behaves similar as in the auction algorithm, as discussed in \cite{YuilleInvisibleHand1994}.
We work towards a theoretical quantification of this in Sect.~\ref{sec:Auction}.

Motivated by this, in practice we recommend a geometric decrease of $\veps$ and choose $\veps_k = \veps_0 \cdot \lambda^k$ such that $\veps_n$ is the desired final value, $\veps_0$ is on the order of the maximal values in the cost function $c$ and $\lambda \in (0,1)$ is a geometric scaling factor, typically in $[0.5,0.75]$.
If $\lambda$ is too small, iterations will start far from convergence after each change of $\veps$, increasing the risk of numerical instabilities and requiring more iterations. On the other hand, if $\lambda$ is too large, many stages of $\veps$-scaling have to be performed, increasing numerical overhead.

\subsection{Kernel Truncation}
\label{sec:AlgorithmSparseKernel}
Storing the dense kernel $\kernel$ and computing dense matrix multiplications during the scaling iterations \eqref{eqn:IterationsScaling} requires a lot of memory and time on large problems.
For several problems with particular structure, remedies have been proposed (Sect.~\ref{sec:IntroductionRelatedWork}).
But these do not comprise non-standard cost functions, as the one used for the Wasserstein-Fisher-Rao distance, \eqref{eqn:WFRCost}. Moreover they are not compatible with the log-stabilization (Section \ref{sec:AlgorithmLogDomain}), thus a certain level of blur cannot be avoided.
We are looking for a more flexible method to accelerate solving.

For many unregularized transport problems the optimal coupling $\pi^\dagger$ is concentrated on a sparse subset of $X \times Y$. In fact, this is the underlying mechanism for the efficiency of most solvers discussed in Section \ref{sec:IntroductionRelatedWork}.
For the regularized problems the optimal coupling will usually be dense. This is due to the diverging derivative of the $\KL$ divergence at zero.
However, as $\veps \to 0$, the optimal coupling quickly converges to an unregularized solution (see Sect.~\ref{sec:IntroductionRelatedWork}, in particular \cite[Thm.~5.8]{Cominetti-ExpBarrierConvergence-1992}). As $\veps \to 0$, large parts of the coupling will approach zero exponentially fast.

So while we will not be able to exactly solve the full problem, by solving suitable sparse sub-problems, we may still expect a reasonable approximation.
We formalize the concept of a sparse sub-problem.
\begin{definition}[Sparse Sub-Problems]
\label{def:Restriction}
Let $F_X$ and $F_Y$ be marginal functions and $c$ be a cost function as in Definition \ref{def:GenericFormulation} and let $\neigh \subset X \times Y$.
We introduce:
\begin{align}
	\label{eqn:SparseCostKernel}
	\costSparse(x,y) & \eqdef \begin{cases}
		c(x,y) & \tn{if } (x,y) \in \neigh\,, \\
		+\infty & \tn{else.}
		\end{cases}
	&
	\kernelSparse(x,y) & \eqdef \begin{cases}
		\kernel(x,y) & \tn{if } (x,y) \in \neigh\,, \\
		0 & \tn{else.}
		\end{cases}
\end{align}
We call problems \eqref{eqn:GenericPrimal} and \eqref{eqn:GenericDual} with $c$ replaced by $\costSparse$ the problems \emph{restricted} to $\neigh$. This corresponds to adding the constraint $\spt \pi \subset \neigh$ to the primal problem, and only enforcing the constraint $\alpha(x) + \beta(y) \leq c(x,y)$ on $(x,y) \in \neigh$ in the dual problem.
The entropy regularized variants of the restricted problems are obtained through replacing $\kernel$ by $\kernelSparse$ in \eqref{eqn:RegularizedGenericPrimal} and \eqref{eqn:RegularizedGenericDual}.
\end{definition}
Clearly, when $\neigh$ is sparse, then so is $\kernelSparse$ and the restricted regularized problem can be solved faster and with less memory.
We now quantify the error inflicted by restriction.
\begin{proposition}[Restricted Kernel and Duality Gap]
	\label{prop:Restriction}
	Let $\veps>0$ and $\neigh \subset X \times Y$. Let $E$ and $J$ be unrestricted regularized primal and dual functionals with kernel $\kernel$, as given in Definition \ref{def:RegularizedGenericFormulation}, and let $\hat{E}$ and $\hat{J}$ be the functionals of the problems restricted to $\neigh$, with sparse kernel $\kernelSparse$ (see Def.~\ref{def:Restriction}).
	
	Further, let $(\alpha,\beta)$ be a pair of dual variables, let $u = \exp(\alpha/\veps)$, $v=\exp(\beta/\veps)$ be the corresponding scaling factors and let $\pi = \diag(u)\,\kernelSparse\,\diag(v)$ be the corresponding (restricted) primal coupling.
	
	Then we find for the primal-dual gap between $\pi$ and $(\alpha,\beta)$:
	\begin{align}
		\label{eqn:TruncatedDualityGap}
		E(\pi) - J(\alpha,\beta) = \hat{E}(\pi) - \hat{J}(\alpha,\beta)
			+ \veps \sum_{(x,y) \in (X \times Y) \setminus \neigh} u(x)\,\kernel(x,y)\,v(y)\,.
	\end{align}
\end{proposition}

\begin{proof}
For the primal score we find:
\begin{align*}
	E(\pi) & = F_X(\projx\,\pi) + F_Y(\projy\,\pi) + \veps \sum_{(x,y) \in X \times Y}
		\left[ \pi(x,y) \log\left(\tfrac{\pi(x,y)}{\kernel(x,y)}\right) - \pi(x,y) + \kernel(x,y) \right]\\
	& = \hat{E}(\pi) + \veps \sum_{(x,y) \in (X \times Y) \setminus \neigh}
		\Big[ \underbrace{
			\pi(x,y) \log\left(\tfrac{\pi(x,y)}{\kernel(x,y)}\right) - \pi(x,y)
			}_{=0} + \kernel(x,y) \Big] \\
	\intertext{Analogously, for the dual score we get:}
	J(\alpha,\beta) & = -F_X^\ast(-\alpha) - F_Y^\ast(-\beta) - \veps \sum_{(x,y) \in X \times Y}
		\kernel(x,y) \cdot \left(\exp([\alpha(x)+\beta(y)]/\veps) - 1\right) \\
	& = \hat{J}(\alpha,\beta) - \veps \sum_{(x,y) \in (X \times Y) \setminus \neigh}
		\kernel(x,y) \cdot \Big( \underbrace{
			\exp([\alpha(x)+\beta(y)]/\veps)
			}_{= u(x)\,v(y)} - 1\Big)
\end{align*}
Together we obtain $E(\pi) - J(\alpha,\beta) = \hat{E}(\pi) - \hat{J}(\alpha,\beta)
		+ \veps \sum_{(x,y) \in (X \times Y) \setminus \neigh} u(x)\,\kernel(x,y)\,v(y)$.
\end{proof}
That is, the primal-dual gap for the original full functionals is equal to the gap for the truncated functionals plus the `mass' that we have chopped off by truncating $\kernel$ to $\kernelSparse$, when using the scaling factors $u$ and $v$.
If some $\neigh$ were known, on which most mass of the optimal $\pi^\dagger$ is concentrated, it would be sufficient to solve the problem restricted to $\neigh$, to get a good approximate solution. The remaining challenge is, how to identify $\neigh$ without knowing $\pi^\dagger$ before.

We propose an iterative re-estimation of $\neigh$, based on current dual iterates and to combine this with the log-stabilized iteration scheme (Section \ref{sec:AlgorithmLogDomain}) and the computation of the stabilized kernel, \eqref{eqn:StabilizedKernelEntries}. For a threshold parameter $\theta>0$ we define the following functions:
\begin{align}
	\label{eqn:getNeigh}
	\getNeigh(\alpha,\beta,\veps, \theta) & \eqdef \{(x,y) \in X \times Y \,\colon\, \exp(-\tfrac{1}{\veps}[c(x,y)-\alpha(x) - \beta(y)]) \geq \theta \} \\
	\label{eqn:getKernelSparse}
	[\getKernelSparse(\alpha, \beta, \veps, \theta)](x,y) & \eqdef \begin{cases}
		\exp(-\tfrac{1}{\veps}[c(x,y)-\alpha(x)-\beta(y)])\,\rho(x,y) & \tn{if }
			(x,y) \in \getNeigh(\alpha,\beta,\veps,\theta)\,, \\
		0 & \tn{else.}
		\end{cases}
\end{align}
$\getKernelSparse$ can be used instead of $\getKernelStable$ in Algorithm \ref{alg:ScalingStabilized}.
We refer to this as \emph{absorption iteration with truncation}.
For this combination one finds a simple bound for the primal-dual gap comparison of Proposition \ref{prop:Restriction}.
\begin{proposition}[Simple Duality Gap Estimate for Absorption Iterations with Truncation]
	\label{prop:RestrictionSimple}
	For a regularized problem as in Definition \ref{def:RegularizedGenericFormulation} with functionals $E$ and $J$, let $(u,v)$ be a pair of diagonal scaling factors and $(\alpha,\beta) = \veps \cdot \log(u,v)$, let $(\alphaBase,\betaBase)$ a pair of dual variables and $(\uRel,\vRel)$ a pair of relative scaling factors such that $u = \uRel \cdot \exp(\alphaBase/\veps)$ and $v = \vRel \cdot \exp(\betaBase/\veps)$.

	Let further
	$\neigh = \getNeigh(\alphaBase,\betaBase,\veps,\theta)$, 
	$\kernelBase = \getKernelSparse(\alphaBase,\betaBase,\veps,\theta)$, let $\hat{E}$ and $\hat{J}$ be the functionals restricted to $\neigh$ (see Def.~\ref{def:Restriction}) and $\pi = \diag(\uRel)\,\kernelBase\,\diag(\vRel)$. Then $E(\pi) - J(\alpha,\beta) \leq \hat{E}(\pi) - \hat{J}(\alpha,\beta) + \|\uRel\|_\infty \cdot \|\vRel\|_\infty \cdot \theta \cdot \rho(X \times Y)$.
\end{proposition}
\begin{proof}
	By virtue of Proposition \ref{prop:Restriction}
	\begin{align*}
		E(\pi) - J(\alpha,\beta) = \hat{E}(\pi) - \hat{J}(\alpha,\beta) + \sum_{(x,y) \in (X \times Y) \setminus \neigh} u(x)\,\kernel(x,y)\,v(y)\,.
	\end{align*}
	For $(x,y) \in (X \times Y) \setminus \neigh$ one has $\exp(-\tfrac{1}{\veps}[c(x,y)-\alphaBase(x)-\betaBase(y)]) < \theta$ and therefore
	\begin{align*}
		u(x)\,\kernel(x,y)\,v(y) & = \uRel(x)\,\exp\left(-\tfrac{1}{\veps}[
			c(x,y)-\alphaBase(x)-\betaBase(y)]\right) \cdot \rho(x,y) \cdot \vRel(y) \\
		& \leq \uRel(x)\,\vRel(y)\,\theta\,\rho(x,y)\,.
	\end{align*}
	The result follows by bounding $\uRel(x) \leq \|\uRel\|_\infty$, $\vRel(y) \leq \|\vRel\|_\infty$ and summing over $(X \times Y) \setminus \neigh$.
\end{proof}
This implies that in Algorithm \ref{alg:ScalingStabilized} with truncation the additional duality gap error due to the sparse kernel is bounded by $\|\iterl{\uRel}\|_\infty \cdot \|\iterl{\vRel}\|_\infty \cdot \theta \cdot \rho(X \times Y)$.
In particular, before every stabilized iteration the error is bounded by $\tau^2 \cdot \theta \cdot \rho(X \times Y)$ and after every absorption iteration it is bounded by $\theta \cdot \rho(X \times Y)$.
This bound is easy to evaluate and does not require to sum over $(X \times Y) \setminus \neigh$, as the exact expression in Proposition \ref{prop:Restriction}. We find that in practice this truncation error bound can be kept much smaller than the remaining primal-dual gap $\hat{E}(\pi) - \hat{J}(\alpha,\beta)$.

In general the stabilized iteration scheme \emph{with truncation} might not converge. However, by Proposition \ref{prop:RestrictionSimple}, if one regularly performs an absorption iteration before $\|\iterl{\uRel}\|_\infty \cdot \|\iterl{\vRel}\|_\infty$ becomes too large, the potential oscillations in the primal iterates and primal and dual functionals are numerically negligible.

\subsection{Multi-Scale Scheme}
\label{sec:AlgorithmMultiScale}

Finally, we propose to combine the stabilized sparse iterations with a hierarchical multi-scale scheme, analogous to the ideas in \cite{MultiscaleTransport2011,SchmitzerSchnoerr-SSVM2013,ObermanOptimalTransportationLP2015}.

This serves two purposes:
First, a hierarchical representation of the problem allows to determine the truncated sparse stabilized kernel $\getKernelSparse$, \eqref{eqn:getKernelSparse}, with a coarse-to-fine tree search, without explicitly testing all pairs $(x,y) \in X \times Y$.
The second reason is to make the combination of $\veps$-scaling (Algorithm \ref{alg:ScalingEpsScaling}) with the truncated stabilized scheme more efficient.
For a fixed threshold $\theta$, while $\veps$ is large, the support of the truncated kernel $\getKernelSparse$ will contain many variables.
At the same time, due to the blur induced by the regularization, the primal iterates will not provide a sharply resolved assignment.
Solving the problems with large $\veps$-value on a coarser grid reduces the number of required variables, without losing much spatial accuracy.
As $\veps$ decreases, so does the number of variables in $\getKernelSparse$ (since the exponential function decreases faster), and the resolution of $X$ and $Y$ can be increased.
Therefore, it is reasonable to coordinate the reduction of $\veps$ with increasing the spatial resolution of the transport problem, until the desired regularization and resolution are attained.

We will now briefly recall the hierarchical representation of a transport problem from \cite{SchmitzerSchnoerr-SSVM2013}.

\begin{definition}[Hierarchical Partition and Multi-Scale Measure Approximation \cite{SchmitzerSchnoerr-SSVM2013}]
	\label{def:HierarchicalPartition}
	For a discrete set $X$ a \emph{hierarchical partition} is an ordered tuple $(\hpartX_0,\ldots,\hpartX_I)$ of partitions of $X$ where $\hpartX_0 = \{ \{x\} \colon x \in X\}$ is the trivial partition of $X$ into singletons and each subsequent level is generated by merging cells from the previous level, i.e.~for $i \in \{1,\ldots,I\}$ and any $\hcellX \in \hpartX_i$ there exists some $\hat{\hpartX} \subset \hpartX_{i-1}$ such that $\hcellX = \bigcup_{\hat{\hcellX} \in \hat{\hpartX}} \hat{\hcellX}$.
	For simplicity we assume that the coarsest level is the trivial partition into one set: $\hpartX_I = \{X\}$.
	We call $I>0$ the \emph{depth} of $\hpartX$.
	
	This implies a directed tree graph with vertex set $\bigcup_{i=0}^I \hpartX_{i}$. For $i$, $j \in \{0,\ldots,I\}$, $i<j$ we say $\hcellX \in \hpartX_{i}$ is a \emph{descendant} of $\hcellX' \in \hpartX_j$ when $\hcellX \subset \hcellX'$. We call $\hcellX$ a \emph{child} of $\hcellX'$ for $i=j-1$, and a \emph{leaf} for $i=0$.
	For some $\mu \in \R^X$ its \emph{multi-scale measure approximation} is the tuple $(\mu_0,\ldots,\mu_I)$ of measures $\mu_i \in \R^{\hpartX_i}$ defined by $\mu_i(\hat{\hpartX}) = \mu(\bigcup_{\hcellX \in \hat{\hpartX}} \hcellX)$ for all subsets $\hat{\hpartX} \subset \hpartX_i$ and $i=0,\ldots I$.
	For convenience we often identify $X$ with the finest partition level $\hpartX_0$ and $\mu$ with $\mu_0$.
\end{definition}

\begin{definition}[Hierarchical Dual Variables and Costs \cite{SchmitzerSchnoerr-SSVM2013}]
\label{def:HierarchicalDualsCosts}
Let $X$ and $Y$ be discrete sets with hierarchical partitions $\hpartX=(\hpartX_0,\ldots,\hpartX_I)$, $\hpartY=(\hpartY_0,\ldots,\hpartY_I)$ of depth $I$, let $\alpha \in \R^{X}$ and $\beta \in \R^{Y}$ be functions over $X$ and $Y$, and let $c \in \R^{X \times Y}$ be a cost function.

Then we define the extension $\hat{\alpha}=(\hat{\alpha}_0,\ldots,\hat{\alpha}_I)$ of $\alpha$ onto the full partition $\hpartX$ by
\begin{align}
	\label{eqn:HierarchicalDuals}
	\hat{\alpha}_i(\hcellX) = \max_{x \in \hcellX} \alpha(x) = 
		\begin{cases}
			\alpha(x) & \tn{if } i=0 \tn{ and } \hcellX = \{x\} \tn{ for some } x \in X, \\
			\max_{\hcellX' \in \children(\hcellX)} \hat{\alpha}_{i-1}(\hcellX') & \tn{if } i>0,
		\end{cases}
\end{align}
for $i \in \{0,\ldots,I\}$ and $\hcellX \in \hpartX_i$ and analogous for $\hat{\beta}$ and $\beta$.
Similarly, define an extension $\hat{c}$ of $c$ by
\begin{align}
	\label{eqn:HierarchicalCost}
	\hat{c}_i(\hcellX,\hcellY) = \min_{(x,y) \in \hcellX \times \hcellY} c(x,y)
\end{align}
for $i \in \{0,\ldots,I\}$, $\hcellX \in \hpartX_i$ and $\hcellY \in \hpartY_i$.
\end{definition}

For $i \in \{0,\ldots,I\}$, $x \in \hcellX \in \hpartX_i$, $y \in \hcellY \in \hpartY_i$ we find
\begin{align}
	\label{eqn:HierarchicalConstraint}
	\hat{c}_i(\hcellX,\hcellY)-\hat{\alpha}_i(\hcellX)-\hat{\beta}_i(\hcellY)
	\leq c(x,y) - \alpha(x) - \beta(y)\,.
\end{align}
Now we can implement a hierarchical tree-search for $\getNeigh$ (and analogously $\getKernelSparse$).

\begin{algorithmthm}[Hierarchical Search for $\getNeigh$]\hfill
\label{alg:HierarchicalTruncation}
\begin{algorithmic}[1]
\Function{$\getNeigh$}{$\alpha$,$\beta$,$\veps$,$\theta$}
	\State $(\hat{\alpha},\hat{\beta}) \gets \tn{hierarchical extensions of } (\alpha,\beta)$
		\Comment{see \eqref{eqn:HierarchicalDuals}}
	\State $\neigh \gets$ \Call{ScallCell}{%
		$\hat{\alpha}$,$\hat{\beta}$,$\veps$,$\theta$,$I$,$\{X\}$,$\{Y\}$} %
		\Comment{call on coarsest partition level}
\EndFunction
\Statex
\Function{ScanCell}{$\hat{\alpha}$,$\hat{\beta}$,$\veps$,$\theta$,$i$,$\hcellX$,$\hcellY$}
	\State $\neigh' \gets \emptyset$ \Comment{temporary variable for result}
	\If{$\hat{c}_i(\hcellX,\hcellY)-\hat{\alpha}_i(\hcellX) - \hat{\beta}_i(\hcellY) %
			\leq -\veps \cdot \log \theta$} %
			\Comment{if cell cannot be ruled out at this level}
		\If{$i>0$} \Comment{if not yet at finest level, check on all children}
			\For{$(\hcellX',\hcellY') \in \tn{children}(\hcellX) \times \tn{children}(\hcellY)$}
				\State $\neigh' \gets \neigh' \cup$ %
						\Call{ScanCell}{%
						$\hat{\alpha}$,$\hat{\beta}$,$\veps$,$\theta$,$i-1$,$\hcellX'$,$\hcellY'$}
			\EndFor
		\Else \Comment{if at finest level, add variable}
			\State $\neigh' \gets \neigh' \cup (\hcellX \times \hcellY)$
				\Comment{recall $\hcellX=\{x\}$, $\hcellY=\{y\}$ for some $(x,y) \in X \times Y$ at $i=0$}
		\EndIf
	\EndIf
	\State \Return $\neigh'$
\EndFunction
\end{algorithmic}
\end{algorithmthm}

From \eqref{eqn:HierarchicalConstraint} follows directly that Algorithm \ref{alg:HierarchicalTruncation} implements \eqref{eqn:getNeigh}.

In many applications the discrete sets $X$ and $Y$ are point clouds in $\R^d$ and the hierarchical partitions are $2^d$-trees over $X$ and $Y$ (see e.g.~\cite{SchmitzerShortCuts2015}).
The cost function $c$ is often originally defined on the whole product space $\R^d \times \R^d$ (such as the squared Euclidean distance).
For the validity of Algorithm \ref{alg:HierarchicalTruncation} it suffices if $\hat{c}_i(\hcellX,\hcellY) \leq \min_{(x,y) \in \hcellX \times \hcellY} c(x,y)$.
This allows to avoid computing (and storing) the full cost matrix $c \in \R^{X \times Y}$ and the explicit minimizations in \eqref{eqn:HierarchicalCost}. $c$ and lower bounds on $\hat{c}_i$ can be computed on-demand directly using the tree-structure.

\begin{figure}[hbt]
	\centering
	\includegraphics{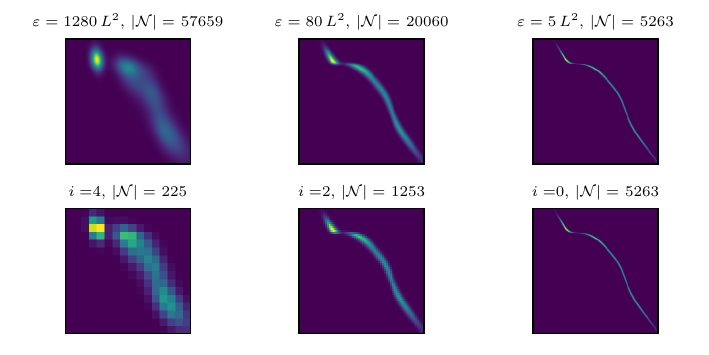}
	\caption{$\veps$-scaling, truncated kernels and multi-scale scheme. %
	$X=Y$ is a uniform one-dimensional grid, representing $[0,1]$, $|X|=256$, $h=256^{-1}$. $\mu$ and $\nu$ are smooth mixtures of Gaussians.
	\textbf{Top row} Density of optimal coupling $\pi^\dagger$ on $X^2$ for various $\veps$. $|\neigh|$ is the number of variables in the truncated, stabilized kernel for fixed $\theta=10^{-10}$. %
	As $\veps$ decreases, so does $|\neigh|$, since $\pi^\dagger$ becomes more concentrated.
	\textbf{Bottom row} Optimal couplings for same $\veps$ as top row, but for different levels $i$ of hierarchical partitions. $i$ and $\veps$ were chosen to keep number of variables per $x \in X$ approximately constant. For high $\veps$ (and $i$) $|\neigh|$ is now dramatically lower. While $\pi^\dagger$ is `pixelated' for high $i$, due to blur, it provides roughly the same spatial information as the top row. %
	Images in third column are identical.
	}
	\label{fig:MultiScaleEpsScaling}
\end{figure}

The second purpose of the multi-scale scheme is the combination with $\veps$-scaling.
As explained above, the purpose is to reduce the number of variables while $\veps$ is large. For an illustration see Fig.~\ref{fig:MultiScaleEpsScaling}.
For this, we divide the list $\epsList$ of regularization parameters $\veps$ into multiple lists $(\epsList_0,\ldots,\epsList_I)$, with the largest values in $\epsList_I$ and the smallest (and final) values in $\epsList_0$, and sorted from largest to smallest within each $\epsList_i$. Then, for every $i$ from $I$ down to $0$ we perform $\veps$-scaling with list $\epsList_i$ at hierarchical level $i$, using the dual solution at each level as initialization at the next stage. The full algorithm, combining $\log$-stabilization, $\veps$-scaling, kernel truncation and the multi-scale scheme, is sketched next.

\begin{algorithmthm}[Full Algorithm]\hfill
\label{alg:Full}
\begin{algorithmic}[1]
\Function{ScalingAlgorithmFull}{$(\epsList_0,\ldots,\epsList_I)$,$\theta$}
	\State $i=I$\SemiCol
		$(\alpha,\beta) \gets ((0),(0))$ \Comment{initialize scale counter and dual variables}
	\While{$i \geq 0$}
		\CommentLine{solve problem at scale $i$ with $\veps$-scaling over $\epsList_i$}
		\For{$\veps \in \epsList_i$} \Comment{iterate over list, from largest to smallest}
			\State $(\alpha,\beta) \gets$ \Call{ScalingAlgorithmStabilized}{$i$,$\veps$,$\theta$,$\alpha$,$\beta$}
				\label{alg:FullStableCall}
		\EndFor
		\State $i \gets i-1$
		\If{$i\geq 0$} \Comment{refine dual variables}
			\State $(\alpha,\beta) \gets$ \Call{RefineDuals}{$i$,$\alpha$,$\beta$}
		\EndIf
	\EndWhile
	\State \Return $(\alpha,\beta)$
\EndFunction
\end{algorithmic}
Note: \textsc{ScalingAlgorithmStabilized} refers to calling Algorithm \ref{alg:ScalingStabilized} for solving the problem at scale $i$, with $\getKernelStable$ replaced by $\getKernelSparse$, \eqref{eqn:getKernelSparse}, with threshold $\theta$, implemented according to Algorithm \ref{alg:HierarchicalTruncation}. Accordingly, two arguments $i$ and $\theta$ were added.
\textsc{RefineDuals} initializes the dual variables $(\alpha,\beta)$ at level $i$ by setting the values at $\hcellX$ to the previous values at $\parent(\hcellX)$ for all cells $\hcellX$ in $\hpartX_i$.
\end{algorithmthm}

\begin{remark}[Hierarchical Representation of $F_X$, $F_Y$]
	To solve the problem at hierarchical scale $i$, not only do we need a coarse version of $c$, as given in \eqref{eqn:HierarchicalCost}. In addition we need hierarchical versions of the marginal functions $F_X$, $F_Y$, see \eqref{eqn:RegularizedGeneric}.
	An appropriate choice is often clear from the context of the problem. For example, for an optimal transport problem between $\mu$ and $\nu$, see Def.~\ref{def:EntropicOptimalTransport}, we set $F_{{\hpartX}_i} = \iota_{\{\mu_i\}}$, where $\mu_i$ is taken from the multi-scale measure approximation of $\mu$ (see Def.~\ref{def:HierarchicalPartition}).
	For the unbalanced transport problem with $\KL$ fidelity, Def.~\ref{def:UnbalancedTransport}, we use $F_{\hpartX_i} = \lambda \cdot \KL_{\hpartX_i}(\cdot|\mu_i)$.
\end{remark}

This completes the modifications of the diagonal scaling algorithm. Their usefulness will be demonstrated numerically in Sect.~\ref{sec:Numerics}.


\newcommand{\qTarget}{q_{\tn{target}}}
\newcommand{\cMax}{C}
\newcommand{\edges}{\mc{E}}
\newcommand{\Jdiag}{\hat{J}}
\newcommand{\Jdual}{J}
\newcommand{\JdiagEff}{\hat{\mc{J}}}
\newcommand{\nK}{R}
\section{Analogy between Sinkhorn and Auction Algorithm}
\label{sec:Auction}
In this section we develop a new complexity analysis of the Sinkhorn algorithm and examine the efficiency of $\veps$-scaling.
In \cite{YuilleInvisibleHand1994} an intuitive similarity between the Sinkhorn algorithm for the entropy regularized linear assignment problem and the auction algorithm was pointed out.
This similarity motivates our approach.

In this section we only consider the standard Sinkhorn algorithm (as opposed to general scaling algorithms), since the auction algorithm solves the linear assignment problem and assumptions on fixed marginals $\mu$, $\nu$ are required for our analysis.

The auction algorithm is briefly recalled in Section \ref{sec:AuctionRecall}.
In Section \ref{sec:AuctionSimple} we introduce an asymmetric variant of the Sinkhorn algorithm, that is more similar to the original auction algorithm and provide an analogous worst-case estimate for the number of iterations until a given precision is achieved.
A stability result for the dual optimal solutions under change of the regularization parameter $\veps$ is given in Section \ref{sec:AuctionStability} and we discuss how it relates to $\veps$-scaling in Sect.~\ref{sec:AuctionEpsScaling}.

\subsection{Auction Algorithm}
\label{sec:AuctionRecall}
For the sake of self-containedness, in this section we briefly recall the auction algorithm and its basic properties. Note that compared to the original presentation (e.g.~\cite{Bertsekas-ParallelAuction1988}) we flipped the overall sign for compatibility with the notion of optimal transport.

In the following we consider a linear assignment problem, i.e.~an optimal transport problem between two discrete sets $X$, $Y$ with equal cardinality $|X|=|Y|=N$ where the marginals $\mu \in \R_+^X$, $\nu \in \R_+^Y$ are the counting measures.
For simplicity we assume that the cost function $c \in \R_+^{X \times Y}$ is finite and non-negative.

The main loop of the auction algorithm is divided into two parts: During the bidding phase, elements of $X$ that are unassigned determine their locally most attractive counterpart in $Y$ (taking into account the current dual variables) and submit a bid for them.
During the assignment phase, all elements of $Y$ that received at least one bid, pick the most attractive one and change the current assignment accordingly.
A formal description is given in the following.

\begin{algorithmthm}[Auction Algorithm]\hfill
\label{alg:Auction}
\begin{algorithmic}[1]
\Function{AuctionAlgorithm}{$\iterz{\beta}$}
	\State $\pi \gets 0_{X \times Y}$\SemiCol
		$\beta \gets \iterz{\beta}$
		\Comment{initialize variables: `empty' primal coupling, zero dual variable}
	\While{$\pi(X \times Y) < N$}
		\State $B(y) \gets \emptyset$ for all $y \in Y$ \Comment{start bidding phase: initialize empty bid lists}
		\For{$x \in \{x' \in X : \pi(\{x'\} \times Y) = 0\}$} \Comment{iterate over unassigned $x$}
			\State $y \gets \argmin_{y' \in Y} [c(x,y')-\beta(y')]$ \Comment{pick some element from argmin} \label{alg:Auction:FindYBid}
			\State $\alpha(x) \gets c(x,y)-\beta(y)$
				\Comment{set dual variable}
				\label{alg:Auction:Alpha}
			\State $B(y) \gets B(y) \cup \{x\}$ \Comment{submit bid to $y$, i.e.~add $x$ to bid list of $y$}
		\EndFor
		\For{$y \in \{y' \in Y : B(y') \neq \emptyset\}$}
			\Comment{assignment phase: iterate over all $y$ that received bids}
			\State $\pi(\cdot,y) \gets 0$ \Comment{set column of coupling to zero}
			\State $x \gets \argmin_{x' \in B(y)} [c(x',y)-\alpha(x')]$ \Comment{find best bidder, pick one if multiple}
			\State $\beta(y) \gets c(x,y) - \alpha(x) - \veps$\SemiCol
				$\pi(x,y) \gets 1$
				\Comment{update dual variable and coupling}
				\label{alg:Auction:Beta}
		\EndFor
	\EndWhile
	\State \Return $(\pi,(\alpha,\beta))$
\EndFunction
\end{algorithmic}
\end{algorithmthm}
\begin{remark}
	In the above algorithm, line \ref{alg:Auction:Alpha} is usually replaced by $\alpha(x) \gets \min_{y' \in Y \setminus \{y\}} [ c(x,y')-\beta(y')]$, which in practice may reduce the number of iterations. It does not affect the following worst-case analysis however, therefore we keep the simpler version.
\end{remark}

We briefly summarize the main properties of the algorithm.
\begin{proposition}
	\label{prop:AuctionMonotonicity}
	With $\veps>0$ and $\iterz{\beta}=0_Y$, Algorithm \ref{alg:Auction} has the following properties:
	\begin{enumerate}[(i)]
		\item $\alpha$ is increasing, $\beta$ is decreasing.
		\item After each assignment phase one finds $\alpha(x) + \beta(y) \leq c(x,y)$ and $[\pi(x,y) > 0]$ $\Rightarrow$ $[\alpha(x) + \beta(y) \geq c(x,y) - \veps]$.
			The latter property is called $\veps$-complementary slackness.
			\label{item:AuctionPDSlack}
		\item The primal iterate satisfies $\projx \pi \leq \mu$ and $\projy \pi \leq \nu$.
			\label{item:AuctionPrimalSubfeasible}
		\item The algorithm terminates after at most $N \cdot (\cMax/\veps+1)$ iterations, where $\cMax = \max c$.
	\end{enumerate}
\end{proposition}
For a proof see for example \cite{Bertsekas-ParallelAuction1988}.
From $\veps$-complementary slackness we deduce the following result.
\begin{corollary}
	\label{cor:AuctionPDGap}
	Upon convergence, the primal-dual gap of $\pi$ and $(\alpha,\beta)$, cf.~Def.~\ref{def:OptimalTransport}, is bounded by $\la c, \pi \ra - (\la \mu,\alpha \ra + \la \nu, \beta \ra) \leq N \cdot \veps$.
	If $c$ is integer and $\veps<1/N$, then the final primal coupling is optimal.
\end{corollary}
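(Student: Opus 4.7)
The plan is to combine the two properties from \thref{thm:AuctionMonotonicity} upon termination. At termination we have $\pi(X\times Y) = N$, which together with $\projx\pi \leq \mu$ and $\projy\pi \leq \nu$ from \eqref{eqn:AuctionPrimalSubfeasible} and the fact that $\mu(X)=\nu(Y)=N$ forces $\projx\pi = \mu$ and $\projy\pi = \nu$. Thus $\pi$ is a feasible primal coupling (in fact, a permutation-type assignment), and by the first inequality in \eqref{eqn:AuctionPDSlack} the pair $(\alpha,\beta)$ is dual feasible.

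The bound on the primal-dual gap then follows by directly summing the $\veps$-complementary slackness condition against $\pi$. First I would write
\begin{align*}
    \la c,\pi\ra \;=\; \sum_{(x,y):\,\pi(x,y)>0} c(x,y)\,\pi(x,y)
\end{align*}
and apply the inequality $c(x,y) \leq \alpha(x) + \beta(y) + \veps$, valid on $\spt\pi$ by \eqref{eqn:AuctionPDSlack}. Using the marginal identities $\projx\pi=\mu$ and $\projy\pi=\nu$, this reorganizes into
\begin{align*}
    \la c,\pi\ra \;\leq\; \la\alpha,\mu\ra + \la\beta,\nu\ra + \veps\cdot\pi(X\times Y)
    \;=\; \la\alpha,\mu\ra + \la\beta,\nu\ra + N\veps,
\end{align*}
which is exactly the claimed gap bound.

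For the second statement, note that since $(\alpha,\beta)$ is dual feasible, $\la\alpha,\mu\ra + \la\beta,\nu\ra$ lower-bounds the optimal primal value, while $\la c,\pi\ra$ upper-bounds it (as $\pi$ is primal feasible). Hence the difference between $\la c,\pi\ra$ and the optimal value is also bounded by $N\veps$. When $c$ takes integer values, both $\la c,\pi\ra$ and the optimum lie in $\Z$, so their nonnegative difference is either $0$ or at least $1$; under $\veps<1/N$ the gap $N\veps<1$ rules out the latter, forcing $\pi$ to be optimal.

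No step here looks hard: the whole argument is a one-line summation of $\veps$-complementary slackness plus the standard integrality rounding trick. The only point requiring minor care is to justify that at termination the marginal inequalities \eqref{eqn:AuctionPrimalSubfeasible} are saturated — which follows immediately from the termination criterion $\pi(X\times Y)=N$ combined with $|X|=|Y|=N$ and $\mu$, $\nu$ being counting measures.
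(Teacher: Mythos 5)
Your proof is correct and follows exactly the route the paper intends: the paper derives this corollary "directly" from $\veps$-complementary slackness, which is precisely your summation of $c(x,y)\leq\alpha(x)+\beta(y)+\veps$ against $\pi$ over its support, combined with the saturation of the marginals at termination and the standard integrality rounding for the second claim.
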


\begin{remark}[$\veps$-Scaling for the Auction Algorithm]
\label{rem:AuctionEpsScaling}
During the auction algorithm it may happen that several elements in $X$ compete for the same target $y \in Y$, leading to the minimal decrease of $\beta(y)$ by $\veps$ in each iteration. This phenomenon has been dubbed `price haggling' \cite{Bertsekas-MinCostFlowReview1988} and can cause poor practical performance of the algorithm, close to the worst-case iteration bound.
The impact of price haggling can be reduced by the $\veps$-scaling technique, where the algorithm is successively run with a sequence of decreasing values for $\veps$, each time using the final value of $\beta$ as initialization of the next run (see also Algorithm \ref{alg:ScalingEpsScaling}).
With this technique the factor $C/\veps$ in the iteration bound can essentially be reduced to a factor $\log(C/\veps)$. An analysis of the $\veps$-scaling technique for more general min-cost-flow problems can be found in \cite{Bertsekas-MinCostFlowReview1988}.
\end{remark}

\subsection{Asymmetric Sinkhorn Algorithm and Iteration Bound}
\label{sec:AuctionSimple}
We now introduce a slightly modified variant of the standard Sinkhorn algorithm, derive an iteration bound and make a comparison with the auction algorithm. We emphasize that this modification is primarily made to facilitate theoretical study of the algorithm and to understand why convergence becomes slow as $\veps \to 0$. We do not advocate its merits in an actual implementation.

For $\mu \in \prob(X)$, $\nu \in \prob(Y)$ and a cost function $c \in \R_+^{X \times Y}$ we consider the entropic optimal transport problem (Def.~\ref{def:EntropicOptimalTransport}).
Set the reference measure $\rho$ for regularization, see \eqref{eqn:Kernel}, to the product measure $\rho(x,y) = \mu(x) \cdot \nu(y)$.
We state the modified Sinkhorn algorithm with parameter $\qTarget \in (0,1)$ that measures how much mass has to be assigned.
\begin{algorithmthm}[Asymmetric Sinkhorn Algorithm]\hfill
\label{alg:SinkhornAsymmetric}
\begin{algorithmic}[1]
\Function{AsymmetricSinkhorn}{$\veps$,$\iterz{v}$,$\qTarget$}
	\State $K \gets \getKernel(\veps)$\SemiCol
		$v=\iterz{v}$
		\Comment{compute kernel, initialize scaling factor}
	\Repeat
		\State $u \gets \mu \oslash (\kernel\,v)$\SemiCol
			$\hat{v} \gets \nu \oslash (\kernel^\T u)$
			\label{alg:SinkhornAsymmetric:uvHat}
		\State $v \gets \min\{v,\hat{v}\}$ \Comment{element-wise minimum}
			\label{alg:SinkhornAsymmetric:vMin}
		\State $\pi \gets \diag(u)\,\kernel\,\diag(v)$\SemiCol
			$q \gets \pi(X \times Y)$ \Comment{update coupling and `assigned mass'-fraction $q$}
			\label{alg:SinkhornAsymmetric:pi}
	\Until{$q \geq \qTarget$}
	\State \Return $(\pi,(u,v))$
\EndFunction
\end{algorithmic}
\end{algorithmthm}

The only differences to the standard Sinkhorn algorithm (given by Algorithm \ref{alg:Scaling} with $\proxdivSymb$-operators \eqref{eqn:ProxdivSinkhorn}) lie in line \ref{alg:SinkhornAsymmetric:vMin} and in the choice of the specific stopping criterion (see Remark \ref{rem:SinkhornAsymmetricStoppingCriterion} for a discussion).
In the standard algorithm one would set $v \gets \hat{v}$.
The modification implies that $v$ is monotonously decreasing, which implies the following result for Algorithm \ref{alg:SinkhornAsymmetric} in the spirit of Proposition \ref{prop:AuctionMonotonicity}. This monotonicity is crucial for bounding the number of iterations (see also Remark \ref{rem:AnalogyAuction}).

Throughout this section, for clarity, we enumerate the iterates $u$, $v$, as well as the auxiliary variables $\hat{v}$, $\pi$ and $q$ in Algorithm \ref{alg:SinkhornAsymmetric}, starting with $\iterz{v}$ and proceeding with $(\iter{u}{1},\iter{v}{1},\iter{\hat{v}}{1},\iter{\pi}{1},\iter{q}{1}),\ldots$, similar to formulas \eqref{eqn:IterationsScaling} and the corresponding dual variable iterates $(\iterl{\alpha},\iterl{\beta},\iterl{\hat{\beta}})\allowbreak=\allowbreak\veps \cdot \log(\iterl{u},\iterl{v},\iterl{\hat{v}})$.
\begin{proposition}[Monotonicity of Asymmetric Sinkhorn Algorithm]\hfill
	\label{prop:SinkhornAsymmetricMonotonicity}
	\begin{enumerate}[(i)]
		\item $u$ and $\alpha=\veps\,\log u$ are increasing, $v$ and $\beta=\veps\,\log v$ are decreasing, $q$ is increasing.
		\item $\projx\,\pi \leq \mu$ and $\projy\,\pi \leq \nu$. We say $\pi$ is \emph{sub-feasible}.
		\item There exists some $y^\ast \in Y$ such that $v(y^\ast) = \iterz{v}(y^\ast)$ for all iterations.	
	\end{enumerate}
\end{proposition}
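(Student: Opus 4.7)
The plan is to verify the three items in order, relying essentially on the fact that line~\ref{alg:SinkhornAsynchronous:vMin} enforces $\iterl{v} \leq \iter{v}{\ell-1}$ componentwise, plus elementary manipulations of the two update formulas in lines~\ref{alg:SinkhornAsynchronous:u} and~\ref{alg:SinkhornAsynchronous:vHat}.

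For (i), the decrease of $v$ is immediate from line~\ref{alg:SinkhornAsynchronous:vMin}, and then $\iterll{u} = \mu \oslash (\kernel\,\iterl{v}) \geq \mu \oslash (\kernel\,\iter{v}{\ell-1}) = \iterl{u}$ follows from $\kernel \geq 0$; the claims for $\alpha$, $\beta$ follow by monotonicity of $\log$. For $q$ I would first observe, by the same argument applied to $\hat v$, that $\iterl{\hat v}$ is itself decreasing in $\ell$ (since it is defined from the increasing $\iterl{u}$). Using $\iterl{\hat v} = \nu \oslash (\kernel^\T \iterl{u})$ one can rewrite
\begin{align*}
  \iterl{q} \;=\; \sum_y \iterl{v}(y)\,(\kernel^\T \iterl{u})(y) \;=\; \sum_y \nu(y)\,\tfrac{\iterl{v}(y)}{\iterl{\hat v}(y)},
\end{align*}
and then analyse each $y$ via the dichotomy $\iter{v}{\ell-1}(y) \geq \iterl{\hat v}(y)$ (``$y$ saturated at step $\ell$'') versus $\iter{v}{\ell-1}(y) < \iterl{\hat v}(y)$. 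In the saturated case the ratio equals $1$, and $y$ stays saturated at step $\ell+1$ because $\iter{\hat v}{\ell+1}(y) \leq \iterl{\hat v}(y) = \iterl{v}(y)$ forces $\iter{v}{\ell+1}(y) = \iter{\hat v}{\ell+1}(y)$; in the unsaturated case the ratio $\iter{v}{\ell-1}(y)/\iterl{\hat v}(y) < 1$ only grows because $\hat v(y)$ decreases. Summing over $y$ yields $\iter{q}{\ell+1} \geq \iterl{q}$.

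Claim (ii) is then a one-line calculation using exactly the same two inequalities $\iterl{v} \leq \iter{v}{\ell-1}$ and $\iterl{v} \leq \iterl{\hat v}$:
\begin{align*}
  (\projx \iterl{\pi})(x) &= \iterl{u}(x)\,(\kernel\,\iterl{v})(x) \;\leq\; \iterl{u}(x)\,(\kernel\,\iter{v}{\ell-1})(x) \;=\; \mu(x), \\
  (\projy \iterl{\pi})(y) &= \iterl{v}(y)\,(\kernel^\T \iterl{u})(y) \;\leq\; \iterl{\hat v}(y)\,(\kernel^\T \iterl{u})(y) \;=\; \nu(y).
\end{align*}

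For (iii) I would argue by contradiction, exploiting the ``once saturated, always saturated'' property established in (i). Suppose that for every $y \in Y$ there is a smallest $\ell_y \geq 1$ with $\iter{v}{\ell_y}(y) < \iter{v}{\ell_y-1}(y)$. Then $y$ is saturated at every $\ell \geq \ell_y$, so at $\ell^\ast := \max_{y} \ell_y$ (finite, as $|Y| < \infty$) one has $\iter{v}{\ell^\ast} = \iter{\hat v}{\ell^\ast} = \nu \oslash (\kernel^\T \iter{u}{\ell^\ast})$, whence $\iter{q}{\ell^\ast} = \nu(Y) = 1$. But (ii) applied to the $X$-marginal gives
\begin{align*}
  1 \;=\; \iter{q}{\ell^\ast} \;=\; \sum_x \iter{u}{\ell^\ast}(x)\,(\kernel\,\iter{v}{\ell^\ast})(x) \;\leq\; \sum_x \iter{u}{\ell^\ast}(x)\,(\kernel\,\iter{v}{\ell^\ast-1})(x) \;=\; \mu(X) \;=\; 1,
\end{align*}
so equality holds, forcing $(\kernel\,\iter{v}{\ell^\ast})(x) = (\kernel\,\iter{v}{\ell^\ast-1})(x)$ for every $x$ with $\iter{u}{\ell^\ast}(x) > 0$. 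Since $c$ is finite and $\rho = \mu \otimes \nu$ is strictly positive we have $\kernel > 0$, and $\iter{u}{\ell^\ast}$ is positive on $\spt \mu \neq \emptyset$, so this forces $\iter{v}{\ell^\ast} = \iter{v}{\ell^\ast-1}$, contradicting the strict inequality at the $y$ realising the maximum. The main obstacle I foresee is the clean formulation of the ``once saturated, always saturated'' principle and isolating the correct iteration $\ell^\ast$ at which to apply it; once that observation is in place, sub-feasibility from (ii) does all the remaining work.
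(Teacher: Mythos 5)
Your proof is correct and follows essentially the same route as the paper's: monotonicity of $v$, $u$, $\hat v$ from the elementwise minimum, the per-$y$ saturated/unsaturated dichotomy with the ``once saturated, always saturated'' observation for $q$, and the mass-balance argument ($\langle \mu,1\rangle=\langle\nu,1\rangle=1$ forcing $v$ to be unchanged when every $y$ is saturated) for item (iii). The only cosmetic differences are that you phrase (iii) as a contradiction via first-decrease times where the paper argues directly with the nested sets of saturated indices, and that you write $q(y)$ as $\nu(y)\,v(y)/\hat v(y)$ instead of $v(y)\,[\kernel^\T u](y)$.
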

\begin{proof}
	By construction we have $\iterll{v} \leq \iterl{v}$. Consequently $\kernel\,\iterll{v} \leq \kernel\,\iterl{v}$ and thus $\iterll{u} \geq \iterl{u}$ and eventually $\iterll{\hat{v}} \leq \iterl{\hat{v}}$.
	
	After updating $\iterll{u}$ the row constraints are satisfied. That is $\projx \diag(\iterll{u})\,\kernel\,\diag(\iterl{v}) = \mu$. Since $\iterll{v}$ is only decreased (i.e.~if the corresponding column constraint is violated from above), afterwards the iterate $\iterll{\pi}$ is sub-feasible.
	
	Since $\iterl{\hat{v}}$ is decreasing, it follows that if $\iterl{v}(y) = \iterl{\hat{v}}(y)$ for some $y \in Y$, then $\iter{v}{k}(y) = \iter{\hat{v}}{k}(y)$ for all $k \geq \ell$. Let $\iterl{Y} = \{ y \in Y \,:\, \iterl{v}(y) = \iterl{\hat{v}}(y)\}$. Then $\iterl{Y} \subset \iterll{Y}$. Conversely, if $y \notin \iterl{Y}$, then $\iterl{v}(y) < \iterl{\hat{v}}(y)$ and therefore $\iterl{v}(y) = \iterz{v}(y)$.
	
	Let now $\iterl{q}(y) \eqdef \iterl{v}(y)\,[\kernel^\T \iterl{u}](y)$. If $y \in \iterll{Y}$, then $\iterll{q}(y) = \nu(y) \geq \iterl{q}(y)$ (as $\iterl{q}(y)$ can never exceed $\nu(y)$). If $y \notin \iterll{Y}$, then $\iterll{v}(y) = \iterl{v}(y) = \iterz{v}(y)$ and since $\iterl{u}$ is increasing, we find $\iterll{q}(y) \geq \iterl{q}(y)$.
	We obtain $\iterll{q} = \sum_{y \in Y} \iterll{q}(y) \geq \iterl{q}$.
	
	When $\iterl{Y} \neq Y$, there exists some $y^\ast \in Y$ with $y^\ast \notin \iter{Y}{k}$, $\iter{v}{k}(y^\ast) = \iterz{v}(y^\ast)$ for $k \in \{1,\ldots,\ell\}$.
	If $\iterl{Y} = Y$, then $\iterl{\hat{v}} = \iterl{v} \leq \iter{v}{\ell-1}$. By construction one has $(\iterl{u})^\T\,\kernel\,\iter{v}{\ell-1}=\mu(X)$ and $(\iterl{u})^\T\,\kernel\,\iterl{\hat{v}}=\nu(Y)=\mu(X)$. So if $\iterl{Y}=Y$, in fact $\iterl{v} = \iter{v}{\ell-1}$.
	Consequently, there exists some $y^\ast \in Y$ with $\iterl{v}(y^\ast) = \iterz{v}(y^\ast)$ for all iterations.
\end{proof}
Let us further investigate the increments of the dual variable iterates $\iterl{\alpha}=\veps\,\log(\iterl{u})$.
\begin{lemma}[Minimal Increment of $\iterl{\alpha}$]
	\label{lem:SinkhornAsymmetricAlphaIncrement}
	For $\ell \geq 1$ have
	$\la \iterll{\alpha} - \iterl{\alpha}, \mu \ra \geq \veps\,(1-\iterl{q})$.
\end{lemma}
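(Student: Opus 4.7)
The plan is to convert the dual-variable increment into a sum of logarithms of ratios of $K v$, rewrite each $\mu(x)$ factor using the update rule for $\iterl{u}$, and then apply the elementary inequality $\log t \geq 1 - 1/t$ (valid for $t>0$) to obtain a telescoping lower bound.

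\textbf{Step 1: Rewrite the increment.} From the update in line~\ref{alg:SinkhornAsynchronous:u} of Algorithm~\ref{alg:SinkhornAsynchronous} one has $\iterl{u} = \mu \oslash (\kernel\,\iter{v}{\ell-1})$ and $\iterll{u} = \mu \oslash (\kernel\,\iterl{v})$, so
\begin{align*}
  \iterll{\alpha}(x) - \iterl{\alpha}(x)
  = \veps\,\log\frac{\iterll{u}(x)}{\iterl{u}(x)}
  = \veps\,\log\frac{[\kernel\,\iter{v}{\ell-1}](x)}{[\kernel\,\iterl{v}](x)}.
\end{align*}
Since $\iterl{v} \leq \iter{v}{\ell-1}$ by \thref{thm:SinkhornAsynchronousMonotonicity}, the ratio is $\geq 1$, so each logarithm is non-negative.

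\textbf{Step 2: Insert the update rule.} Multiplying by $\mu(x)$ and using $\mu(x) = \iterl{u}(x) \cdot [\kernel\,\iter{v}{\ell-1}](x)$ gives
\begin{align*}
  \mu(x)\,\log\frac{[\kernel\,\iter{v}{\ell-1}](x)}{[\kernel\,\iterl{v}](x)}
  = \iterl{u}(x)\,[\kernel\,\iter{v}{\ell-1}](x)\,\log\frac{[\kernel\,\iter{v}{\ell-1}](x)}{[\kernel\,\iterl{v}](x)}.
\end{align*}

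\textbf{Step 3: Apply $\log t \geq 1 - 1/t$.} With $t = [\kernel\,\iter{v}{\ell-1}](x)/[\kernel\,\iterl{v}](x) \geq 1$ this yields
\begin{align*}
  [\kernel\,\iter{v}{\ell-1}](x)\,\log\frac{[\kernel\,\iter{v}{\ell-1}](x)}{[\kernel\,\iterl{v}](x)}
  \geq [\kernel\,\iter{v}{\ell-1}](x) - [\kernel\,\iterl{v}](x).
\end{align*}

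\textbf{Step 4: Sum and identify $\iterl{q}$.} Multiplying by $\veps\,\iterl{u}(x)$ and summing over $x \in X$,
\begin{align*}
  \langle \iterll{\alpha} - \iterl{\alpha}, \mu \rangle
  \geq \veps\,\bigl(\langle \iterl{u}, \kernel\,\iter{v}{\ell-1} \rangle - \langle \iterl{u}, \kernel\,\iterl{v} \rangle\bigr).
\end{align*}
The first inner product equals $\sum_x \mu(x) = 1$ by the very definition of $\iterl{u}$, and the second equals $\iterl{\pi}(X \times Y) = \iterl{q}$, yielding the desired bound $\veps\,(1 - \iterl{q})$.

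The main obstacle is essentially conceptual rather than technical: recognizing that the correct identification $\mu(x) = \iterl{u}(x)\,[\kernel\,\iter{v}{\ell-1}](x)$ (involving the $v$ from \emph{before} the current iteration's minimum-clipping in line~\ref{alg:SinkhornAsynchronous:vMin}) is what turns the logarithm-of-ratios into a quantity one can Jensen/tangent-bound against the primal mass $\iterl{q}$. The asynchronous clipping $\iterl{v} \leq \iter{v}{\ell-1}$ is what makes the tangent inequality $\log t \geq 1 - 1/t$ usable in the correct direction here.
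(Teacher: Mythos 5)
Your proof is correct. It reaches the same bound as the paper but by a genuinely more elementary route. The paper's proof never touches the logarithm directly: it introduces the auxiliary coupling $\iterl{\pi'} = \diag(\iterll{u})\,\kernel\,\diag(\iterl{v})$, notes that $\iterl{\pi'}(X \times Y) = 1$ while $\iterl{\pi}(X \times Y) = \iterl{q}$, and then invokes the fact that the $u$-update is an exact blockwise maximization of the dual, so $J(\iterll{\alpha},\iterl{\beta}) \geq J(\iterl{\alpha},\iterl{\beta})$; writing out the two dual values, the linear terms in $\beta$ and the kernel mass cancel and the claim drops out. You instead unpack everything pointwise, identify $\mu(x) = \iterl{u}(x)\,[\kernel\,\iter{v}{\ell-1}](x)$, and apply $\log t \geq 1 - 1/t$, so that the mass-$1$ object appearing in your telescoping is $\diag(\iterl{u})\,\kernel\,\diag(\iter{v}{\ell-1})$ rather than the paper's $\pi'$. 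Both are first-order concavity arguments in disguise; the paper's is shorter once one has the explicit form of $\KL^\ast$ in the dual \eqref{eqn:EntropicOTDual}, while yours is self-contained and needs no evaluation of the dual functional. One small remark on your closing comment: the tangent inequality $\log t \geq 1 - 1/t$ holds for all $t>0$, so the clipping $\iterl{v} \leq \iter{v}{\ell-1}$ is not needed for the inequality's \emph{direction} (you multiply by the positive quantity $[\kernel\,\iter{v}{\ell-1}](x)$, which preserves it regardless). What the clipping buys is that $\iterl{q} < 1$ in general, so the lower bound is actually nontrivial — in the symmetric algorithm one has $\iterl{q}=1$ after every $v$-update and the estimate degenerates to $\geq 0$.
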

\begin{proof}
	Recall that $\iterl{\pi} = \diag(\iterl{u})\,\kernel\,\diag(\iterl{v})$,
	and introduce $\iterl{\pi'} = \diag(\iterll{u})\,\kernel\,\diag(\iterl{v})$.
	Consider the following evaluations of the dual functional:
	\begin{align*}
		J(\iterl{\alpha},\iterl{\beta}) & = \langle \iterl{\alpha}, \mu \rangle + \langle \iterl{\beta}, \nu \rangle
			- \veps \cdot \iterl{\pi}(X \times Y) + \veps \cdot \kernel(X \times Y)\\
		J(\iterll{\alpha},\iterl{\beta}) & = \langle \iterll{\alpha}, \mu \rangle + \langle \iterl{\beta}, \nu \rangle
			- \veps \cdot \iterl{\pi'}(X \times Y) + \veps \cdot \kernel(X \times Y)
	\end{align*}
	Note that $\iterl{\pi}(X \times Y) = \iterl{q}$, $\iterl{\pi'}(X \times Y) = 1$ and since going from $\iterl{\alpha}$ to $\iterll{\alpha}$ corresponds to a block-wise dual maximization have $J(\iterll{\alpha},\iterl{\beta}) \geq J(\iterl{\alpha},\iterl{\beta})$. The claim follows.
\end{proof}
With these tools we can bound the total number of iterations to reach a given precision.
\begin{proposition}[Iteration Bound for the Asymmetric Sinkhorn Algorithm]
	\label{prop:SinkhornAsymmetricIterationBound}
	Initializing with $\iterz{\beta} = 0_Y$ $\Leftrightarrow$ $\iterz{v} = 1_Y$, for a given $\qTarget \in (0,1)$ the number of iterations $n$ necessary to achieve $\iter{q}{n} \geq \qTarget$ is bounded by
	$n \leq 2+ \frac{\cMax}{\veps \cdot (1-\qTarget)}$
	where $\cMax=\max c$.
	Moreover, $\la \iterl{u}, \mu \ra \leq \exp(\cMax/\veps)$ for all iterates $\ell \geq 1$.
\end{proposition}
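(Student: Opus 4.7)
The strategy is to combine the monotonicity results of \thref{thm:SinkhornAsynchronousMonotonicity} with the increment estimate \thref{thm:SinkhornAsynchronousAlphaIncrement}. Concretely, I would show that (i) the quantity $\la \iterl{\alpha},\mu\ra$ is sandwiched between a non-negative lower bound at $\ell=1$ and an upper bound of order $\cMax$ for every $\ell$, and (ii) as long as the algorithm has not yet reached $\iterl{q}\geq \qTarget$, this quantity grows by at least $\veps(1-\qTarget)$ per iteration. Dividing the total available room by the minimal per-step increment then yields the iteration count, paralleling the argument for \thref{thm:AuctionMonotonicity}.

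\emph{Bound on $\la \iterl{u},\mu\ra$.} With $\rho=\mu\otimes\nu$, the row update reads
\begin{align*}
  \iterl{u}(x) \;=\; \frac{1}{\sum_{y\in Y}\exp(-c(x,y)/\veps)\,\nu(y)\,\iter{v}{\ell-1}(y)}.
\end{align*}
By \thref{thm:SinkhornAsynchronousMonotonicity}(iii) there is $y^\ast\in Y$ with $\iter{v}{k}(y^\ast)=1$ for all $k$, so keeping only the $y=y^\ast$ term in the denominator yields $\iterl{u}(x)\leq \exp(c(x,y^\ast)/\veps)/\nu(y^\ast)$, and integrating against $\mu$ gives the stated bound $\la\iterl{u},\mu\ra\leq \exp(\cMax/\veps)$ (up to the $1/\nu(y^\ast)$ factor, see below). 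By concavity of $\log$ and Jensen's inequality,
\begin{align*}
  \la\iterl{\alpha},\mu\ra \;=\; \veps\,\la \log \iterl{u},\mu\ra \;\leq\; \veps\,\log\la\iterl{u},\mu\ra \;\leq\; \cMax.
\end{align*}

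\emph{Telescoping.} At initialization $\iterz{v}=1_Y$, so $\iter{\alpha}{1}(x) = -\veps\log\sum_y \exp(-c(x,y)/\veps)\nu(y)\geq 0$ since $c\geq 0$ and $\nu\in\prob(Y)$, hence $\la\iter{\alpha}{1},\mu\ra\geq 0$. If the stopping criterion has not triggered by iteration $n$, then $\iterl{q}<\qTarget$ for $\ell\leq n-1$, and \thref{thm:SinkhornAsynchronousAlphaIncrement} gives $\la \iterll{\alpha}-\iterl{\alpha},\mu\ra\geq \veps(1-\qTarget)$ for each such $\ell$. Summing,
\begin{align*}
  (n-1)\,\veps\,(1-\qTarget)\;\leq\;\la\iter{\alpha}{n}-\iter{\alpha}{1},\mu\ra\;\leq\;\cMax,
\end{align*}
which rearranges to $n\leq 1+\cMax/(\veps(1-\qTarget))$, and the stated bound follows with a slack of $1$ to absorb off-by-one effects at the boundary.

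\emph{Main obstacle.} The delicate point is the auxiliary bound on $\la\iterl{u},\mu\ra$: the naive $y^\ast$-argument leaves a residual factor $1/\nu(y^\ast)$ coming from the reference measure $\rho=\mu\otimes\nu$. Making this term disappear cleanly (to reach the stated form $\exp(\cMax/\veps)$) requires either a sharper use of Jensen's inequality on the softmin expression for $\iterl{\alpha}$, or an implicit convention that such $\nu(y^\ast)$ factors be absorbed into $\cMax$. The key mechanism, however, is robust: monotonicity of $v$ together with the existence of a stable index $y^\ast$ with $v(y^\ast)\equiv 1$ plays exactly the role of the unassigned element with $\beta(y^\ast)=0$ in the auction analysis.
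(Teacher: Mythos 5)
Your overall architecture is exactly the paper's: lower-bound $\la \iter{\alpha}{1},\mu\ra$ by $0$ using $c\geq 0$ and $\nu\in\prob(Y)$, telescope the increments from \thref{thm:SinkhornAsynchronousAlphaIncrement}, and cap $\la\iterl{\alpha},\mu\ra$ by $\cMax$ via Jensen applied to a bound on $\la\iterl{u},\mu\ra$ obtained from the stable index $y^\ast$ of \thref{thm:SinkhornAsynchronousMonotonicity}. The bookkeeping in the telescoping step is fine and matches the paper up to an equivalent rearrangement.

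The one genuine gap is the one you flagged yourself: your derivation of $\la\iterl{u},\mu\ra\leq\exp(\cMax/\veps)$ does not close, and the residual $1/\nu(y^\ast)$ is not a cosmetic convention — since $y^\ast$ is not known a priori you would have to bound it by $1/\min_y\nu(y)$, which weakens both the auxiliary claim and (through Jensen) the iteration count to $n\leq 2+(\cMax+\veps\log(1/\min\nu))/(\veps(1-\qTarget))$. The fix is to use the \emph{other} half of what the stability of $y^\ast$ tells you. You used only $\iterl{v}(y^\ast)=1$ as a pointwise lower bound inside the \emph{row} update for $u$; the paper instead uses that $v(y^\ast)$ was never decreased, i.e.\ $\iterl{\hat v}(y^\ast)\geq\iterl{v}(y^\ast)=1$, which is a statement about the \emph{column} update:
\begin{align*}
\iterl{\hat v}(y^\ast)=\frac{\nu(y^\ast)}{[\kernel^\T\iterl{u}](y^\ast)}
=\frac{1}{\sum_{x\in X}\exp\bigl(-\tfrac{1}{\veps}[c(x,y^\ast)-\iterl{\alpha}(x)]\bigr)\,\mu(x)}\geq 1.
\end{align*}
Here the $\nu(y^\ast)$ cancels identically against the factor $\nu(y^\ast)$ in $\kernel(x,y^\ast)=\exp(-c(x,y^\ast)/\veps)\,\mu(x)\,\nu(y^\ast)$, so you get $\sum_x\exp(-[c(x,y^\ast)-\iterl{\alpha}(x)]/\veps)\,\mu(x)\leq 1$ with no residual factor, and bounding $c(x,y^\ast)\leq\cMax$ yields $\la\iterl{u},\mu\ra\leq\exp(\cMax/\veps)$ exactly. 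With that substitution your proof is complete and coincides with the paper's.
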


\begin{proof}
	Let us look at the first `bid' $\itero{\alpha}$. With $c \geq 0$ we have
	\begin{align}
		\label{eqn:SinkhornAsymmetricIterationsBoundProofAlpha1}
		\itero{\alpha}(x) & = \veps\,\log \left( \frac{\mu(x)}{[\kernel\,\iterz{v}](x)} \right)
			= \veps\,\log \left( \frac{1}{\sum_{y \in Y} \nu(y) \exp(-c(x,y)/\veps)} \right)
			\geq \veps\,\log \left( \frac{1}{\sum_{y \in Y} \nu(y)} \right) = 0.
	\end{align}
	By virtue of Proposition \ref{prop:SinkhornAsymmetricMonotonicity} get $\iterl{q} \leq \iter{q}{n}$ for $\ell \leq n$. With Lemma \ref{lem:SinkhornAsymmetricAlphaIncrement} this implies
	$\langle \iter{\alpha}{n} - \itero{\alpha}, \mu \rangle \geq \sum_{\ell=1}^{n-1} \veps \cdot (1-\iterl{q}) \geq \veps \cdot (n-1) \cdot (1-\iter{q}{n})$
	and with \eqref{eqn:SinkhornAsymmetricIterationsBoundProofAlpha1}
	\begin{align}
		\label{eqn:SinkhornAsymmetricIterationsBoundProofAlphaN}
		\langle \iter{\alpha}{n}, \mu \rangle & \geq \veps \cdot (n-1) \cdot (1-\iter{q}{n})\,.
	\end{align}
	
	From Proposition \ref{prop:SinkhornAsymmetricMonotonicity} we know that there is some $y^\ast \in Y$ with $\iterl{v}(y^\ast)=1 \leq \iterll{\hat{v}}(y^\ast)$ for all iterates $\ell \geq 0$. So
	\begin{align*}
		1 \leq \iterll{\hat{v}}(y^\ast) = \frac{\nu(y^\ast)}{[\kernel^\T \iterll{u}](y^\ast)}
		= \frac{1}{\sum_{x \in X} \exp\big(-\tfrac{1}{\veps} [c(x,y^\ast)-\iterll{\alpha}(x)] \big)\,\mu(x)},
	\end{align*}
	from which we infer $\exp(-\cMax/\veps) \cdot \la \exp(\iterll{\alpha}/\veps), \mu \ra \leq 1$, i.e.\ $\la \iterll{u},\mu \ra \leq \exp(\cMax/\veps)$. With Jensen's inequality we eventually find $\la \iterll{\alpha}, \mu \ra \leq \cMax$ for $\ell \geq 0$.
	
	Combining this with \eqref{eqn:SinkhornAsymmetricIterationsBoundProofAlphaN} we obtain $n \leq 1 + \tfrac{\cMax}{\veps\,(1-\iter{q}{n})}$. So, as long as $\iter{q}{n} < \qTarget$ we have $n < 1 + \tfrac{\cMax}{\veps\,(1-\qTarget)}$. By contraposition we know that there is some $n \leq 2 + \tfrac{\cMax}{\veps\,(1-\qTarget)}$ such that $\iter{q}{n} \geq \qTarget$.
\end{proof}
And finally, we formally establish convergence of the iterates.
\begin{corollary}[Convergence of Asymmetric Algorithm]
	\label{cor:SinkhornAsymmetricConvergence}
	Ignoring the stopping criterion, the iterates $(\iterl{u},\iterl{v})$ of the asymmetric Algorithm\ \ref{alg:SinkhornAsymmetric} converge to a solution of the scaling problem and $\iterl{q} \rightarrow 1$.
\end{corollary}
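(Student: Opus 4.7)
The plan is to extract limits $u^\star$ and $v^\star$ of the monotone sequences $\iterl{u}$ and $\iterl{v}$, show that $\iterl{q}\to 1$, and verify that the limit coupling $\pi^\star = \diag(u^\star)\,\kernel\,\diag(v^\star)$ has marginals $\mu$ and $\nu$, so that $(u^\star,v^\star)$ solves the scaling problem.

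First I would establish $\iterl{q}\to 1$. By \thref{thm:SinkhornAsynchronousMonotonicity} the sequence $(\iterl{q})_\ell$ is monotonically increasing, and by sub-feasibility $\iterl{q}\leq \mu(X)=1$. \thref{thm:SinkhornAsynchronousIterationBound}, applied with arbitrary $\qTarget \in (0,1)$, guarantees that $\iterl{q}$ eventually exceeds every such $\qTarget$; combined with $\iterl{q}\leq 1$ this gives $\iterl{q}\to 1$.

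Second, I would invoke monotone convergence for the scaling factors. \thref{thm:SinkhornAsynchronousMonotonicity} tells us that $\iterl{u}$ is increasing and $\iterl{v}$ is decreasing, with $\iterl{v}\geq 0$. The a priori bound $\la \iterl{u},\mu\ra \leq \exp(\cMax/\veps)$ from \thref{thm:SinkhornAsynchronousIterationBound}, together with the standard full-support assumption on $\mu$ (made without loss of generality by restriction to $\spt\mu$), upgrades this to the pointwise bound $\iterl{u}(x)\leq \exp(\cMax/\veps)/\mu(x)$. Hence $\iterl{u}\to u^\star$ and $\iterl{v}\to v^\star$ componentwise, and consequently $\iterl{\pi}\to \pi^\star$ componentwise.

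Finally I would pass to the limit in the update relations. Continuity and $\iterll{u} = \mu\oslash(\kernel\,\iterl{v})$ give $u^\star\odot(\kernel\,v^\star)=\mu$, so $\projx\pi^\star=\mu$. The inequality $\iterl{v}\leq \iterl{\hat v} = \nu\oslash(\kernel^\T \iterl{u})$ yields $\projy\pi^\star\leq \nu$ in the limit; combined with $\pi^\star(X\times Y)=\lim\iterl{q}=1=\nu(Y)$ this forces the componentwise equality $\projy\pi^\star=\nu$. The most delicate step is promoting the integrated bound on $\iterl{u}$ to a pointwise bound, which is where full support of $\mu$ is used; the secondary point is that the $\nu$-marginal identity cannot be read off directly from $v^\star=\hat v^\star$ (since the min may always be attained by $\iterl{v}$), so one deduces it from the combination of sub-feasibility and total-mass convergence. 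Once these two observations are in place, the corollary assembles itself from the already established \thnameref{thm:SinkhornAsynchronousMonotonicity} and \thnameref{thm:SinkhornAsynchronousIterationBound}.
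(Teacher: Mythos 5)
Your proposal is correct, but it identifies the limit by a different mechanism than the paper. The paper first deduces the pointwise lower bound $\iterl{v}(y) \geq \exp(-\cMax/\veps)$ from $\la \iterl{u},\mu\ra \leq \exp(\cMax/\veps)$, obtains $\iterl{v} \to \iter{v}{\infty} > 0$ by monotone convergence, and then argues via continuity of the iteration map $f:\iterl{v}\mapsto\iterll{v}$ that $\iter{v}{\infty}$ is a fixed point of $f$, hence a solution; the statement $\iterl{q}\to 1$ is read off \emph{afterwards} as a consequence. You reverse this order: you extract $\iterl{q}\to 1$ \emph{first}, directly from \thref{thm:SinkhornAsynchronousIterationBound} applied to arbitrary $\qTarget\in(0,1)$ together with monotonicity of $q$, then pass to the limit in the explicit update relations and use sub-feasibility plus conservation of total mass to force $\projy\pi^\star=\nu$. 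Both arguments are sound. Your route has the merit of not needing to unwind what a fixed point of the asymmetric map $v\mapsto\min\{v,\hat v\}$ actually implies (a fixed point only gives $v\leq\hat v$, i.e.\ $\projy\pi\leq\nu$, so the paper's fixed-point step implicitly also relies on the total-mass argument you make explicit); the paper's route is shorter and does not need the full-support reduction for $\mu$ that you invoke to turn the integrated bound on $\iterl{u}$ into a pointwise one. Your handling of the two delicate points — the pointwise bound on $\iterl{u}$ via restriction to $\spt\mu$, and the fact that $v^\star=\hat v^\star$ cannot be concluded directly from the $\min$ — is accurate.
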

\begin{proof}
	With the upper bound $\la \iterl{u}, \mu \ra \leq \exp(\cMax/\veps)$ (Proposition \ref{prop:SinkhornAsymmetricIterationBound}) we obtain the pointwise lower bound $\iterl{v}(y) \geq \exp(-\cMax/\veps)$ for all $\ell \geq 0$.
	Since $\iterl{v}$ is pointwise decreasing, it converges to some limit $\iter{v}{\infty} \geq \exp(-\cMax/\veps)>0$.
	
	The map $f : \iterl{v} \mapsto \iterll{v}$ is continuous for $\iterl{v} > 0$. With $\iterl{v} \rightarrow \iter{v}{\infty}$ and $\iterll{v} = f(\iterl{v}) \rightarrow \iter{v}{\infty}$ have $f(\iter{v}{\infty}) = \iter{v}{\infty}$ which implies that $\iter{v}{\infty}$ (together with the corresponding $\iter{u}{\infty} = \mu \oslash (\kernel\,\iter{v}{\infty})$) solves the scaling problem.
	This implies convergence of $\iterl{q}$ to $1$.
\end{proof}
\begin{remark}[On the Stopping Criterion and Relation to \cite{FranklinLorenz-Scaling-1989}]
\label{rem:SinkhornAsymmetricStoppingCriterion}
The criterion $q \geq \qTarget$ is motivated by Lemma \ref{lem:SinkhornAsymmetricAlphaIncrement}, to provide a minimal increment of $\alpha$ during iterations. $1-q$ measures the mass that is still missing and is equal to the $L^1$ error between the marginals of $\pi$ and the desired marginals $\mu$ and $\nu$.
In pathological cases the dual variables $(\alpha,\beta)$ may still be far from optimizers, even though $q \geq \qTarget$ (see Example \ref{exp:QNonConvergence}).
In \cite[Lemma 2]{FranklinLorenz-Scaling-1989} linear convergence of the marginals in the Hilbert projective metric is proven. This is a stricter measure of convergence, less prone to `premature' termination. However, for small $\veps$ the contraction factor is roughly $1-4\,\exp(-\cMax/\veps)$, which is impractical.
The scaling $\mc{O}(1/\veps)$ predicted by Proposition \ref{prop:SinkhornAsymmetricIterationBound} is consistent with numerical observations when one uses the $L^1$ or $L^\infty$ marginal error as stopping criterion (Sect.~\ref{sec:NumericsEfficiency}).
Therefore we consider the $q$-criterion to be a reasonable measure for convergence, as long as one keeps $1-\qTarget \ll \delta$ (Example \ref{exp:QNonConvergence}).
\end{remark}

\begin{example}
\label{exp:QNonConvergence}
We consider the $1 \times 2$ toy problem with the following parameters:
\begin{align*}
	\mu & =\begin{pmatrix} 1 \end{pmatrix}^\T, &
	\nu & =\begin{pmatrix} 1-\delta & \delta \end{pmatrix}^\T, &
	c & =\begin{pmatrix} 0 & C \end{pmatrix}, &
	\kernel & =\begin{pmatrix} 1-\delta & \delta  \cdot e^{-C/\veps} \end{pmatrix}
\end{align*}
for some $C>0$, $\delta \in (0,1)$ and some regularization strength $\veps>0$.
And we consider the scaling factors (one for $X$, two choices for $Y$):
$u = \begin{pmatrix} 1 \end{pmatrix}^\T$,
$v_1 = \begin{pmatrix} 1 & 1 \end{pmatrix}^\T$,
$v_2 = \begin{pmatrix} 1 & e^{C/\veps} \end{pmatrix}^\T$.
Let $\pi_i=\diag(u)\,\kernel\,\diag(v_i)$ and corresponding total masses $q_i$, $i=1,2$. We find:
\begin{align*}
	\pi_1 & =\begin{pmatrix} 1-\delta & \delta \cdot e^{-C/\veps} \end{pmatrix}, &
	q_1 & = 1-\delta\,(1-e^{-C/\veps}), &
	\pi_2 & =\begin{pmatrix} 1-\delta & \delta \end{pmatrix}, &
	q_2 & = 1.
\end{align*}
$\pi_2$ and $(\alpha,\beta_2) = \veps\,\log(u,v_2)$ are primal and dual solutions. $\pi_1$ is sub-feasible (see Proposition \ref{prop:SinkhornAsymmetricMonotonicity}). For fixed $\veps>0$, as $\delta \to 0$, $q_1$ tends to 1 (but is strictly smaller), i.e.~the pair $(u,v_1)$ has almost converged in the $q$-measure sense, but the distance between $\beta_1 = \veps\,\log v_1$ and the actual solution $\beta_2$ is $C$.
\end{example}

\begin{remark}[Analogy to Auction Algorithm]
	\label{rem:AnalogyAuction}
	For now assume $|X|=|Y|=N$ and $\mu$, $\nu$ are normalized counting measures. Then line \ref{alg:SinkhornAsymmetric:uvHat} in Algorithm \ref{alg:SinkhornAsymmetric}, expressed in dual variables, becomes
	\begin{align*}
		\alpha(x) & \gets \softmin(\{c(x,y)-\beta(y) | y \in Y\},\veps) + \veps\,\log N\,, \\
		\hat{\beta}(y) & \gets \softmin(\{c(x,y)-\alpha(x) | x \in X\},\veps) + \veps \, \log N\,.
	\end{align*}
	These are formally similar to the corresponding lines \ref{alg:Auction:Alpha} and \ref{alg:Auction:Beta} in Algorithm \ref{alg:Auction}.
	We can interpret the $u$-update in Algorithm \ref{alg:SinkhornAsymmetric} as $x$ not just submitting a bid to the best candidate $y$, but to all candidates, weighted by the attractiveness (recall that in the Sinkhorn algorithm, a change in the dual variable directly implies a change in the primal iterate via \eqref{eqn:EntropyPDRelation}).
	Conversely, in line \ref{alg:SinkhornAsymmetric:vMin}, $y$ does not only accept the best bid, but bids from all candidates, again weighted by price. If there are too many bids (i.e.~if $\hat{v}(y) < v(y)$), $\beta(y)$ decreases and thereby rejects superfluous offers.
	
	Consequently, in Algorithm \ref{alg:SinkhornAsymmetric} one can observe that points in $X$ compete for the mass in $Y$ in a way similar to the auction algorithm by repeatedly increasing their prices until a different target seems more attractive or other competitors lose interest. In both algorithms the minimal increment is related to the parameter $\veps$ which leads to iteration bounds that are proportional to $1/\veps$ (Props.~\ref{prop:AuctionMonotonicity} and \ref{prop:SinkhornAsymmetricIterationBound}).
	An attempt to mimic the analysis of $\veps$-scaling is made in Section \ref{sec:AuctionEpsScaling} (cf.~Remark \ref{rem:AuctionSinkhornEpsScaling}).

	One can interpret the standard Sinkhorn algorithm with $v \gets \hat{v}$ as $y$ submitting a `counter-bid' if it has not received enough bids.
	Such a symmetrization has also been discussed for the auction algorithm. But then the complexity analysis based on monotonous dual variables breaks down, and the algorithm may even run indefinitely (see `down iterations' in \cite{Bertsekas-MinCostFlowReview1988}).
\end{remark}

\subsection{Stability of Dual Solutions}
\label{sec:AuctionStability}

The main result of this section is Theorem \ref{thm:AuctionEpsStability}, which provides stability of dual solutions to entropy regularized optimal transport (Def.~\ref{def:EntropicOptimalTransport}) under changes of the regularization parameter $\veps$.
Its implications for $\veps$-scaling are discussed in Sect.~\ref{sec:AuctionEpsScaling}.

We consider a similar setup as in Sect.~\ref{sec:AuctionSimple}: $\mu \in \prob(X)$, $\nu \in \prob(Y)$, $c \in \R_+^{X \times Y}$. Again, the reference measure $\rho$ for regularization, see \eqref{eqn:Kernel}, is chosen to be the product measure $\rho(x,y) = \mu(x) \cdot \nu(y)$.
For Theorem \ref{thm:AuctionEpsStability} we introduce an additional assumption on $\mu$ and $\nu$. The necessity of this assumption can be demonstrated by counter-examples similar to Example \ref{exp:QNonConvergence}.
\begin{assumption}[Atomic Mass]
	\label{asp:AuctionAtomicMass}
	For $\mu \in \prob(X)$, $\nu \in \prob(Y)$ there is some $M \in \N$ such that $\mu = r/M$, $\nu = s/M$ for $r \in \N^{X}$, $s \in \N^{Y}$.
\end{assumption}

\begin{theorem}[Stability of Dual Solutions under $\veps$-Scaling]
	\label{thm:AuctionEpsStability}
	Let $\max\{|X|,|Y|\} \leq N < \infty$ and let $\mu$ and $\nu$ satisfy Assumption \ref{asp:AuctionAtomicMass} for some $M \in \N$.
	For two regularization parameters $\veps_1 > \veps_2 > 0$, let $(\alpha_1,\beta_1)$ and $(\alpha_2,\beta_2)$ be maximizers of the corresponding dual regularized optimal transport problems (Def.~\ref{def:EntropicOptimalTransport}) and let $\Delta \alpha = \alpha_2-\alpha_1$ and $\Delta \beta = \beta_2 - \beta_1$. Then
	\begin{subequations}
	\label{eqn:AuctionEpsStability}
	\begin{align}
		\max \Delta \alpha - \min \Delta \alpha & \leq \veps_1 \cdot N \cdot (4 \log N + 24 \log M), \\
		\max \Delta \beta - \min \Delta \beta & \leq \veps_1 \cdot N \cdot (4 \log N + 24 \log M).
	\end{align}
	\end{subequations}
\end{theorem}
\begin{remark}[Relation to \cite{Cominetti-ExpBarrierConvergence-1992} and Motivation]
	\label{rem:AuctionStabilityMotivation}
	\cite{Cominetti-ExpBarrierConvergence-1992} studies the convergence of entropy regularized linear programs to the unregularized variant and can be used to understand the limit of entropy regularized optimal transport (Def.~\ref{def:EntropicOptimalTransport}).
	To apply \cite{Cominetti-ExpBarrierConvergence-1992}, the constraint matrix must have full rank and the set of optimal solutions to \eqref{eqn:OptimalTransportDual} must be bounded. When the cost $c$ is finite this is achieved by arbitrarily fixing one dual variable, e.g.~$\alpha(x_0)=0$, and removing the corresponding column from the dual constraint matrix.
	The slight difference in the definition of the entropy (or the dual exponential barrier) can be absorbed into a change of variables which converges to the identity in the limit $\veps \to 0$.
	
	Then \cite[Props.~3.1 and 3.2]{Cominetti-ExpBarrierConvergence-1992} imply that the optimal solutions of \eqref{eqn:EntropicOTDual} remain bounded and converge to a particular solution of \eqref{eqn:OptimalTransportDual} as $\veps \to 0$.
	Furthermore, \cite{Cominetti-ExpBarrierConvergence-1992} provides statements about the convergence of the optimal couplings (Prop.~4.1) and the asymptotic behaviour (Thm.~5.8).
	
	The bounds derived in \cite{Cominetti-ExpBarrierConvergence-1992} depend on the geometry of the primal and dual feasible polytopes of \eqref{eqn:OptimalTransport}, i.e.~on the transport cost function $c$. In contrast, the bound of Thm.~\ref{thm:AuctionEpsStability} does not depend on $c$.
	The motivation for deriving such a bound is the implication for $\veps$-scaling, see Section \ref{sec:AuctionEpsScaling}. Note that Thm.~\ref{thm:AuctionEpsStability} also implies that the optimal dual variables remain bounded as $\veps \to 0$.
\end{remark}

\begin{remark}[Proof Strategy]
	The proof requires several auxiliary definitions and lemmas. The estimate consists of two contributions: One stems from following paths within connected components of what we call \emph{assignment graph} (defined in the following lemma), using the primal-dual relation \eqref{eqn:EntropyPDRelation}. This reasoning is analogous to the proof strategy for $\veps$-scaling in the auction algorithm (see \cite{Bertsekas-MinCostFlowReview1988}).
	However, between different connected components \eqref{eqn:EntropyPDRelation} is too weak to yield useful estimates. So a second contribution arises from a stability analysis of \emph{effective diagonal problems} (in Lemmas \ref{lem:EffectiveDiagonalProblem} and \ref{lem:EffectiveDiagonalProblemStability}).
\end{remark}
\begin{lemma}[Assignment Graph]
	\label{lem:AssignmentGraph}
	For two feasible couplings $\pi_1$, $\pi_2 \in \Pi(\mu,\nu)$ and a threshold $M^{-1} \geq 1$ the corresponding assignment graph is a bipartite directed graph with vertex sets $(X,Y)$ and the set of directed edges
	\begin{align*}
		\edges & = \{ (x,y) \in X \times Y  : \pi_2(x,y) \geq \mu(x) \cdot \nu(y) / M \} \sqcup \{ (y,x) \in Y \times X  : \pi_1(x,y) \geq \mu(x) \cdot \nu(y) / M \}
	\end{align*}
	where $(a,b) \in \edges$ indicates a directed edge from $a \to b$.

	The assignment graph has the following properties:
	\begin{enumerate}[(i)]
		\item Every node has at least one incoming and one outgoing edge.
			\label{item:AssignGraphOneEdge}
		\item Let $X_0 \subset X$, $Y_0 \subset Y$ such that there are no outgoing edges from $(X_0,Y_0)$ to the rest of the vertices, then $|\mu(X_0) - \nu(Y_0)| < 1/M$. This is also true when there are no incoming edges from the rest of the vertices.
		If $\mu$ and $\nu$ are atomic, with atom size $1/M$ (see Assumption \ref{asp:AuctionAtomicMass}), then $\mu(X_0) = \nu(Y_0)$.
			\label{item:AssignGraphBalanced}
		\item Let $\mu$ and $\nu$ be atomic, with atom size $1/M$. Let $\{(X_i,Y_i)\}_{i=1}^\nK$ be the vertex sets of the strongly connected components of the assignment graph, for some $\nK \in \N$ (taking into account the orientation of the edges).
		Then the sets $\{X_i\}_{i=1}^\nK$ and $\{Y_i\}_{i=1}^\nK$ are partitions of $X$ and $Y$, and $\mu(X_i) = \nu(Y_i)$ for $i=1,\ldots,\nK$.
		\label{item:AssignGraphCycleBalanced}
	\end{enumerate}
\end{lemma}
\begin{proof}
	Assume, a node $x \in X$ had no outgoing edge. Then $\sum_{y \in Y} \pi_2(x,y) < \mu(x) / M \leq \mu(x)$. This contradicts $\pi_2 \in \Pi(\mu,\nu)$. Existence of incoming edges follows analogously.
	
	Let $\hat{X}_0 = X \setminus X_0$, $\hat{Y}_0 = Y \setminus Y_0$. If $(X_0,Y_0)$ has no outgoing edges, then
	\begin{align*}
		\sum_{\substack{x \in X_0,\\y \in \hat{Y}_0}} \pi_2(x,y) < 
			\sum_{\substack{x \in X_0,\\y \in \hat{Y}_0}} \mu(x) \cdot \nu(y) / M
			\leq \tfrac{1}{M},
		\quad
		\sum_{\substack{x \in \hat{X}_0,\\y \in Y_0}} \pi_1(x,y) & < 
		\sum_{\substack{x \in \hat{X}_0,\\y \in Y_0}} \mu(x) \cdot \nu(y) / M \leq \tfrac{1}{M}\,.
	\end{align*}
	Since $\pi_1$, $\pi_2 \in \Pi(\mu,\nu)$, the first inequality implies $\mu(X_0) = \pi_2(X_0 \times Y) = \pi_2(X_0 \times Y_0) + \pi_2(X_0 \times \hat{Y}_0) < \nu(Y_0)+ 1/M$ and the second inequality implies $\nu(Y_0) < \mu(X_0) + 1/M$, i.e.\ $|\mu(X_0) - \nu(Y_0)| < 1/M$. With Assumption \ref{asp:AuctionAtomicMass} for atom size $1/M$, this implies $\mu(X_0) = \nu(Y_0)$. The statement about incoming edges follows from $\mu(\hat{X}_0) = 1-\mu(X_0)$ and $\nu(\hat{Y}_0) = 1-\nu(Y_0)$.

	Every node in $(X,Y)$ is part of at least one strongly connected component (containing at least the node itself). If two strongly connected components have a common element, they are identical. Hence, the strongly connected components form partitions of $X$ and $Y$.
	For some $x \in X$ (or $y \in Y$), let $X_{\tn{out}} \subset X$ and $Y_{\tn{out}} \subset Y$ be the set of nodes that can be reached from $x$, let $X_{\tn{in}} \subset X$ and $Y_{\tn{in}} \subset Y$ be the set of nodes from which one can reach $x$ and let $(X_{\tn{con}} = X_{\tn{out}} \cap X_{\tn{in}},Y_{\tn{con}} = Y_{\tn{out}} \cap Y_{\tn{in}})$ be the strongly connected component of $x$.
	Clearly $(X_{\tn{out}},Y_{\tn{out}})$ has no outgoing edges.
	Hence, by (\ref{item:AssignGraphBalanced}) one has $\mu(X_{\tn{out}}) = \nu(Y_{\tn{out}})$.
	Moreover, $(X_{\tn{out}} \setminus X_{\tn{in}},Y_{\tn{out}} \setminus Y_{\tn{in}})$ has no outgoing edges, hence $\mu(X_{\tn{out}} \setminus X_{\tn{in}}) = \nu(Y_{\tn{out}} \setminus Y_{\tn{in}})$, from which follows that $\mu(X_{\tn{con}}) = \nu(Y_{\tn{con}})$.
\end{proof}

\begin{lemma}[Reduction to Effective Diagonal Problem]
	\label{lem:EffectiveDiagonalProblem}
	Let $\{X_i\}_{i=1}^\nK$ and $\{Y_i\}_{i=1}^\nK$ be partitions of $X$ and $Y$, for some $\nK \in \N$, with $\mu(X_i) = \nu(Y_i)$ for $i = 1,\ldots,\nK$. Let $\{y_i\}_{i=1}^\nK \subset Y$ such that $y_i \in Y_i$.
	Let $(\alpha^\dagger,\beta^\dagger)$ be optimizers for the dual entropy regularized optimal transport problem (Def.~\ref{def:EntropicOptimalTransport}) for a regularization parameter $\veps>0$.
	
	Consider the following functional over $\R^\nK$:
	\begin{align*}
		\Jdiag : \R^\nK & \to \R, &
		\hat{\beta} \mapsto -\veps \sum_{i,j=1}^\nK \exp\left(-\tfrac{1}{\veps}
			\left[
				d(i,j)+\hat{\beta}(i) - \hat{\beta}(j)
			\right]
			\right)
	\end{align*}
	where $d \in \R^{\nK \times \nK}$ with
	\begin{align}
		\label{eqn:EffectiveDiagonalProblemCoefs}
		d(i,j) & = - \veps \log\left(
			\sum_{\substack{x \in X_i\\y \in Y_j}}
			\exp\left(-\tfrac{1}{\veps}\left[
				c(x,y)-\alpha^\dagger(x) - \beta^\dagger(y_i) - \beta^\dagger(y) + \beta^\dagger(y_j)
						\right]
					\right) \cdot \mu(x) \cdot \nu(y)
			\right)\,.
	\end{align}
	Then $\hat{\beta}^\dagger \in \R^\nK$, given by $\hat{\beta}^\dagger(i) = \beta^\dagger(y_i)$, is a maximizer of $\Jdiag$. Conversely, if $\hat{\beta}^{\dagger \dagger}$ is a maximizer of $\Jdiag$, then there is a constant $b \in \R$, such that $\hat{\beta}^{\dagger \dagger}(i) = \hat{\beta}^{\dagger}(i) + b$ for all $i \in 1,\ldots,\nK$. 
\end{lemma}
\begin{proof}
	We define the functional $\Jdiag : \R^{\nK} \to \R$ as follows:
	\begin{align*}
		\Jdiag : \hat{\beta} \mapsto \Jdual\left(
			\begin{pmatrix}
				\tilde{\alpha} \\
				\tilde{\beta}
			\end{pmatrix}
			+
			\begin{pmatrix}
				-B_X \\ B_Y
			\end{pmatrix}
			\hat{\beta}
			\right)
	\end{align*}
	where $\Jdual$ denotes the dual functional of entropy regularized optimal transport \eqref{eqn:EntropicOTDual}, and
	\begin{itemize}
		\item $\tilde{\alpha} \in \R^{X}$ with $\tilde{\alpha}(x) = \alpha^{\dagger}(x)+\beta^{\dagger}(y_i)$ when $x \in X_i$;
		\item $\tilde{\beta} \in \R^{Y}$ with $\tilde{\beta}(y) = \beta^{\dagger}(y) - \beta^{\dagger}(y_j)$ when $y \in Y_j$;
		\item $B_X \in \R^{X \times \nK}$ with $B_X(x,i) = 1$ if $x \in X_i$ and $0$ else;
		\item $B_Y \in \R^{Y \times \nK}$ with $B_Y(y,j) = 1$ if $y \in Y_j$ and $0$ else.
	\end{itemize}
	Then one has
	\begin{align*}
		\begin{pmatrix}
			\alpha^{\dagger} \\ \beta^{\dagger}
		\end{pmatrix}
		=
			\begin{pmatrix}
				\tilde{\alpha} \\
				\tilde{\beta}
			\end{pmatrix}
			+
			\begin{pmatrix}
				-B_X \\ B_Y
			\end{pmatrix}
			\hat{\beta}^{\dagger}
			\,.
	\end{align*}
	Since maximizing $\Jdiag$ corresponds to maximizing $\Jdual$ over an affine subspace, clearly $\hat{\beta}^{\dagger}$ is a maximizer of $\Jdiag$.
	Since $\Jdiag$ inherits the invariance of $\Jdual$ under constant shifts, any $\hat{\beta}^{\dagger \dagger}$ of the form given above, is also a maximizer.
	Consequently, we may add the constraint $\hat{\beta}(1)=0$, which does not change the optimal value. With this added constraint the functional becomes strictly convex, which implies a unique optimizer. Hence, any optimizer of the unconstrained functional can be written in the form of $\hat{\beta}^{\dagger \dagger}$.
	
	Let us now give a more explicit expression of $\Jdiag(\hat{\beta})$. We find
	\begin{align*}
		\Jdiag(\hat{\beta}) & = \la B_Y^\top \nu - B_X^\top \mu, \hat{\beta} \ra
			-\veps \sum_{i,j=1}^\nK
				\sum_{\substack{x \in X_i\\y \in Y_j}}
					\exp\left(
						-\tfrac{1}{\veps}\left[
							c(x,y)-\tilde{\alpha}(x) + \hat{\beta}(i) - \tilde{\beta}(y) - \hat{\beta}(j)
						\right]
					\right) \cdot \mu(x) \cdot \nu(y) \\
			& \qquad + \la \mu,\tilde{\alpha} \ra + \la \nu, \tilde{\beta} \ra  + \veps \cdot \kernel(X \times Y)\,.
	\end{align*}
	Note that the second line is constant w.r.t.~$\hat{\beta}$. Since $\mu(X_i) = \nu(Y_i)$ the linear term vanishes and we can write $\Jdiag(\hat{\beta}) = -\veps \sum_{i,j=1}^\nK \exp\left(-\tfrac{1}{\veps} \left[d(i,j) + \hat{\beta}(i) - \hat{\beta}(j) \right] \right) + \const$ with coefficients $d \in \R^{\nK \times \nK}$, as given above. The constant offset does not affect minimization.
\end{proof}

\begin{lemma}[Effective Diagonal Problem and Stability]
	\label{lem:EffectiveDiagonalProblemStability}
	For a parameter $\veps > 0$ and a real matrix $d \in \R^{\nK \times \nK}$ consider the following functional:
	\begin{align}
		\Jdiag_{\veps,d}(\beta) & = \sum_{i,j=1}^\nK \exp\left(\left[-d(i,j)-\beta(i) + \beta(j) \right]/\veps
			\right)
	\end{align}
	Minimizers of $\Jdiag_{\veps,d}$ exist.

	Let $\veps_1 \geq \veps_2>0$ be two parameters and $d_1$, $d_2 \in \R^{\nK \times \nK}$ two real matrices. Let $\beta_1^\dagger$ and $\beta_2^\dagger$ be minimizers of $\Jdiag_{\veps_1,d_1}$ and $\Jdiag_{\veps_2,d_2}$, let $\Delta d = d_2 - d_1$, $\Delta \beta = \beta_2^\dagger - \beta_1^\dagger$. Let the matrix $w \in \R^{\nK \times \nK}$ be given by $w(i,j) = \max\{-\Delta d(i,j),\Delta d(j,i)\}$.
	Then $\max \Delta \beta - \min \Delta \beta \leq \maxdiam(w) + 2\,\veps_1\,\nK\,\log \nK$, where
	\begin{align*}
		\maxdiam(w) = \max \left\{ \sum_{i=1}^{k-1} w(j_i,j_{i+1})\,:\,
			k \in \{2,\ldots,\nK\},
			j_i \in \{1,\ldots,\nK\} \tn{ for } i=1,\ldots,k, \tn{ all } j_i \tn{ distinct.}
			\right\}.
	\end{align*}
	That is, $\maxdiam(w)$ is the length of the longest cycle-less path in $\{1,\ldots,\nK\}$ with edge lengths $w$.
\end{lemma}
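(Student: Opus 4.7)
The plan has three stages: existence of minimizers, derivation of a pointwise comparison estimate, and a path argument linking $\max\Delta\beta$ to $\min\Delta\beta$ through the graph with edge weights $w$.

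For existence, I note that $\Jdiag_{\veps,d}$ is invariant under constant shifts $\beta\mapsto\beta+c\cdot 1_\nK$, so I pass to the quotient $\R^\nK/(\R\cdot 1_\nK)$. On this quotient the functional is strictly convex---each summand is strictly convex in the difference $\beta(i)-\beta(j)$, and these differences span the quotient---and coercive, since whenever $\beta(i)-\beta(j)\to+\infty$ for some pair $i\neq j$ the summand corresponding to $(j,i)$ diverges like $\exp((\beta(i)-\beta(j))/\veps)$. The direct method then yields a unique minimizer on the quotient, equivalently existence of minimizers that are unique up to constants. Differentiating in $\beta(k)$, the diagonal $(k,k)$ terms cancel and the first-order condition reduces to
\[
\sum_{j\neq k}\exp\!\bigl(-g(k,j)/\veps\bigr) \;=\; \sum_{i\neq k}\exp\!\bigl(-g(i,k)/\veps\bigr), \qquad g(i,j)\eqdef d(i,j)+\beta(i)-\beta(j),
\]
equivalently $\softmin^{\veps}_{j\neq k}g(k,j) = \softmin^{\veps}_{i\neq k}g(i,k)$ for every $k$.

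Writing this equality for both minimizers $\beta_s^\dagger$ ($s=1,2$) and subtracting, I bound the softmin difference $\softmin^{\veps_2}g_2-\softmin^{\veps_1}g_1$ on the $j$-side from below by the corresponding minimum (losing $\veps_2\log(\nK-1)$) and on the $i$-side from above by the corresponding maximum (losing $\veps_1\log(\nK-1)$), via the last estimate of Lemma~\thref{thm:SoftMaxMin}. Substituting $g_2(i,j)-g_1(i,j)=\Delta d(i,j)+\Delta\beta(i)-\Delta\beta(j)$ and using $\Delta d(i,k)\leq w(k,i)$ together with $-\Delta d(k,j)\leq w(k,j)$ (both immediate from the definition of $w$), I rearrange to the pointwise comparison
\[
\Delta\beta(k) \;\leq\; \max_{\ell\neq k}\bigl[w(k,\ell)+\Delta\beta(\ell)\bigr] \;+\; \tfrac{1}{2}(\veps_1+\veps_2)\log(\nK-1),
\]
valid for every $k$, together with the dual inequality $\Delta\beta(k)\geq\min_{\ell\neq k}[\Delta\beta(\ell)-w(\ell,k)]-\tfrac{1}{2}(\veps_1+\veps_2)\log(\nK-1)$ obtained by swapping the choices of upper and lower bounds on the two softmin differences.

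Setting $C\eqdef\tfrac{1}{2}(\veps_1+\veps_2)\log(\nK-1)\leq\veps_1\log\nK$, I finally argue by tracing a path in the graph with weights $w$. Starting at $k_0\in\argmax\Delta\beta$ and iterating $k_{t+1}\in\argmax_{\ell\neq k_t}[w(k_t,\ell)+\Delta\beta(\ell)]$, the trajectory is deterministic on the finite set $\{1,\dots,\nK\}$ and must revisit a node within at most $\nK$ steps, so the prefix $k_0,\dots,k_{m-1}$ (with $m\leq\nK$) consists of distinct vertices---a cycle-less path. Telescoping the pointwise inequality along it yields $\Delta\beta(k_0)-\Delta\beta(k_{m-1})\leq\maxdiam(w)+(m-1)C\leq\maxdiam(w)+\veps_1\nK\log\nK$. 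An analogous ascending iteration from $\argmin\Delta\beta$, using the dual inequality, yields a symmetric bound on the gap from $\argmin\Delta\beta$ up to the terminal node of that path. Combining the two and observing that the two terminals bracket the full range of $\Delta\beta$ gives $\max\Delta\beta-\min\Delta\beta\leq\maxdiam(w)+2\veps_1\nK\log\nK$. The main obstacle is that the deterministic descent need not actually reach $\argmin\Delta\beta$---it may enter a cycle first---so one must combine the descending and ascending paths and verify carefully that the bounds on their terminals close the residual gap, which is precisely where the factor of $2$ in the slack term ultimately enters.
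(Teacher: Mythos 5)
Your existence argument and your derivation of the pointwise comparison
\begin{align*}
	\Delta\beta(k) \;\leq\; \max_{\ell\neq k}\bigl[w(k,\ell)+\Delta\beta(\ell)\bigr] + \tfrac{1}{2}(\veps_1+\veps_2)\log\nK
\end{align*}
from the first-order conditions match the paper's opening steps. The gap is in the path argument, and it is exactly the obstacle you flag at the end without resolving. The greedy descent $k_{t+1}\in\argmax_{\ell\neq k_t}[w(k_t,\ell)+\Delta\beta(\ell)]$ gives lower bounds on $\Delta\beta$ only at the nodes it actually visits; once it closes a cycle, the single-node optimality condition gives you no new information, because the maximum defining $k_{t+1}$ ranges over \emph{all} $\ell\neq k_t$ and may always be attained at an already-visited node. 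If the index realizing $\min\Delta\beta$ is never visited, the telescoped bound says nothing about it. The dual ascent from $\argmin\Delta\beta$ has the symmetric defect, and the two trajectories can live in disjoint subsets of $\{1,\ldots,\nK\}$, so "combining the descending and ascending paths" does not close the gap. The claim that "the two terminals bracket the full range of $\Delta\beta$" is unjustified and is the missing step.

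The paper's proof supplies the missing idea by a \emph{reduction to an effective problem}: after finding $i_2$, the visited set $I_1=\{i_1,i_2\}$ is collapsed into a single node, a reduced functional $\JdiagEff_a(\hat\beta)=\Jdiag_{\veps_a,d_a}(\tilde\beta_a+B\hat\beta)$ is formed with coefficients $D_a$ obtained by softmins over $I_1$, and the restriction $\hat\beta_a^\dagger$ of the original minimizer is shown to minimize the reduced functional. The first-order condition of the \emph{reduced} problem at the merged node then forces the next index $i_3$ to lie in the complement $I_2$, so the construction provably reaches every index (in particular the argmin) along a cycle-less path of a tree rooted at $i_1$. The softmin defining $D_a$ costs an extra $\max\{\veps_1,\veps_2\}\log\nK$ per reduction step, which is where the factor $2$ in $2\,\veps_1\,\nK\,\log\nK$ actually comes from --- not from combining a descending and an ascending sweep. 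Without this reduction (or an equivalent device that forces progress to unvisited indices, e.g.\ a balance condition summed over the visited set), your argument cannot be completed as stated.
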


The proofs of Theorem \ref{thm:AuctionEpsStability} and Lemma \ref{lem:EffectiveDiagonalProblemStability} can be found in Appendix \ref{sec:ApxAdditionalProofs}.

\subsection[Application to Epsilon-Scaling]{Application To $\veps$-Scaling}
\label{sec:AuctionEpsScaling}

Assuming that we know the dual solution for some $\veps_1>0$, then Theorem \ref{thm:AuctionEpsStability} allows to bound the number of iterations of Algorithm \ref{alg:SinkhornAsymmetric} for some smaller $\veps_2 \in (0,\veps_1)$, independently of bounds on the cost function $c$.
This may have implications for the efficiency of $\veps$-scaling (see Remark \ref{rem:AuctionSinkhornEpsScaling}).
\begin{proposition}[Single $\veps$-Scaling Step]
\label{prop:EpsScaling}
Consider the set-up of Theorem \ref{thm:AuctionEpsStability}. In particular, let $\veps_1 > \veps_2 > 0$ be two regularization parameters, let $(\alpha_1,\beta_1)$, $(\alpha_2,\beta_2)$ be corresponding optimizers of \eqref{eqn:EntropicOTDual}.
If Algorithm \ref{alg:SinkhornAsymmetric} is initialized with $\iterz{v}=\exp(\beta_1/\veps_2)$, with regularization $\veps_2$, and for a given $\qTarget \in (0,1)$, the number of iterations $n$ necessary to achieve $\iter{q}{n} \geq \qTarget$ is bounded by
	\begin{align}
		\label{eqn:EpsScaling}
		n \leq 2 + \frac{\veps_1}{\veps_2} \frac{N \cdot (4 \log N + 24 \log M) + \log M}{1-\qTarget}\,.
	\end{align}
\end{proposition}

\begin{proof}
	For the optimal scaling factor $u_1$ of the $\veps_1$-problem we find:
	\begin{align*}
	u_1(x) & \eqdef \exp(\alpha_1(x)/\veps_1) = \Big( \sum_{y \in Y}
		\exp\left(-\tfrac{1}{\veps_1} [c(x,y)-\beta_1(y)]\right) \nu(y) \Big)^{-1}
	\end{align*}
	This implies $u_1(x)^{-1}\,\nu(y)^{-1} \geq \exp\left(-\tfrac{1}{\veps_1} [c(x,y)-\beta_1(y)]\right)$ for all $(x,y) \in X \times Y$.
	With this we can bound the first iterate of the $\veps_2$-run of the algorithm by:
	\begin{align*}
	\itero{u}(x) = \Big( \sum_{y \in Y}
		\exp\left(-\tfrac{1}{\veps_2} [c(x,y)-\beta_1(y)]\right) \nu(y) \Big)^{-1}
		\geq \Big( \sum_{y \in Y}
			\left(u_1(x)\,\nu(y)\right)^{-\tfrac{\veps_1}{\veps_2}} \,\nu(y) \Big)^{-1}
		\geq \Big(\tfrac{u_1(x)}{M}\Big)^{\tfrac{\veps_1}{\veps_2}}
	\end{align*}
	where we have used $\nu(y)\geq 1/M$, Assumption \ref{asp:AuctionAtomicMass}. Eventually we find
	$\itero{\alpha}(x) \geq \alpha_1(x) -\veps_1\,\log M$.

	By monotonicity of the iterates we have $\beta_2 \leq \iterl{\beta} \leq \iterz{\beta} = \beta_1$ and $\iterl{\beta}(y')=\beta_1(y')$ for a suitable $y' \in Y$ (Proposition \ref{prop:AuctionMonotonicity}). Consequently $\max \Delta \beta=0$.
	Then, from Theorem \ref{thm:AuctionEpsStability}, we obtain $\beta_2(y) - \beta_1(y) \geq \min \Delta \beta \geq -\veps_1 \cdot A$ where $A=N \cdot (4 \log N + 24 \log M)$.
	With this we can bound the $u$-iterates:
	\begin{align*}
		\iterl{u}(x) & \leq u_2(x) \eqdef \exp(\alpha_2(x)/\veps_2) =
			\Big( \sum_{y \in Y} \exp\left(- \tfrac{1}{\veps_2}
				[c(x,y)-\beta_2(y)]\right) \nu(y) \Big)^{-1} \\
		& \leq \Big( \sum_{y \in Y} \exp\left(- \tfrac{1}{\veps_2}
				[c(x,y)-\beta_1(y)]\right) \nu(y) \Big)^{-1} \,
					\exp\left(\tfrac{\veps_1}{\veps_2} A\right) \\
		\intertext{With convexity of $s \mapsto s^{\veps_1/\veps_2}$ and Jensen's inequality we get}
		\iterl{u}(x) & \leq \Big( \sum_{y \in Y} \exp\left(- \tfrac{1}{\veps_1}
				[c(x,y)-\beta_1(y)]\right) \nu(y) \Big)^{-\veps_1/\veps_2} \,
					\exp\left(\tfrac{\veps_1}{\veps_2} A\right)
			= u_1(x)^{\veps_1/\veps_2} \exp\left(\tfrac{\veps_1}{\veps_2} A\right)
	\end{align*}
	and finally $\iterl{\alpha}(x) \leq \alpha_1(x) + \veps_1\,A$. We summarize: $\iterl{\alpha}(x)-\iterz{\alpha}(x) \leq \veps_1\,(A+\log M)$.
	Now using Lemma \ref{lem:SinkhornAsymmetricAlphaIncrement} and arguing as in Proposition \ref{prop:SinkhornAsymmetricIterationBound}, we find that there is some $n \leq 2 + \frac{\veps_1}{\veps_2} \frac{A + \log M}{1-\qTarget}$ such that $\iter{q}{n} \geq \qTarget$.
\end{proof}

Let now $\cMax = \max c$ for a cost function $c \geq 0$, let $\hat{\veps}>0$ be the desired final regularization parameter, pick some $\lambda \in (0,1)$ and let $k \in \N$ such that $\hat{\veps} \cdot \lambda^{-k} \geq \cMax$. Let $\epsList=(\hat{\veps} \cdot \lambda^{-k}, \hat{\veps} \cdot \lambda^{-k+1}, \ldots, \hat{\veps})$ be a list of decreasing regularization parameters.

\begin{remark}
\label{rem:AuctionSinkhornEpsScaling}
Now we combine Algorithm \ref{alg:SinkhornAsymmetric} with $\veps$-scaling, (cf.~Algorithm \ref{alg:ScalingEpsScaling}).
For $\veps=\hat{\veps} \cdot \lambda^{-k} \geq \cMax$, according to Proposition \ref{prop:SinkhornAsymmetricIterationBound} it will take at most $2+\frac{1}{1-\qTarget}$ iterations.
It is tempting to deduce from Proposition \ref{prop:EpsScaling} that for each subsequent value of $\veps$ at most $2+\frac{A}{\lambda\,(1-\qTarget)}$ iterations are required, with $A=N \cdot (4 \log N + 24 \log M) + \log M$. For $N>1$ the total number of iterations would then be bounded by $(2+\frac{A}{\lambda\,(1-\qTarget)}) \cdot (k+1)$.
For fixed $\lambda$ the step parameter $k$ scales like $\log(\cMax/\hat{\veps})$. Consequently, the total number of iterations would be bounded by $\mc{O}(\log(\cMax/\hat{\veps}))$ w.r.t.~the cost function and regularization, which would be analogous to $\veps$-scaling for the auction algorithm (Remark \ref{rem:AuctionEpsScaling}).

There is an obvious gap in this reasoning: Theorem \ref{thm:AuctionEpsStability} assumes that $\beta_1$ is known exactly, while Algorithm \ref{alg:SinkhornAsymmetric} only provides an approximate result. From Example \ref{exp:QNonConvergence} we learn that in extreme cases this difference can be substantial and disrupt the efficiency of $\veps$-scaling.
Thus, additional assumptions on the problem are required to make the above argument rigorous.

However, as discussed in Remark \ref{rem:SinkhornAsymmetricStoppingCriterion}, in practice we usually observe that approximate iterates are sufficient and we can therefore hope that $\veps$-scaling does indeed serve its purpose.
\end{remark}


\newcommand{\piVec}{\bm{\pi}}

\section{Numerical Examples}
\label{sec:Numerics}
Now we present a series of numerical experiments to confirm the usefulness of the modifications proposed in Sect.~\ref{sec:Algorithm}.
We show that runtime and memory usage are reduced substantially.
At the same time the adapted algorithm is still as versatile as the basic version of \cite{ChizatEntropicNumeric2018}, Algorithm \ref{alg:Scaling}. But Algorithm \ref{alg:Full} can solve larger problems at lower regularization, yielding very sharp results.
We give examples for unbalanced transport, barycenters and Wasserstein gradient flows.
The code used for the numerical experiments is available from the author's website.\footnote{\url{https://github.com/bernhard-schmitzer}}

\subsection{Setup}
We transport measures on $[0,1]^d$ for $d \in \{1,2,3\}$, represented by discrete equidistant Cartesian grids.
The distance between neighbouring grid points is denoted by $h$.
For the squared Euclidean distance cost function $c(x,y)=|x-y|^2$, $x$, $y \in \R^d$, $\kernel$ is a Gaussian kernel with approximate width $\sqrt{\veps}$.
Therefore, it is useful to measure $\veps$ in units of $h^2$. For $\veps = h^2$ the blur induced by the entropy smoothing is on the length scale of one pixel.
With the enhanced scaling algorithm we solve most problems in this section with $\veps=0.1 \cdot h^2$, leaving very little blur and giving a good approximation of the original unregularized problem (see Fig.~\ref{fig:expEpsComparison}).

Unless stated otherwise, we use the following settings:
Test measures are mixtures of Gaussians, with randomized means and variances. The cost function is the squared Euclidean distance.
$\rho$ is the product measure $\mu \otimes \nu$ for optimal transport problems and the discretized Lebesgue measure on the product space for problems with variable marginals.
For standard optimal transport the stopping criterion is the $L^\infty$ error between prescribed marginals $(\mu,\nu)$ and marginals of the primal iterate $\pi$ (and likewise for Wasserstein barycenters). For all other models the primal-dual gap is used.
We set $\theta=10^{-20}$ for truncating the kernel and $\tau=10^2$ as upper bound for $(\uRel,\vRel)$ (cf.~\eqref{eqn:getKernelSparse}, Algorithm \ref{alg:ScalingStabilized}, line \ref{alg:ScalingStabilizedTauCheck}), implying a bound of $10^{-16} \cdot \rho(X \times Y)$ for the truncation error, which is many orders of magnitude below prescribed marginal accuracies or primal-dual gaps.
The hierarchical partitions in the coarse-to-fine scheme are $2^d$-trees, where each layer $i$ is a coarser $d$-dimensional grid with grid constant $h_i$.
For combination with $\veps$-scaling (Algorithm \ref{alg:Full}) we choose the lists $\epsList_i$, $i>0$, such that for the smallest $\veps_i$ in each $\epsList_i$ we have roughly $\veps_i/h_i^2 \approx 1$. On the finest scale, we go down to the desired final value of $\veps$.
All reported run-times were obtained on a single core of an Intel Xeon E5-2697 processor at 2.7 GHz.

\subsection{Efficiency of Enhanced Algorithm}
\label{sec:NumericsEfficiency}
\begin{figure}[hbt]
	\centering
	\includegraphics{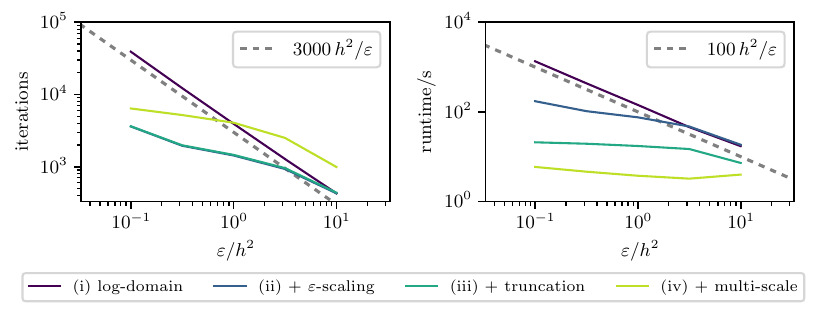}
	\caption{%
	Efficiency of enhancements: average number of iterations and runtime for different $\veps$ and algorithms. %
	$X=Y$ are 2-d $64 \times 64$ grids. %
	(i) log-domain stabilized, Algorithm \ref{alg:ScalingStabilized}, %
	(ii) with $\veps$-scaling, Algorithm \ref{alg:ScalingEpsScaling}, %
	(iii) with sparse stabilized kernel, \eqref{eqn:getKernelSparse}, %
	(iv) with multi-scale scheme, Algorithm \ref{alg:Full}. %
	(ii) and (iii) need same number of iterations, but the sparse kernel requires less time. %
	The naive implementation, Algorithm \ref{alg:Scaling}, requires same number of iterations as (i), but numerical overflow occurs at approximately $\veps \leq 	3\,h^2$. %
	}
	\label{fig:expEnhancements}
\end{figure}

The numerical efficiency of the subsequent modifications presented in Sect.~\ref{sec:Algorithm}, applied to the standard Sinkhorn algorithm, is illustrated in Fig.~\ref{fig:expEnhancements}.
While the stabilized algorithm (i) is not yet faster than the naive implementation, it can robustly solve the problem for all given values of $\veps$.
The required number of iterations scales like $\mc{O}(1/\veps)$, in good agreement with the complexity analysis of Sect.~\ref{sec:AuctionSimple}.
With $\veps$-scaling (ii) the number of iterations is decreased substantially.
Replacing the dense kernel with the adaptive truncated sparse kernel (iii) does not change the number of required iterations, but saves time and memory.
With the multi-scale scheme the required number of iterations is slightly increased, since the initial dual variables obtained at a coarser level are only approximate solutions. But by reducing the number of variables during the early $\veps$-scaling stages, the runtime is further decreased (cf.~Fig.~\ref{fig:MultiScaleEpsScaling}).
	The combination of all modifications leads to an average total speed-up of more than two orders of magnitude on this problem type.

\begin{figure}[hbt]
	\centering%
	\hfill%
	\includegraphics{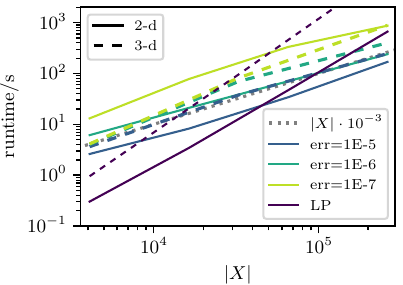}%
	\hfill%
	\includegraphics{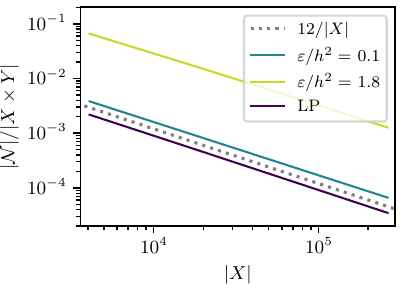}%
	\hfill%
	\caption{%
	Average runtime and sparsity of Algorithm \ref{alg:Full} for transporting test-images of different size (up to $512^2$ pixels for 2-d, $64^3$ for 3-d). Stopping criterion: $L^\infty$-marginal error, for different accuracy limits, final $\veps=0.1 \cdot h^2$. Performance of the adaptive sparse linear programming solver \cite{SchmitzerShortCuts2015} given for comparison (LP). %
	As expected, runtime increases with required accuracy. %
	The runtime of the scaling algorithm scales more favourably (approximately linear) with $|X|$ and is competitive for large instances. %
	The number of variables scales as $\mc{O}(1/|X|)$, suggesting that the number of variables per $x \in X$ is roughly constant.
	For the final $\veps=0.1 \cdot h^2$, the sparsity of the truncated kernel is comparable to \cite{SchmitzerShortCuts2015}. For $\veps=1.8 \cdot h^2$, the largest value in $\epsList_0$ (the list for the finest scale), more variables are required. %
	}
	\label{fig:expImageBenchmark}
\end{figure}

A runtime benchmark and study of the sparsity of the truncated kernel are given in Fig.~\ref{fig:expImageBenchmark}.
The runtime scales approximately linear with $|X|$ and for large problems the algorithm becomes faster than the adaptive sparse linear programming solver \cite{SchmitzerShortCuts2015}.
The final number of variables in the sparse kernel is comparable with the number of variables in \cite{SchmitzerShortCuts2015}, for higher values of $\veps$, during scaling, more memory is required (cf.~Fig.~\ref{fig:expEpsComparison}). This underlines again the importance of the coarse-to-fine scheme (Sect.~\ref{sec:AlgorithmMultiScale}).
It should be noted, that Fig.~\ref{fig:expEnhancements} shows results for $64 \times 64$ images, the smallest image size in Fig.~\ref{fig:expImageBenchmark}. For larger images the runtime difference between (i-iv) would be even larger, but due to time and memory constraints, only (iv) can be run practically.

\begin{figure}[hbt]
	\centering
	\includegraphics{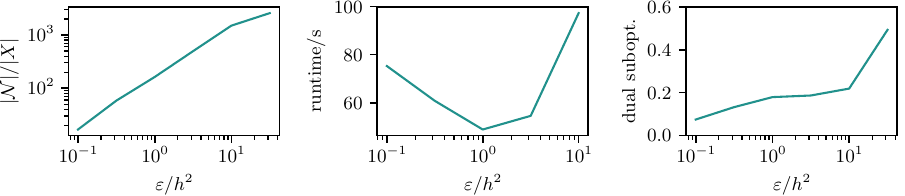}
	\caption{%
	Different final values for $\veps$ in Algorithm \ref{alg:Full}. $X=Y$ are 2-d $256 \times 256$ grids. %
	\textbf{Left} Average number of variables in truncated kernel per $x \in X$. For $\veps = 0.1 \cdot h^2$ only about 10 variables per $x \in X$ are required. As expected, this number increases with $\veps$ (cf.~Fig.~\ref{fig:MultiScaleEpsScaling}).
	\textbf{Center} For large $\veps$, the runtime decreases with $\veps$, since the number of variables decreases (cf.~left plot). For smaller $\veps$, the runtime increases again, since more stages of $\veps$-scaling are required. %
	\textbf{Right} The optimal regularized dual variables were transformed into feasible unregularized dual variables, by decreasing each $\alpha(x)$ until all dual constraints $\alpha(x)+\beta(y) \leq c(x,y)$ were met, \eqref{eqn:OptimalTransportDual}. The sub-optimality of these dual variables is shown. As expected (see Sect.~\ref{sec:IntroductionRelatedWork}) they converge towards a dual optimizer. Absolute optimal value was between $100$ and $400$ for the used test problems, i.e.~for small $\veps$, sub-optimality is small compared to total scale. %
	}
	\label{fig:expEpsComparison}
\end{figure}

The impact of different final values for $\veps$ is outlined in Fig.~\ref{fig:expEpsComparison}.
As expected, the number of variables in the truncated kernel increases with $\veps$.
This leads to two competing trends in the overall runtime: For large $\veps$, the kernel truncation is less efficient, leading to an increase with $\veps$. For small $\veps$, the number of variables is very small, but more and more stages of $\veps$-scaling are necessary, increasing the runtime as $\veps$ decreases further.
Convergence of the regularized optimal dual variables to the unregularized optimal duals is exemplified in the right panel, justifying the use of the approximate entropy regularization technique for transport-type problems.
While one may consider the dual sub-optimality at $\veps \approx 30\,h^2$ sufficiently accurate, we point out that the corresponding primal coupling still contains considerable blur (cf.~Fig.~\ref{fig:MultiScaleEpsScaling}) and that due to less sparsity the runtime is actually higher than for $\veps \approx h^2$.

As illustrated by Figs.~\ref{fig:expImageBenchmark} and \ref{fig:expEpsComparison}, by choosing the threshold for the stopping criterion and the desired final $\veps$, one can tune between required precision and available runtime.

\begin{remark}[Interplay of Modifications]
	\label{rem:EnhancementRelation}
	The numerical findings presented in Figs.~\ref{fig:expEnhancements}-\ref{fig:expEpsComparison} underline how each of the modifications discussed in Sect.~\ref{sec:Algorithm} builds on the previous ones and that all four of them are required for an efficient algorithm.
	The log-domain stabilization is an indispensable prerequisite for running the scaling algorithms with small regularization.
	However, for small $\veps$, convergence tends to become extremely slow (cf.~Fig.~\ref{fig:expEnhancements}), therefore $\veps$-scaling is needed to reduce the number of iterations.
	For small $\veps$, kernel truncation significantly reduces the number of variables and accelerates the algorithm (cf.~Figs.~\ref{fig:expEnhancements} and \ref{fig:expEpsComparison}).
	However, for large $\veps$ (which must be passed during $\veps$-scaling), far fewer variables are truncated and the algorithm cannot be run on large problems.
	This can be avoided by using the coarse-to-fine scheme, completing the algorithm.
	In principle it is possible, only to combine log-domain stabilization with kernel truncation, and to skip $\veps$-scaling and the coarse-to-fine scheme. While this tends to solve the stability and memory issues, convergence is still impractically slow.
\end{remark}

\subsection{Versatility}
\label{sec:NumericsVersatility}
The framework of scaling algorithms developed in \cite{ChizatEntropicNumeric2018}, see Sect.~\ref{sec:Background}, allows to solve more general transport-type problems for which the enhancements of Sect.~\ref{sec:Algorithm} still apply.
We now give some examples to demonstrate this flexibility.
The scope of the following examples is similar to \cite{ChizatEntropicNumeric2018}, but with Algorithm \ref{alg:Full} one can solve larger problems with smaller regularization.

\begin{figure}
	\centering
	\includegraphics{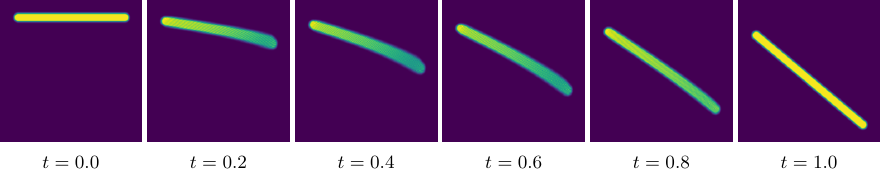}
	\caption{%
	Geodesic for Wasserstein-Fisher-Rao distance on $[0,1]^2$, approximated by a $256 \times 256$ grid, computed as barycenters between end-points with varying weights. %
	Mass that travels further, is decreased during transport to save cost.%
	}
	\label{fig:versatWFGeodesic}
\end{figure}

\paragraph{$\KL$ Fidelity and Wasserstein-Fisher-Rao distance}
For the marginal function $F_X(\sigma) = \lambda \cdot \KL_X(\sigma|\mu)$ with a given reference measure $\mu \in \R_+^X$ and a weight $\lambda > 0$, see Def.~\ref{def:UnbalancedTransport}, one obtains for the (stabilized) $\proxdivSymb$ operator
\begin{align}
	\label{eqn:KLProxDiv}
	\proxdiv{F_X}{\veps}(\sigma) & = \left( \mu \oslash \nu \right)^{\frac{\lambda}{\lambda+\veps}}, &
	\proxdiv{F_X}{\veps}(\sigma,\alpha) & =
		\exp\left(- \tfrac{\alpha}{\lambda + \veps} \right) \odot
			\left( \mu \oslash \nu \right)^{\frac{\lambda}{\lambda+\veps}}.
\end{align}
A proof is given in \cite{ChizatEntropicNumeric2018}.
Compared to the standard Sinkhorn algorithm, the only modification is the pointwise power of the iterates.
As $\lambda \to \infty$ the Sinkhorn iterations are recovered.
In the stabilized operator only the exponential $\exp\big(\tfrac{-\alpha}{\lambda + \veps}\big)$ needs to be evaluated, which remains bounded as $\veps \to 0$.
Algorithm \ref{alg:Full} performs similarly with $\KL$-fidelity as with fixed marginal constraints, allowing to efficiently solve large unbalanced transport problems. Since the truncation scheme can also be used with non-standard cost functions such as \eqref{eqn:WFRCost}, this includes in particular the Wasserstein-Fisher-Rao (WFR) distance.
Fig.~\ref{fig:versatWFGeodesic} shows a geodesic for the WFR distance, to intuitively illustrate its properties.
The geodesic has been computed as weighted barycenters between its endpoints (see below). For a direct dynamic formulation we refer to \cite{KMV-OTFisherRao-2015,ChizatOTFR2015,LieroMielkeSavare-HellingerKantorovich-2015a}. For the relation to the $\KL$ soft-marginal formulation, Def.~\ref{def:UnbalancedTransport}, see \cite{LieroMielkeSavare-HellingerKantorovich-2015a,ChizatDynamicStatic2018}.

\paragraph{Wasserstein barycenters}
Wasserstein barycenters as a natural generalization of the Riemannian center of mass have been studied in \cite{WassersteinBarycenter}.
The computation of entropy regularized Wasserstein barycenters with a Sinkhorn-type scaling algorithm has been presented in \cite{BenamouIterativeBregman2015}, an alternative numerical approach can be found in \cite{Cuturi14Barycenters}.
The iterations can be considered as a special case of the framework in \cite{ChizatEntropicNumeric2018}.
Here, we very briefly recall the iterations. Derivations and proofs can be found in \cite{BenamouIterativeBregman2015}.

\begin{figure}
	\centering
	\includegraphics{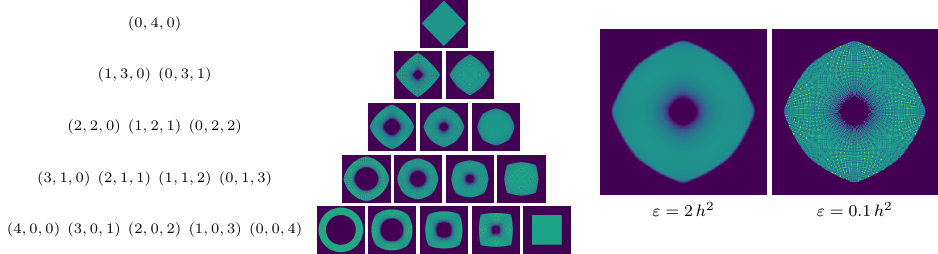}
	\caption{%
	Barycenters in Wasserstein space over $[0,1]^2$, computed on $256 \times 256$ grids for $\veps=0.1\,h^2$.
	\textbf{Left} Weights $4 \cdot (\lambda_1,\lambda_2,\lambda_3)$ for shown barycenters. %
	\textbf{Center} `Barycentric triangle' spanned by a ring, a diamond and a square %
		for weights on the left. %
	\textbf{Right} Close-up of the $\lambda=(1,2,1)/4$ barycenter for $\veps=2\,h^2$ (as reported in \cite{BenamouIterativeBregman2015}) and $\veps=0.1\,h^2$, computed with the adapted algorithm. The $\veps=0.1\,h^2$ version is much sharper, revealing discretization artifacts. %
	}
	\label{fig:versatBarycenter}
\end{figure}

We want to compute the (entropy regularized) Wasserstein barycenter of a tuple $(\mu_1,\ldots,\mu_n) \allowbreak \in \R^{X \times n}$ over a common base space $X=Y$ with metric $d$ with non-negative weights $(\lambda_1,\ldots,\lambda_n)$ that sum to one.
The primal functional can be written as an optimization problem over a tuple $(\pi_i)_{i=1}^n=(\pi_1,\ldots,\pi_n) \in \R^{(X \times X) \times n}$ of couplings, which requires a slight generalization of Def.~\ref{def:RegularizedGenericFormulation}, see \cite{ChizatEntropicNumeric2018}. It is given by
\begin{align}
	\label{eqn:BarycenterPrimal}
	E((\pi_i)_i) & = F_1((\projx \, \pi_i)_i)
		+ F_2((\projy\,\pi_i)_i) + \sum_{i=1}^n \lambda_i\,\KL(\pi_i|\kernel)
\end{align}
where
\begin{align*}
	F_1((\nu_i)_i) & = \sum_{i=1}^n \iota_{\{\mu_i\}}(\nu_i), &
	F_2((\nu_i)_i) & = \begin{cases} 0 & \tn{if } \exists\, \sigma \in \R^X \tn{ s.t.~}
		[\sigma=\nu_i \,\forall\, i=1,\ldots,n], \\
		+\infty & \tn{else.}
		\end{cases}
\end{align*}
and $\kernel$ is the kernel \eqref{eqn:Kernel} over $X \times X$ for the cost $c=d^2$.
When an optimizer $(\pi^\dagger_i)_i$ is found, the common second marginal of all $\pi^\dagger_i$ is the sought-after barycenter.
To solve \eqref{eqn:BarycenterPrimal} one considers again a suitable dual problem and uses alternating optimization. Updates corresponding to $F_1$ decompose into independent standard Sinkhorn iterations for each marginal, the update for $F_2$ couples all marginals, see \cite{BenamouIterativeBregman2015,ChizatEntropicNumeric2018}. The adaptations from Sect.~\ref{sec:Algorithm} remain applicable.
A barycentric triangle computed with Algorithm \ref{alg:Full} is shown in Fig.~\ref{fig:versatBarycenter}.
The log-domain stabilization allows to reach a lower final regularization $\veps$ as for example in \cite{BenamouIterativeBregman2015}.
Regularization can be made so small that discretization artifacts become visible. While this may not look entirely pleasing, it clearly gives a better approximation to the unregularized problem and illustrates that with log-domain stabilization entropy regularized numerical methods can produce sharp results.

\paragraph{Wasserstein-Fisher-Rao barycenters}
Similarly one can define barycenters for transport distances with $\KL$ marginal fidelity, which includes the Gaussian Hellinger-Kantorovich (GHK) distance and the Wasserstein-Fisher-Rao (WFR) distance (Def.~\ref{def:UnbalancedTransport}).
The primal functional is given by \eqref{eqn:BarycenterPrimal} with
\begin{align*}
	F_1((\nu_i)_i) & = \Lambda \cdot \sum_{i=1}^n \lambda_i \, \KL(\nu_i|\mu_i)\,, &
	F_2((\nu_i)_i) & = \inf_{\sigma \in \R^X} \Lambda \cdot \sum_{i=1}^n \lambda_i \, \KL(\nu_i|\sigma)\,,
\end{align*}
where $\Lambda>0$ is a global weight of the $\KL$-fidelity. When a primal optimizer is found, the minimizing $\sigma$ in $F_2$ yields the sought-after barycenter. We refer to \cite{ChizatEntropicNumeric2018} for details.
Partial optimization corresponding to $F_1$ can again be done separately for each marginal, leading to $\KL$ fidelity updates as given by \eqref{eqn:KLProxDiv}. The update corresponding to $F_2$ is again coupled \cite{ChizatEntropicNumeric2018}, adaptations from Sect.~\ref{sec:Algorithm} remain applicable.

\begin{figure}
	\centering
	\includegraphics{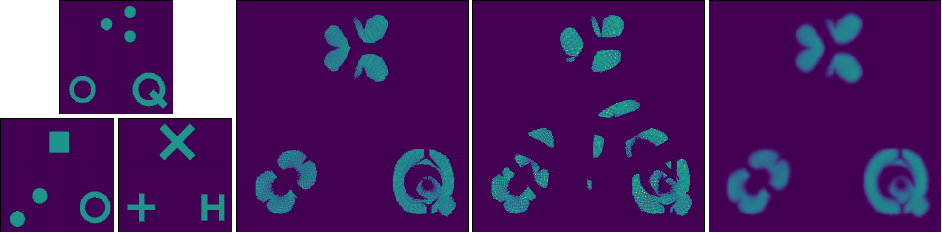}
	\caption{Comparison of different barycenters models: %
		\textbf{First column} Corner points of barycentric triangle. Each measure consists of three `groups'.
		\textbf{Second column} Wasserstein-Fisher-Rao barycenter for $\lambda=(1,2,1)/4$ for $\veps=0.1\,h^2$. %
		\textbf{Third column} Wasserstein barycenter between normalized reference measures for $\veps=0.1\,h^2$. Unlike the `unbalanced' barycenters, here mass must be transferred between the different `groups' of the reference measures. %
		\textbf{Fourth column} Gaussian Hellinger-Kantorovich barycenter for $\veps=6.55\,h^2$, as computed with Gaussian convolution without log-domain stabilization. %
	}
	\label{fig:versatBarycenterWFCloseup}
\end{figure}

\paragraph{Wasserstein Gradient Flows}
\begin{figure}
	\centering %
	\hfill %
	\includegraphics[height=5cm]{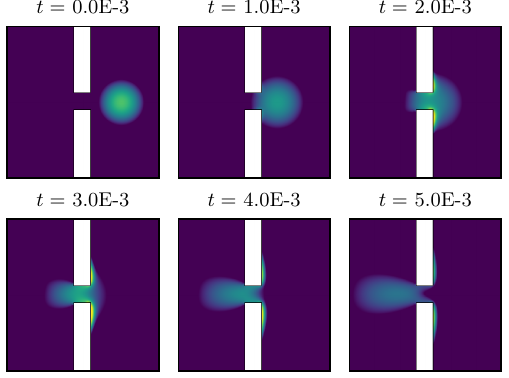} %
	\hfill %
	\includegraphics[height=5cm]{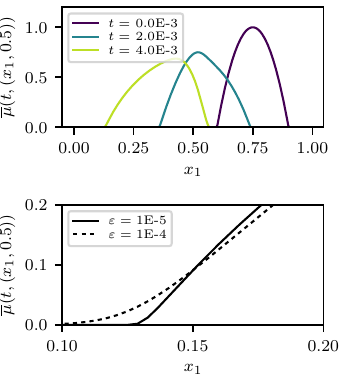} %
	\hfill
	\caption{\textbf{Left} Entropic Wasserstein gradient flow for the porous media equation on $[0,1]^2$, approximated by a $256 \times 256$ grid with $\veps=10^{-5} \approx 0.66\,h^2$, $\tau=2 \cdot 10^{-4}$. %
	The energy is given by \eqref{eqn:GradientFlowEnergy} with $v((x_1,x_2)) = 100 \cdot x_1$ if $x=(x_1,x_2) \in \Omega$, $v(x)=+\infty$ otherwise and $\Omega = [0,1]^2 \setminus \hat{\Omega}$ where $\hat{\Omega}$ is a `barrier' indicated by the white rectangles. %
	\textbf{Top Right} Cross section of density at different times along $x_2 = 0.5$. %
	\textbf{Bottom Right} Close-up for $t=4 \cdot 10^{-3}$ for different values of regularization $\veps$. %
	For $\veps=10^{-5}$ the compact support of $\ol{\mu}$, a characteristic feature of the porous media equation, is numerically well preserved.
	Without log-domain stabilization, for $\veps=10^{-4}$ the entropic blur quickly distorts this feature.
	}
	\label{fig:versatGradientFlow}
\end{figure}
In \cite{PeyreEntropicFlows2015} diagonal scaling algorithms were extended to compute proximal steps for entropy regularized optimal transport to approximate gradient flows in Wasserstein space (cf.~Sect.~\ref{sec:IntroductionRelatedWork}). This was then subsumed into the general framework of \cite{ChizatEntropicNumeric2018}.
Here we given an example for the porous medium equation, for more details we refer to \cite{PeyreEntropicFlows2015,ChizatEntropicNumeric2018}. Let
\newcommand{\leb}{\mc{L}}
\newcommand{\muInt}{\ol{\mu}}
\newcommand{\LambertW}{\mc{W}}
\begin{align}
	\label{eqn:GradientFlowEnergy}
	F & : \prob(X) \to \RExt, &
	\mu & \mapsto \sum_{x \in X} u\left(\tfrac{\mu(x)}{\leb(x)}\right) \, \leb(x)
		+ \sum_{x \in X} v(x)\,\mu(x)
\end{align}
where $u(s) = s^2$, $\leb$ is the discretized Lebesgue measure on $X \subset \R^d$ and $v : X \to \RExt$ is a potential.
Then, for some initial $\iterz{\mu} \in \prob(X)$ and a time step size $\tau > 0$ we iteratively construct a sequence $(\iterl{\mu})_\ell$ where $\iterll{\mu}$ is given by the proximal step of $F$ with step size $\tau$ w.r.t.~the entropy regularized Wasserstein distance on $X$ from reference point $\iterl{\mu}$.
Based on Def.~\ref{def:EntropicOptimalTransport}, $\iterll{\mu}$ can be computed as follows:
\begin{align}
	\label{eqn:GradientFlowPrimal}
	\iterll{\pi} & \eqdef \argmin_{\pi \in \prob(X^2)}
		\left( \iota_{\{\iterl{\mu}\}}(\projx\,\pi) + 2\,\tau \cdot F(\projy\,\pi)
		 + \veps\,\KL(\pi|\kernel) \right), &
	\iterll{\mu} & \eqdef \projy \iterll{\pi}\,,
\end{align}
where $\kernel$ is the kernel w.r.t.~the squared Euclidean distance on $X$.
Then introduce the time-continuous interpolation $\muInt : \R_+ \to \prob(X)$, $t \mapsto \iterl{\mu}$ when $t \in [\tau \cdot \ell, \tau \cdot (\ell +1))$.
Consider now the limit $(\tau,\veps) \to 0$ in a way such that $\veps |\log \veps| \leq \tau^2$. Then, up to discretization, the function $\muInt$ converges to a solution of the porous media PDE $\partial_t \muInt = \Delta (\muInt^2) + \ddiv( \muInt \cdot \nabla v)$.
A proof is given in \cite{Carlier-EntropyJKO-2015}.
Problem \eqref{eqn:GradientFlowPrimal} is an instance of Def.~\ref{def:RegularizedGenericFormulation} and can be solved by alternating dual optimization \cite{ChizatEntropicNumeric2018}.
A numerical example is shown in Fig.~\ref{fig:versatGradientFlow}.
As in the previous experiments, Algorithm \ref{alg:Full} allows to use log-domain stabilization on large problems, producing sharp results. In this example, the compact support of the porous media equation is numerically well preserved.


\section{Conclusion}
\label{sec:Conclusion}
Scaling algorithms for entropy regularized transport-type problems have become a wide-spread numerical tool.
Naive implementations have some severe numerical limitations, in particular for small regularization and on large problems.
In this article, we proposed an enhanced variant of the standard scaling algorithm to address these issues:
Diverging scaling factors and slow convergence are remedied by log-domain stabilization and $\veps$-scaling. Required runtime and memory are significantly reduced by adaptive kernel truncation and a coarse-to-fine scheme.
A new convergence analysis for the Sinkhorn algorithm was developed.
Numerical examples showed the efficiency of the enhanced algorithm, confirmed the scaling predicted by the convergence analysis and demonstrated that the algorithm can produce sharp results on a wide range of transport-type problems.
Potential directions for future research are the more detailed study of $\veps$-scaling, a more systematic understanding of the stability of the log-domain stabilization and application to multi-marginal problems.

\paragraph{Acknowledgements}
L\'ena\"ic Chizat, Luca Nenna and Gabriel Peyr\'e are thanked for stimulating discussions. Bernhard Schmitzer was supported by the European Research Council (project SIGMA-Vision).

\appendix

\section{Additional Proofs}

\label{sec:ApxAdditionalProofs}

\subsection%
	[Proof of Lemma \ref*{lem:EffectiveDiagonalProblemStability}]%
	{Proof of Lemma	\ref{lem:EffectiveDiagonalProblemStability}}
	First, we establish existence of minimizers. For some $\veps>0$, $d \in \R^{\nK \times \nK}$ the functional $\beta \mapsto \Jdiag_{\veps,d}(\beta)$ is convex and bounded from below. Further, it is invariant under adding the same constant to all components of $\beta$. Hence, the optimal value $\min_\beta \Jdiag_{\veps,d}(\beta)$ is not changed by adding the constraint $\beta(1)=0$.
	With this added constraint the functional becomes strictly convex and coercive in the remaining variables, hence a unique minimizer exists. The full set of minimizers is then obtained via constant shifts.
		
	The first order optimality condition for the functional yields for the $i$-th component of $\beta$:
	\begin{align*}
		\beta(i) = \frac{1}{2} \left[ \softmax_{j: j\neq i}(-d(i,j)+\beta(j),\veps) + \softmin_{j: j\neq i}(d(j,i)+\beta(j),\veps) \right],
	\end{align*}
	where the subscript $j : j \neq i$ denotes that $\softmax$ is taken only over components $\{1,\ldots,\nK\} \setminus \{i\}$.
	Finiteness of $d$ ensures that this expression is meaningful.
	Let $i_1 \in \{1,\ldots,\nK\}$ be an index where $\Delta \beta$ is maximal, i.e.\ $\Delta \beta(i_1) = \max \Delta \beta$.
	
	From the optimality conditions for $\beta_a(i_1)$, $a=1,2$, and \eqref{eqn:Softbounds} we obtain:
	\begin{align*}
		\beta_a^\dagger(i_1) & = \frac{1}{2} \left[ \softmax_{j : j \neq i_1}(
				-d_a(i_1,j) + \beta_a^\dagger(j), \veps_a)
			+ \softmin_{j : j \neq i_1}(
				d_a(j,i_1) + \beta_a^\dagger(j), \veps_a) \right], \\
		\Delta \beta(i_1) & \leq \frac{1}{2} \left[
			\max_{j:j \neq i_1} (-\Delta d(i_1,j) + \Delta \beta(j)) +
			\max_{j:j \neq i_1} ( \Delta d(j,i_1) + \Delta \beta(j)) + (\veps_1 + \veps_2) \cdot \log \nK \right] \\
			& \leq \max_{j:j \neq i_1} ( w(i_1,j) + \Delta \beta(j)) + \veps_1\, \log \nK
	\end{align*}
	where $w(i,j) = \max\{ -\Delta d(i,j), \Delta d(j,i) \}$. This implies there is some $i_2 \in \{1,\ldots,\nK\} \setminus \{i_1\}$ with
	\begin{align*}
		\Delta \beta(i_2) \geq \Delta \beta(i_1) - w(i_1,i_2) - \veps_1\, \cdot \log \nK\,.
	\end{align*}
	
	We will call the index $i_2$ a child of $i_1$.
	We now repeat this reasoning to derive lower bounds for other entries of $\Delta \beta$. For this we must `remove' the index $i_2$ from the problem, defining a reduced problem.
	Let $I_1 = \{i_1,i_2\}$ and let $I_2 = \{1,\ldots,\nK\} \setminus I_1$.
	We will keep all variables of $\beta$ with indices in $I_2$, but describe all variables with indices in $I_1$ by a single reduced variable. For this we consider vectors in $\R^{1+|I_2|}$, where we index the entries by $\{i_1\} \cup I_2$. One can think of this as a vector in $\R^\nK$, where we have `crossed out' entries corresponding to $I_1$ and replaced them by a single effective entry, indexed with $i_1$.
	For $a=1,2$ we consider the reduced functionals $\JdiagEff_a : \hat{\beta} \mapsto \Jdiag_{\veps_a,d_a}(\tilde{\beta}_a + B\,\hat{\beta})$	where $\tilde{\beta}_a \in \R^\nK$ is a constant offset, $\hat{\beta} \in \R^{1 + |I_2|}$ is the reduced variable and $B \in \R^{\nK \times (1+|I_2|)}$ is a matrix that implements the parametrization. We set
	\begin{align*}
		\tilde{\beta}_a(j) & = \begin{cases}
			\beta_a^\dagger(j) - \beta_a^\dagger(i_1) & \tn{if } j \in I_1, \\
			0 & \tn{else,}
			\end{cases}
		&
		B(j,k) & = \begin{cases}
			1 & \tn{if } j \in I_1,\, k=i_1, \\
			1 & \tn{if } j=k \in I_2, \\
			0 & \tn{else.}
			\end{cases}
	\end{align*}
	So the reduced functionals are given by
	\begin{align*}
		\JdiagEff_a(\hat{\beta}) & = \sum_{\substack{j \in I_1,\\ k \in I_1}}
			\exp([-d_a(j,k) - \tilde{\beta}_a(j) + \tilde{\beta}_a(k)]/\veps_a)
			+ \sum_{\substack{j \in I_1,\\ k \in I_2}}
				\exp([-d_a(j,k) - \tilde{\beta}_a(j) -\hat{\beta}(i_1) + \hat{\beta}(k)]/\veps_a) \\
			& \qquad + \sum_{\substack{j \in I_2,\\ k \in I_1}}
				\exp([-d_a(j,k) - \hat{\beta}(j) + \tilde{\beta}_a(k) + \hat{\beta}(i_1)]/\veps_a)
			+ \sum_{\substack{j \in I_2,\\ k \in I_2}}
				\exp([-d_a(j,k) - \hat{\beta}(j) + \hat{\beta}(k)]/\veps_a) \\
		& = \sum_{\substack{j \in \{i_1\} \cup I_2,\\ k \in \{i_1\} \cup I_2}}
			\exp([-D_a(j,k) - \hat{\beta}(j) + \hat{\beta}(k)]/\veps_a)
	\end{align*}
	with the reduced coefficient matrix $D_a \in \R^{(1+|I_2|)^2}$ with entries
	\begin{align*}
		D_a(j,k) & = \begin{cases}
				\softmin_{r \in I_1,\, s \in I_1}
				\left( d_a(r,s) + \tilde{\beta}_a(r) - \tilde{\beta}_a(s), \veps_a \right) & \tn{if } j = i_1,\, k = i_1, \\
				\softmin_{r \in I_1} \left(
				d_a(r,k) + \tilde{\beta}_a(r), \veps_a \right) & \tn{if } j=i_1,\, k\in I_2, \\
				\softmin_{s \in I_1} \left(
				d_a(j,s) - \tilde{\beta}_a(s), \veps_a \right) & \tn{if } j \in I_2,\, k = i_1, \\
				d_a(j,k) & \tn{if } j \in I_2,\, k \in I_2.
			\end{cases}
	\end{align*}
	Consider the reduced variables $\hat{\beta}_{a}^\dagger \in \R^{1 +|I_2|}$ with entries
	\begin{align*}
		\hat{\beta}^{\dagger}_a(j) = \begin{cases}
			\beta_a^\dagger(i_1) & \tn{if } j=i_1, \\
			\beta_a^\dagger(j) & \tn{if } j \in I_2.
			\end{cases}
	\end{align*}
	Then $\beta_a^\dagger = \tilde{\beta}_a + B\,\hat{\beta}_{a}^{\dagger}$ and therefore $\hat{\beta}_{a}^{\dagger}$ are minimizers of $\JdiagEff_a$. Note also that $\hat{\beta}_{2}^{\dagger}(j)-\hat{\beta}_{1}^{\dagger}(j) = \Delta \beta(j)$ for $j \in \{i_1\} \cup I_2$.
	Using the optimality conditions for the reduced functionals and arguing as above, we find
	\begin{align*}
		\Delta \beta(i_1) & \leq \max_{k \in I_2} (W(i_1,k) + \Delta \beta(k)) + \veps_1\,\log \nK
	\end{align*}
	where $W(i_1,k) = \max\{ -\Delta D(i_1,k), \Delta D(k,i_1) \}$ for $k \in I_2$ and $\Delta D = D_2 - D_1$. With \eqref{eqn:Softbounds} we find
	\begin{align*}
		-\Delta D(i_1,k) & \leq \max_{j \in I_1} (-\Delta d(j,k) - \Delta \beta(j) + \Delta \beta(i_1)) + \veps_2\, \log \nK\,, \\
		\Delta D(k,i_1) & \leq \max_{j \in I_1} (\Delta d(k,j) - \Delta \beta(j) + \Delta \beta(i_1)) + \veps_1 \, \log \nK\,
	\end{align*}
	and eventually $W(i_1,k) \leq \max_{j \in I_1} ( w(j,k) - \Delta \beta(j)) + \Delta \beta(i_1) + \max\{ \veps_1, \veps_2 \} \cdot \log \nK$.
	So there is some index $i_3 \in I_2$ such that
	\begin{align*}
		\Delta \beta(i_3) \geq \min_{j \in I_1} \left( -w(j,i_3) + \Delta \beta(j) \right) - 2\veps_1\,\log \nK\,.
	\end{align*}
	The index $i_3$ will be called a child of the minimizing index $j \in I_1$ on the r.h.s.~(or one of the minimizing indices). Then we add $i_3$ to the set $I_1$ and repeat the argument with the reduced functional, to obtain an index $i_4$ and repeat this until $I_1$ contains all indices.
	
	Since we assign every new index $i_k$ that is added to $I_1$ as a child to one parent node in $I_1$, this also constructs a tree graph with root node $i_1$ (finiteness of $d$ and consequently $D$ implies that this graph is connected). For an index $i_k$ let $(i_1,i_2,\ldots,i_k)$ be the unique path from the root to $i_k$. Then
	\begin{align*}
		\Delta \beta(i_k) & \geq -\sum_{j=2}^k w(i_{j-1},i_j) + \Delta \beta(i_1) - 2\,(k-1)\,\veps_1 \, \log \nK 
		\geq - \maxdiam(w)+ \Delta \beta(i_1) - 2\,\veps_1\,\nK\,\log \nK\,.
	\end{align*}
	Since $\Delta \beta(i_1) = \max \Delta \beta$ the result follows.

\subsection%
	[Proof of Theorem \ref*{thm:AuctionEpsStability}]%
	{Proof of Theorem \ref{thm:AuctionEpsStability}}

		Let $\pi_1$, $\pi_2$ be the primal optimizers associated with $(\alpha_1,\beta_1)$ and $(\alpha_2,\beta_2)$ and consider the assignment graph for $\pi_1$ and $\pi_2$ and threshold $1/M$ (see Lemma \ref{lem:AssignmentGraph}).
		Let $\{(X_i,Y_i)\}_{i=1}^\nK$ be the strongly connected components of the assignment graph. By virtue of Lemma \ref{lem:AssignmentGraph}\eqref{item:AssignGraphCycleBalanced}, $\mu(X_i) = \nu(Y_i)$ for $i = 1,\ldots,\nK$.
		Pick some representatives $\{y_i\}_{i=1}^\nK \subset Y$ such that $y_i \in Y_i$ for $i=1,\ldots,\nK$.
		
		For $a=1,2$, let now $\Jdiag_a$ be the reduced effective diagonal functionals, defined in Lemma \ref{lem:EffectiveDiagonalProblem}, corresponding to spaces $(X,Y)$, marginals $(\mu,\nu)$, parameters $\veps_a$, cost $c$, the partitions given by the strongly connected components and the representatives $\{y_i\}_{i=1}^\nK$.
		Let $d_a$ be the corresponding effective coefficients (finite, since $c$ is finite), let $\hat{\beta}^\dagger_{a}$ be two corresponding maximizers and let $\Delta d = d_2 - d_1$, $\Delta \hat{\beta} = \hat{\beta}^\dagger_2 - \hat{\beta}^\dagger_1$.		
		By virtue of Lemma \ref{lem:EffectiveDiagonalProblemStability} one has $\max \Delta \hat{\beta} - \min \Delta \hat{\beta} \leq \maxdiam(w) + 2\,\veps^1\,\nK\,\log \nK$, where $w \in \R^{\nK \times \nK}$ with $w(i,j) = \max\{-\Delta d(i,j),\Delta d(j,j) \}$.

	Now we derive some estimates on $\Delta d$. Consider once more the assignment graph for $\pi_1$, $\pi_2$ and threshold $1/M$.
	For every edge $y \to x$ we have (using \eqref{eqn:EntropyPDRelation})
	\begin{align*}
		\alpha_1(x) + \beta_1(y) - c(x,y) \geq -\veps_1 \, \log M\,.
	\end{align*}
	Moreover, from the marginal conditions we find $\pi_2(x,y) \leq \nu(y)$, which implies
	\begin{align*}
		\alpha_2(x) + \beta_2(y) - c(x,y) \leq \veps_2\,\log M.
	\end{align*}
	Combining the two estimates, we obtain $\Delta \alpha(x) + \Delta \beta(y) \leq (\veps_1 + \veps_2) \, \log M \leq 2\veps_1\,\log M \eqdef L$.
	Similarly, for edges $x \to y$ we obtain $\Delta \alpha(x) + \Delta \beta(y) \geq -(\veps_1 + \veps_2) \, \log M \geq - L$.
	Let now $(y_1,x_1,\ldots,y_k)$ be an alternating path in $(X,Y)$, then, by combining the above inequalities we find $\Delta \beta(y_{j+1}) \geq \Delta \beta(y_j) - 2 \cdot L$ for $j=1,\ldots,k-1$ and eventually
	\begin{align*}
		\Delta \beta(y_{k})-\Delta \beta(y_1) & \geq  - 2 \cdot (k-1) \cdot L\,.
	\end{align*}
	Similarly, for a path $(x_1,y_2,x_2,\ldots,y_k)$ get $\Delta \alpha(x_{1})+\Delta \beta(y_k) \geq  - (2\,k-1) \cdot L$, and for a path $(y_1,x_1,\ldots,y_k,\allowbreak x_k)$ get $\Delta \alpha(x_{k})+\Delta \beta(y_1) \leq  (2\,k-1) \cdot L$.
	
	Consider now a partition cell $(X_i,Y_i)$ and let $y_i \in Y_i$ be the selected `representative', as described above. For every $y \in Y_i$ there is a path to and from $y_i$ with at most $2(|Y_i|-1)$ edges, for every $x \in X_i$ there is a path to and from $y_i$ with at most $2\,|Y_i|-1$ edges. With $\Delta \tilde{\alpha}(x)=\Delta \alpha(x) + \Delta \beta(y_i)$ and $\Delta \tilde{\beta}(y) = \Delta \beta(y) - \Delta \beta(y_i)$ we therefore obtain
	\begin{align*}
		|\Delta \tilde{\alpha}(x)| & \leq (2\,|Y_i|-1) \cdot L, &
		|\Delta \tilde{\beta}(y)| & \leq 2\,(|Y_i|-1) \cdot L.
	\end{align*}
	We recall \eqref{eqn:EffectiveDiagonalProblemCoefs}
	\begin{align*}
		d_a(i,j) & = \softmin_{\substack{x \in X_i,\\y \in Y_j}}
			\left(
				c(x,y) - \tilde{\alpha}_a(x) - \tilde{\beta}_a(y) - \veps_a\,\log(\mu(x)\,\nu(y)),
				\veps_a
			\right)
	\end{align*}
	and get
	\begin{align*}
		\Delta d(i,j) & \leq \max_{\substack{x \in X_i,\\ y \in Y_j}}
			\left(
				-\Delta \tilde{\alpha}(x) - \Delta \tilde{\beta}(y)
				- \Delta \veps \cdot \log\big(\mu(x)\,\nu(y)\big) \right)
			+ \veps_1 \cdot \log\big(|X_i|\,|Y_j|\big) \\
		& \leq 4\,|Y_i|\,L + \veps_1 \cdot \log\big(|X_i|\,|Y_j|\big) \\
		\Delta d(i,j) & \geq \min_{\substack{x \in X_i,\\ y \in Y_j}}
			\left(
				-\Delta \tilde{\alpha}(x) - \Delta \tilde{\beta}(y)
				- \Delta \veps \cdot \log\big(\mu(x)\,\nu(y)\big) \right)
			- \veps_2 \cdot \log\big(|X_i|\,|Y_j|\big) \\
		& \geq -4\,|Y_i|\,L - \veps_2 \cdot \log\big(|X_i|\,|Y_j|\big)
	\end{align*}
	where we used $|\Delta \veps \log(\mu(x)\,\nu(y))| \leq 2\veps_1\,\log M=L$.
	From this follows that $w(i,j) \leq 8\,\max\{|Y_i|,|Y_j|\}\cdot \veps_1\,\log M+2\,\veps_1\,\log N$, which in turn implies that $\maxdiam(w) \leq 16\,\veps_1\,N\,\log M + 2\,\veps_1\,\nK\,\log N$.
	
		Recall that $\Delta \hat{\beta} = \hat{\beta}_{2}^{\dagger} - \hat{\beta}_{1}^{\dagger}$, where $\hat{\beta}_{a}^{\dagger}$, $a=1,2$, are the optimizers of the effective diagonal problems. Then from Lemma \ref{lem:EffectiveDiagonalProblemStability}, and by bounding $\nK \leq N$ we obtain that
		\begin{align*}
			\max \Delta \hat{\beta} - \min \Delta \hat{\beta} \leq \veps_1\,N\,(4 \log N + 16 \log M)
		\end{align*}
		and finally with $\max \Delta \beta - \min \Delta \beta \leq \max \Delta \tilde{\beta} - \min \Delta \tilde{\beta} + \max \Delta \hat{\beta} - \min \Delta \hat{\beta}$ we get
		\begin{align*}
			\max \Delta \beta - \min \Delta \beta & \leq \veps_1\,N\,(4 \log N + 24 \log M),
		\end{align*}
		and analogously we get the equivalent bound for $\Delta \alpha$.

\bibliography{references}{}

\begin{thebibliography}{10}

\bibitem{WassersteinBarycenter}
{\sc M.~Agueh and G.~Carlier}, {\em Barycenters in the {W}asserstein space},
  SIAM J. Math. Anal., 43 (2011), pp.~904--924.

\bibitem{NetworkFlows1993}
{\sc R.~K. Ahuja, T.~L. Magnanti, and J.~B. Orlin.}, {\em Network Flows:
  Theory, Algorithms, and Applications}, Prentice-Hall, Inc., 1993.

\bibitem{AmbrosioGradientFlows2005}
{\sc L.~Ambrosio, N.~Gigli, and G.~Savar\'e}, {\em Gradient Flows in Metric
  Spaces and in the Space of Probability Measures}, Lectures in Mathematics,
  Birkh\"auser Boston, 2005.

\bibitem{ConvexFunctionalAnalysis-11}
{\sc H.~H. Bauschke and P.~L. Combettes}, {\em Convex Analysis and Monotone
  Operator Theory in {H}ilbert Spaces}, CMS Books in Mathematics, Springer,
  1st~ed., 2011.

\bibitem{BenamouBrenier2000}
{\sc J.-D. Benamou and Y.~Brenier}, {\em A computational fluid mechanics
  solution to the {M}onge-{K}antorovich mass transfer problem}, Numerische
  Mathematik, 84 (2000), pp.~375--393.

\bibitem{BenamouIterativeBregman2015}
{\sc J.-D. Benamou, G.~Carlier, M.~Cuturi, L.~Nenna, and G.~Peyr\'e}, {\em
  Iterative {B}regman projections for regularized transportation problems},
  SIAM J. Sci. Comput., 37 (2015), pp.~A1111--A1138,
  \url{https://hal.archives-ouvertes.fr/hal-01096124}.

\bibitem{BenamouCollinoMirebeau_MongeAmpere2014}
{\sc J.-D. Benamou, F.~Collino, and J.-M. Mirebeau}, {\em Monotone and
  consistent discretization of the {Monge--Amp\`ere} operator}.
\newblock arxiv:1409.6694.

\bibitem{ObermanMongeAmpere2014}
{\sc J.-D. Benamou, B.~D. Froese, and A.~M. Oberman}, {\em Numerical solution
  of the optimal transportation problem using the {Monge--Amp\`ere} equation},
  Journal of Computational Physics, 260 (2014), pp.~107--126.

\bibitem{Bertsekas-ParallelAuction1988}
{\sc D.~P. Bertsekas}, {\em The auction algorithm: A distributed relaxation
  method for the assignment problem}, Annals of Operations Research, 14 (1988),
  pp.~105--123.

\bibitem{Bertsekas-MinCostFlowReview1988}
{\sc D.~P. Bertsekas and J.~Eckstein}, {\em Dual coordinate step methods for
  linear network flow problems}, Mathematical Programming, Series B, 42 (1988),
  pp.~203--243.

\bibitem{MonotoneRerrangement-91}
{\sc Y.~Brenier}, {\em {P}olar factorization and monotone rearrangement of
  vector-valued functions}, Comm. Pure Appl. Math., 44 (1991), pp.~375--417.

\bibitem{Carlier-EntropyJKO-2015}
{\sc G.~Carlier, V.~Duval, G.~Peyr\'e, and B.~Schmitzer}, {\em Convergence of
  entropic schemes for optimal transport and gradient flows}, SIAM J. Math.
  Anal., 49 (2017), pp.~1385--1418.

\bibitem{ChizatOTFR2015}
{\sc L.~Chizat, G.~Peyr\'e, B.~Schmitzer, and F.-X. Vialard}, {\em An
  interpolating distance between optimal transport and {Fisher--Rao} metrics},
  Found. Comp. Math.,  (2016).

\bibitem{ChizatEntropicNumeric2018}
{\sc L.~Chizat, G.~Peyr\'e, B.~Schmitzer, and F.-X. Vialard}, {\em Scaling
  algorithms for unbalanced optimal transport problems}, Math. Comp., 87
  (2018), pp.~2563--2609, \url{https://doi.org/10.1090/mcom/3303}.

\bibitem{ChizatDynamicStatic2018}
{\sc L.~Chizat, G.~Peyr\'e, B.~Schmitzer, and F.-X. Vialard}, {\em Unbalanced
  optimal transport: Dynamic and {Kantorovich} formulations}, J. Funct. Anal.,
  27 (2018), pp.~3090--3123, \url{https://doi.org/10.1016/j.jfa.2018.03.008}.

\bibitem{Cominetti-ExpBarrierConvergence-1992}
{\sc R.~Cominetti and J.~San~Martin}, {\em Asymptotic analysis of the
  exponential penalty trajectory in linear programming}, Mathematical
  Programming, 67 (1992), pp.~169--187.

\bibitem{Cuturi2013}
{\sc M.~Cuturi}, {\em {S}inkhorn distances: Lightspeed computation of optimal
  transportation distances}, in Advances in Neural Information Processing
  Systems 26 ({NIPS} 2013), 2013, pp.~2292--2300.

\bibitem{CuturiGroundMetric2014}
{\sc M.~Cuturi and D.~Avis}, {\em Ground metric learning}, Journal of Machine
  Learning Research, 15 (2014), pp.~533--564.

\bibitem{Cuturi14Barycenters}
{\sc M.~Cuturi and A.~Doucet}, {\em Fast computation of {W}asserstein
  barycenters}, in International Conference on Machine Learning, 2014.

\bibitem{Fitschen-ColorTransport-JMIV2016}
{\sc J.~H. Fitschen, F.~Laus, and G.~Steidl}, {\em Transport between {RGB}
  images motivated by dynamic optimal transport}, J. Math. Imaging Vis.,
  (2016).

\bibitem{FranklinLorenz-Scaling-1989}
{\sc J.~Franklin and J.~Lorenz}, {\em On the scaling of multidimensional
  matrices}, Linear Algebra and its Applications, 114--115 (1989),
  pp.~717--735.

\bibitem{GoldbergCostScaling1990}
{\sc A.~V. Goldberg and R.~E. Tarjan}, {\em Finding minimum-cost circulations
  by successive approximation}, Math. Oper. Res., 15 (1990), pp.~430--466.

\bibitem{OptimalTransportWarping}
{\sc S.~Haker, L.~Zhu, A.~Tannenbaum, and S.~Angenent}, {\em Optimal mass
  transport for registration and warping}, Int. J. Comp. Vision, 60 (2004),
  pp.~225--240.

\bibitem{JKO1998}
{\sc R.~Jordan, D.~Kinderlehrer, and F.~Otto}, {\em The variational formulation
  of the {F}okker-{P}lanck equation}, SIAM J. Math. Anal., 29 (1998),
  pp.~1--17.

\bibitem{Knight-SinkhornKnopp}
{\sc P.~A. Knight}, {\em The {Sinkhorn-Knopp} algorithm: Convergence and
  applications}, SIAM. J. Matrix Anal. \& Appl., 30 (2008), pp.~261--275.

\bibitem{KMV-OTFisherRao-2015}
{\sc S.~Kondratyev, L.~Monsaingeon, and D.~Vorotnikov}, {\em A new optimal
  transport distance on the space of finite {R}adon measures}, Adv.
  Differential Equations, 21 (2016), pp.~1117--1164.

\bibitem{YuilleInvisibleHand1994}
{\sc J.~Kosowsky and A.~Yuille}, {\em The invisible hand algorithm: Solving the
  assignment problem with statistical physics}, Neural Networks, 7 (1994),
  pp.~477--490.

\bibitem{KuhnHungarianMethod}
{\sc H.~W. Kuhn}, {\em The {H}ungarian method for the assignment problem},
  Naval Research Logistics, 2 (1955), pp.~83--97.

\bibitem{LeonardSchroedingerMK2012}
{\sc C.~L{\'e}onard}, {\em From the {S}chr\"odinger problem to the
  {M}onge--{K}antorovich problem}, Journal of Functional Analysis, 262 (2012),
  pp.~1879--1920.

\bibitem{LevySemiDiscrete2015}
{\sc B.~L\'evy}, {\em A numerical algorithm for {L2} semi-discrete optimal
  transport in {3D}}, ESAIM Math. Model. Numer. Anal., 49 (2015),
  pp.~1693--1715.

\bibitem{LieroMielkeSavare-HellingerKantorovich-2015a}
{\sc M.~Liero, A.~Mielke, and G.~Savar\'e}, {\em Optimal entropy-transport
  problems and a new {Hellinger--Kantorovich} distance between positive
  measures}.
\newblock arxiv:1508.07941, 2015.

\bibitem{RumpfGeneralizedOT2014}
{\sc J.~Maas, M.~Rumpf, C.~Sch\"onlieb, and S.~Simon}, {\em A generalized model
  for optimal transport of images including dissipation and density
  modulation}, ESAIM Math. Model. Numer. Anal., 49 (2015), pp.~1745--1769.

\bibitem{MandadSurfaceMatching2017}
{\sc M.~Mandad, D.~Cohen-Steiner, L.~Kobbelt, P.~Alliez, and M.~Desbrun}, {\em
  Variance-minimizing transport plans for inter-surface mapping}.
\newblock https://hal.inria.fr/hal-01519006/, 2017.

\bibitem{MultiscaleTransport2011}
{\sc Q.~M{\'e}rigot}, {\em A multiscale approach to optimal transport},
  Computer Graphics Forum, 30 (2011), pp.~1583--1592.

\bibitem{ObermanOptimalTransportationLP2015}
{\sc A.~M. Oberman and Y.~Ruan}, {\em An efficient linear programming method
  for optimal transportation}.
\newblock arxiv:1509.03668, 2015.

\bibitem{PeyreEntropicFlows2015}
{\sc G.~Peyr\'e}, {\em Entropic approximation of {Wasserstein} gradient flows},
  SIAM J. Imaging Sci., 8 (2015), pp.~2323--2351.

\bibitem{RabinPapadakisSSVM2015}
{\sc J.~Rabin and N.~Papadakis}, {\em Convex color image segmentation with
  optimal transport distances}, in Scale Space and Variational Methods ({SSVM}
  2015), 2015, pp.~256--268.

\bibitem{RubnerEMD-IJCV2000}
{\sc Y.~Rubner, C.~Tomasi, and L.~J. Guibas}, {\em The earth mover's distance
  as a metric for image retrieval}, Int. J. Comp. Vision, 40 (2000),
  pp.~99--121.

\bibitem{Santambrogio-OTAM}
{\sc F.~Santambrogio}, {\em Optimal Transport for Applied Mathematicians},
  vol.~87 of Progress in Nonlinear Differential Equations and Their
  Applications, Birkh\"auser Boston, 2015.

\bibitem{SchmitzerShortCuts2015}
{\sc B.~Schmitzer}, {\em A sparse multi-scale algorithm for dense optimal
  transport}, J. Math. Imaging Vis., 56 (2016), pp.~238--259.

\bibitem{SchmitzerSchnoerr-SSVM2013}
{\sc B.~{Sch}mitzer and C.~{Sch}n{\"o}rr}, {\em A hierarchical approach to
  optimal transport}, in Scale Space and Variational Methods ({SSVM} 2013),
  2013, pp.~452--464.

\bibitem{SchmitzerSchnoerr-WassersteinModes2014}
{\sc B.~{Sch}mitzer and C.~{Sch}n{\"o}rr}, {\em Globally optimal joint image
  segmentation and shape matching based on {Wasserstein} modes}, J. Math.
  Imaging Vis., 52 (2015), pp.~436--458,
  \url{https://doi.org/10.1007/s10851-014-0546-8}.

\bibitem{SharifySinkhorn2013}
{\sc M.~Sharify, S.~Gaubert, and L.~Grigori}, {\em Solution of the optimal
  assignment problem by diagonal scaling algorithms}.
\newblock arxiv:1104.3830v2, 2013.

\bibitem{SinkhornKnopp1967}
{\sc R.~D. Sinkhorn and P.~J. Knopp}, {\em Concerning nonnegative matrices and
  doubly stochastic matrices}, Pacific J. Math, 21 (1967), pp.~343--348.

\bibitem{Solomon-siggraph-2015}
{\sc J.~Solomon, F.~de~Goes, G.~Peyr{\'e}, M.~Cuturi, A.~Butscher, A.~Nguyen,
  T.~Du, and L.~Guibas}, {\em Convolutional {Wasserstein} distances: Efficient
  optimal transportation on geometric domains}, ACM Transactions on Graphics
  (Proc. of SIGGRAPH 2015), 34 (2015), pp.~66:1--66:11,
  \url{http://hal.archives-ouvertes.fr/hal-01188953}.

\bibitem{SlepcevTLP2016}
{\sc M.~Thorpe, S.~Park, S.~Kolouri, G.~K. Rohde, and D.~Slep{\v{c}}ev}, {\em A
  transportation {Lp} distance for signal analysis}, J. Math. Imaging Vis.,
  (2017), \url{https://doi.org/10.1007/s10851-017-0726-4}.

\bibitem{Villani-OptimalTransport-09}
{\sc C.~Villani}, {\em Optimal Transport: Old and New}, vol.~338 of Grundlehren
  der mathematischen Wissenschaften, Springer, 2009.

\end{thebibliography}
\bibliographystyle{siamplain}

\end{document}